\newtheorem{theorem}{Theorem}[section]
\newtheorem{definition}{Definition}[section]
\newtheorem{example}{Example}[section]
\newtheorem{lemma}{Lemma}[section]
\newtheorem{proposition}{Proposition}[section]
\newtheorem{remark}{Remark}[section]
\numberwithin{equation}{section}
\newcommand{\RV}{\mathcal{RV}}
\newcommand{\COM}[1]{}
\newcommand{\norm}[1]{\left\lVert #1 \right\rVert}
\newcommand{\pk}[1]{\mathbb{P} \left\{ #1 \right\} }
\newcommand{\EE}{\mathbb{E}}
\newcommand{\MRV}{\mathcal{MRV}}
\newcommand{\vk}[1]{\boldsymbol{#1}}
\def\X{\vk{X}} 
\def\Y{\vk{Y}}
\def\fracl#1#2{\biggr(\frac{#1}{#2} \biggl) }
\def\e{\mathrm{e}} 
\def\R{\mathbb{R}}
\def\RVEC{\mathrm{RVEC}}
\def\RVLC{\mathrm{RVLC}}
\newcommand{\BQN}{\begin{eqnarray}}
\newcommand{\EQN}{\end{eqnarray}}
\newcommand{\BQNY}{\begin{eqnarray*}}
\newcommand{\EQNY}{\end{eqnarray*}}
\definecolor{c20}{RGB}{255,80,0}
\definecolor{c30}{rgb}{0.,0.,1.}
\definecolor{c40}{rgb}{1,0.1,0.7}
\definecolor{c50}{rgb}{1,0,0}
\definecolor{c60}{rgb}{1,0.9,0.1}
\def\cl#1{\textcolor{c50}{#1}}
\def\cl#1{#1}
\def\kai#1{\textcolor{c20}{#1}}
\def\kai#1{#1}
\begin{document}
\title{Tail Asymptotic of Heavy-Tail Risks with Elliptical Copula}

\author{Kai Wang}
\address[Kai Wang and Chengxiu Ling]
 {Wisdom Lake Academy of Pharmacy \\
 Xi'an Jiaotong-Liverpool University\\
 Ren'ai Road 111, Suzhou, China, 215123}
 \address[Kai Wang]
 {Department of Mathematical Sciences \\
University of Liverpool\\
 Liverpool, UK, L693BX}
 
\author{Chengxiu Ling$^*$}
\email[Chengxiu Ling]{Chengxiu.Ling@xjtlu.edu.cn}

\date{\today}
\keywords{Asymptotic tail probability, Elliptical copula, Gumbel max-domain of attraction, Multivariate regular variation}
\subjclass[2020]{Primary: 60G70, 62H05; Secondary: 91G70, 62G32}

\thanks{$^*$Corresponding author. E-mail: Chengxiu.Ling@xjtlu.edu.cn}

\begin{abstract}
We consider a family of multivariate distributions with heavy-tailed margins and the type I elliptical dependence structure. This class of risks is common in finance, insurance, environmental and biostatistic applications. We obtain the asymptotic tail risk probabilities and characterize the multivariate regular variation property. The results demonstrate how the rate of decay of probabilities on tail sets varies in tail sets and the covariance matrix of the elliptical copula.  The theoretical results are well illustrated by typical examples and numerical simulations. A real data application shows its advantages in a more flexible dependence structure to characterize joint insurance losses.
\end{abstract}



\maketitle

\baselineskip17pt


\section{Introduction}

Heavy-tail risks characterize the phenomena where huge losses occur with a relatively high probability, such as record-breaking insurance losses, financial asset returns, transmission rates and durations in data networks \citep{Resnick2007, embrechts2013modelling}. For the joint heavy-tail risk events, the copula allows the modelling of the dependence structure with any choice of marginal risks. While the classical Gaussian copula with log-normal and regularly varying risks have been extensively studied in both theoretical and applied contexts \citep{Asmussen2008,Hua2014,Liu_Yang2021,Das2023heavy}, the elliptical copula, as a generalization of the Gaussian case, facilitates more dependence structures and are widely used in the fields of statistics, finance, insurance, environment and biology \citep{FRAHM2003}. For other applications, we refer to risk analysis \citep{Antoine2018,Yin2021}, portfolio theory \citep{LEI2023}, biostatistics \citep{Lindsey1999}, bioinformatics \citep{Dang2015,HE2019}, and climate models \citep{Louie2014}. \cl{However, the probability law of heavy tail risks with general elliptical dependence structure remain unclear. Its exploration will help the simultaneous risk inference for many application-oriented problems.} 

Recall that for an elliptical random vector $\vk Z=(Z_1, \ldots, Z_d) \sim G$ with zero means, it follows by \cite{Fang1990} that (notation: $\stackrel{d}{=}$ stands for equality in distribution)
\begin{eqnarray}\label{Eq.Elliptical}
   {\vk Z} \stackrel{d}{=} R A{\bf U}, 
\end{eqnarray}
where $R>0$ stands for radius, matrix $A \in \R^{d\times d}$ with $AA^\top =:\Sigma$ which is a positive definite covariance matrix, and ${\bf U}$ is a $d$-dimensional random vector uniformly distributed on the unit sphere $S_d = \{{\bf s}\in \mathbb{R}^d: s_1^2 + \cdots + s_d^2 = 1\}$, and ${\bf U}$ is independent of $R$. Further, suppose all margins $G_1, \ldots, G_d$ are continuous, then the elliptical copula of $\vk Z$ is a function $C:[0,1]^d \to [0,1]$ such that
\begin{eqnarray*}
C_{\Sigma,R}\left(u_1, \ldots, u_d\right)=G\left(G_1^{\leftarrow}\left(u_1\right), \ldots, G_d^{\leftarrow}\left(u_d\right)\right), \quad 0\le u_1,\ldots,u_d\le 1,
\end{eqnarray*}
where $\Sigma$ and $R$ are defined in Eq.\eqref{Eq.Elliptical}, and
\begin{eqnarray*}
G_i^{\leftarrow}\left(u\right):=\inf \left\{x: G_i(x)>u\right\}, \quad i=1, \ldots, d,
\end{eqnarray*}
are the marginal left-inverse quantile functions, as well-known concepts like Value-at-Risk (VaR) in financial and risk management \citep{Embrechts2009}. 
\kai{With heavy-tailed marginal risks, a natural question lies in the effect of an elliptical copula and its parameters on the tail behaviour of joint extreme sets. While the case of elliptical copula ($C_{\Sigma,R}$) with the radius $R$ (see Eq. \eqref{Eq.Elliptical}) belonging to the Fr\'echet max domain of attraction (MDA) is discussed in \cite{HASHORVA20061427, peng2008estimating, Li2009goodness,Kluppelberg2017}, it remains a challenge for the consideration of the radius $R$ in the Gumbel MDA \citep[Theorem 1.6.2]{leadbetter2012extremes}, where}
\begin{eqnarray}\label{Eq: def-Gumbel}
    \frac{\pk{R > u+x/w(u)}}{\pk{R>u}} \to \e^{-x},\quad \forall x\in\R
\end{eqnarray}
for positive scaling function $w(\cdot)$ being a regular varying function with index $-\tau,\,\tau\ge-1$, denoted as $w\in\RV_{-\tau}$, that is  \citep{resnick2008extreme},
\begin{eqnarray*}\lim_{t\to\infty}w(tx)/w(t) = x^\tau, \quad x>0.\end{eqnarray*} 
\cl{We denote by $R\in \mbox{\mbox{GMDA}}(w,\tau)$ in this context, and call such an elliptical copula with the radius $R\in \mbox{GMDA}(w, \tau)$ as type I elliptical copula} \citep{Hashorva2007}. One sub-class of type I elliptical family, the elliptical copulas with the Weibullian-type radius \citep{debicki2011,debicki2018extremes} (defined below in Eq.\eqref{R}) includes well-known copulas like Gaussian ($\beta=-1,\gamma=2$), Laplace (\cl{$\gamma =1$}), multivariate power exponential, and Kotz-type, which are studied with wide applications \citep{Song2010,Rosy2021,Masarotto2017}.  The Weibullian-type $R$ exhibits power-exponential tail behaviour, i.e., for large $u$,
\begin{eqnarray}\label{R}
\pk {R>u} \sim Cu^\beta \exp(-Lu^\gamma),
\end{eqnarray}
where \kai{$C,L,\gamma>0$ and $\beta \in \R$.}  Clearly, $R\in \mbox{GMDA}(w,\gamma-1)$ with $w(u) = \gamma L u^{\gamma-1}$.


In this paper, we focus on the heavy-tailed joint risk with regularly varying margins, and with the dependence structure of elliptical copulas ($C_{\Sigma,R}$) whose radius $R$ is of Weibullian-type. Our contribution lies in the asymptotic probability on the tail set, the rate of decay in the probability of joint tail events, and the multivariate regular variation/hidden regular variation \citep{resnick2002hidden}.

The remainder of the paper is organized as follows. Section \ref{sec: preliminary} presents some preliminaries about the elliptical distribution, the tail set, and certain definitions of multivariate regular variation. The main results of tail asymptotics and multivariate regular variation are presented in Section \ref{sec: main}. Then we provide examples of multivariate power exponential copula, Laplace copula and Gaussian copula in Section \ref{sec: Ex} and conduct simulations in Section \ref{sec: Simulation}. Finally, we discuss the applications in Section \ref{sec: app} and conclude this paper in Section \ref{sec: con}. All the proofs are deferred to Section \ref{sec: proof}.

 \section{Preliminaries}\label{sec: preliminary}
\subsection{Notation}
We shall introduce first some standard notation. Let in the following $I, J$ be two non-empty disjoint index sets such that $I \cup J=\mathbb{I}:=\{1, \ldots, d\}, d \geq 2$, and define for $\vk x=\left(x_1, \ldots, x_d\right)^{\top} \in \R^d$ the subvector with respect to the index set $I$ by $\vk x_I:=\left(x_i, i \in I\right)^{\top} \in \R^d$. If $\Sigma \in \R^{d \times d}$ is a square matrix then the matrix $\Sigma_{I J}$ is obtained by deleting both the rows and the columns of $\Sigma$ with indices in $J$ and in $I$, respectively. Similarly we define $\Sigma_{J I}, \Sigma_{J J}, \Sigma_{I I}$. For notational simplicity we write $\boldsymbol{x}_I^{\top}, \Sigma_{J J}^{-1}$ instead of $\left(\boldsymbol{x}_I\right)^{\top},\left(\Sigma_{J J}\right)^{-1}$, respectively. All vector operations are component-wise (given $\vk a, \vk x, \vk y \in \mathbb{R}^d,c \in \R$), and
$$
\begin{aligned}
\boldsymbol{x} & \ge\boldsymbol{y}, \text { if } x_i\ge y_i, \quad \forall i=1, \ldots, d, \\
\boldsymbol{x}+\boldsymbol{y} & :=\left(x_1+y_1, \ldots, x_d+y_d\right)^{\top}, \\
c \boldsymbol{x} & :=\left(c x_1, \ldots, c x_d\right)^{\top}, \quad \boldsymbol{a x} :=\left(a_1 x_1, \ldots, a_d x_d\right)^{\top}\\
\mathbf{0} & :=(0, \ldots, 0)^{\top} \in \mathbb{R}^d, \quad \mathbf{1}:=(1, \ldots, 1)^{\top} \in \mathbb{R}^d, \\
\vk e^* &:= (0, \ldots,1,\ldots, 0)^{\top}\text{ with only one non-zero entry 1 at the $j$-th coordinate},\\
\left\|\boldsymbol{x}_I\right\|^2 & :=\boldsymbol{x}_I^{\top} \Sigma_{I I}^{-1} \boldsymbol{x}_I, \quad \mathcal{S}^{m-1}:=\left\{\boldsymbol{x} \in \mathbb{R}^m: \boldsymbol{x}^{\top} \boldsymbol{x}=1\right\}, \quad m \geq 1 .
\end{aligned}
$$
Throughout this paper, we write $\vk Z \sim G\in \R^d$  for a $d$-dimensional elliptical random vector and ${\vk X} = (X_1, \ldots, X_d) \sim F$ with the same elliptical copula as $\vk Z$, while the marginal distributions $F_1, \ldots, F_d$ are tail equivalent, $\overline F_{j}(t) \sim c_j\overline F_1(t), c_j>0$, and $\overline F_1 \in \mathcal{RV}_{-\alpha}$. We denote $\vk X \in \RVEC(\alpha, b, \Sigma, R)$ with $b(t):=F_1^{\leftarrow}(1-1/t),\, t>1$ such that $b\in \mathcal{RV}_{1/\alpha}$. \cl{All} the limits are taken as $t \to \infty$ unless stated otherwise. 


\newpage
\subsection{Elliptical copula with Weibullian-type radius}
For the elliptical ${\vk Z} \stackrel{d}{=} R A{\bf U}$ with Weibullian-type radius $R$, all the margins $Z_j$ are also of Weibullian type by Lemma \ref{Lemma: RtoZ}. This allows us to determine the tail set in Section \ref{sec: tail set} knowing the tail of either $Z_j$ or $R$.

\subsection{Tail set, threshold and tail probability}\label{sec: tail set}

For the tail exceedance probabilities, we consider the tail set $A_{\vk x_S}$ with thresholds $\vk x_S$ (cf. \citet[Lemma 1]{Das2023heavy}). For a non-empty index set $S \subseteq \mathbb{I} :=\{1, \ldots, d\}$, 
\begin{eqnarray}\label{def: tail set}
   A_{\vk x_S}:=\left\{y \in \mathbb{R}_{+}^d: y_j>x_j, \forall j \in S\right\},
 \end{eqnarray}
with $\vk x_S=(x_j)_{j \in S} >\mathbf0$. Then $\pk{\vk X \in tA_{\vk x_S}} = \pk{X_j > tx_j, j\in S, X_{j'}>0,\, j'\notin S}$,
illustrates the joint exceedance probability with thresholds $tx_j$ on the interested risks $X_j$, for $j \in S$. 

Note that for a heavy-tailed multivariate risk \cl{$\vk X$} possessing the same elliptical copula as $\vk Z \sim G$, the tail behavior of $\vk X$ can be transformed to that for $\vk Z$ \cl{on an alternative tail set}, determined by the marginal tails of both $\X$ and $\vk Z$: 
\begin{eqnarray}\label{Eq: transform}
    \pk{\vk X \in tA_{\vk x_S}} &=& \pk{X_j > tx_j, j\in S, X_{j'}>0,\, j'\notin S} \notag \\
    &=& \pk{F_j(X_j) > F_j(tx_j), j\in S} \notag\\
    &=& \pk{Z_j > G_1^\leftarrow(F_j(tx_j)), j\in S}
    \notag\\     &=& \pk{Z_j > \overline G_1^\leftarrow(\overline F_j(tx_j)), j\in S},
\end{eqnarray}
where the last step is obtained by
the identically distributed marginals of elliptical $\vk Z$. To show Proposition \ref{Proposition: type I} for the approximation of  Eq.\eqref{Eq: transform} in , we introduce first Lemmas \ref{Lemma: threshold} and \ref{Lemma: quadratic} for the approximation of threshold $z_{x,c_j}(t) := \overline G_1^\leftarrow(c_j\overline F_1(tx))$ and the quadratic programming, respectively.

\begin{lemma} \label{Lemma: threshold}
Let $\vk Z$ be an elliptical random vector defined in Lemma \ref{Lemma: RtoZ} with $\overline G_1 (u) = \pk{Z_1>u} \sim C'u^{\beta'}\exp(-Lu^\gamma)$. Suppose that $\overline F_0(t) = \left(t^\alpha \ell(t) \right)^{-1} \in \RV_{-\alpha}$ with $\alpha>0$. Define $z_{x,c}(t) :=\overline G_{1}^\leftarrow(c\overline F_0 (tx))$ for $c>0$. 
Then as $t \rightarrow \infty$,
\begin{eqnarray*}
z_{x,c}(t) &=&  \left(\frac{\alpha\log t}{L}\right)^{1/\gamma} + \frac{\log \left((c^{-1}x^\alpha C' \alpha^{{\beta'}/\gamma} L^{-{\beta'}/\gamma})^{1/\gamma L}\right) }{ \left( \frac{\alpha \log t}{L} \right)^{1-1/\gamma}} \\
&& + \frac{\log \left( [\ell(t) (\log t)^{{{\beta'}/\gamma}}]^{1/\gamma L} \right)}{ \left( \frac{\alpha \log t}{L} \right)^{1-1/\gamma}} +o \fracl{1}{(\log t)^{1-1/\gamma}}.
\end{eqnarray*}
\end{lemma}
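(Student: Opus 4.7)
The plan is to invert the equation $\overline{G}_1(z_{x,c}(t)) = c\,\overline{F}_0(tx)$ asymptotically by bootstrapping: first identify the leading order of $z_{x,c}(t)$, then insert a correction ansatz and Taylor-expand.

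First I would take logarithms on both sides. Using the Weibullian form $\overline{G}_1(u) \sim C' u^{\beta'}\exp(-Lu^\gamma)$ and the regular variation $\overline{F}_0(tx) = (tx)^{-\alpha}\ell(tx)^{-1}$, the defining identity $\overline{G}_1(z_{x,c}(t)) = c\,\overline{F}_0(tx)$ becomes, up to $o(1)$,
\begin{eqnarray*}
L\,z_{x,c}(t)^\gamma \;-\; \beta'\log z_{x,c}(t) \;-\; \log C' \;=\; \alpha\log t + \alpha\log x + \log\ell(tx) - \log c + o(1).
\end{eqnarray*}
From this, the dominant balance $L z_{x,c}(t)^\gamma \sim \alpha\log t$ yields the zeroth-order approximation $z_0(t):=(\alpha\log t/L)^{1/\gamma}$, and in particular $\log z_{x,c}(t) = \gamma^{-1}\log(\alpha\log t/L)(1+o(1))$.

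Next I would set $z_{x,c}(t) = z_0(t) + \delta(t)$ and use the expansion
\begin{eqnarray*}
L\bigl(z_0(t)+\delta(t)\bigr)^\gamma \;=\; L z_0(t)^\gamma + L\gamma\, z_0(t)^{\gamma-1}\delta(t) + O\!\left(z_0(t)^{\gamma-2}\delta(t)^2\right),
\end{eqnarray*}
valid because $\delta(t)/z_0(t) \to 0$ (which is itself verified a posteriori from the forthcoming formula). Substituting and cancelling the $\alpha\log t$ contributed by $Lz_0(t)^\gamma$, I would solve
\begin{eqnarray*}
L\gamma\, z_0(t)^{\gamma-1}\,\delta(t) \;=\; \log\!\Bigl(c^{-1} x^\alpha C'\, z_0(t)^{\beta'}\,\ell(tx)\Bigr) + o(1),
\end{eqnarray*}
which, using $z_0(t)^{\beta'} = (\alpha\log t/L)^{\beta'/\gamma}$ and the slow variation $\log\ell(tx)=\log\ell(t)+o(1)$, can be split into the two factors $c^{-1}x^\alpha C'\alpha^{\beta'/\gamma}L^{-\beta'/\gamma}$ and $\ell(t)(\log t)^{\beta'/\gamma}$. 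Dividing by $L\gamma z_0(t)^{\gamma-1} = L\gamma(\alpha\log t/L)^{1-1/\gamma}$ and pulling the $1/(\gamma L)$ inside the logarithms gives exactly the displayed expansion, with remainder $o\!\bigl((\log t)^{-(1-1/\gamma)}\bigr)$.

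The main obstacle is bookkeeping of the error terms at the right order: one has to verify that (i) the Taylor remainder $O(z_0^{\gamma-2}\delta^2)$, (ii) the slow-variation correction $\log\ell(tx)-\log\ell(t)$, and (iii) the lower-order discrepancy between $\log z_{x,c}(t)$ and $\log z_0(t)$ are all $o(1)$ after division by $z_0(t)^{\gamma-1}$, so that they collapse into the stated $o((\log t)^{1/\gamma-1})$ remainder. The first is handled by first showing $\delta(t)=O((\log t)^{1/\gamma-1}\log\log t)$ from a rough version of the balance, then re-inserting; the second follows from Karamata's representation of slowly varying functions; the third is immediate since $\log z_{x,c}(t)-\log z_0(t)=\log(1+\delta/z_0)=O(\delta/z_0)\to 0$. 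Once these three checks are done, the claimed asymptotic expansion for $z_{x,c}(t)$ follows.
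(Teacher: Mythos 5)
Your proposal is correct and follows essentially the same route as the paper: take logarithms, identify the dominant balance $L\,z_{x,c}(t)^\gamma \sim \alpha\log t$, and bootstrap a first-order correction that is then split into the constant and slowly-varying pieces. The only presentational difference is that the paper first proves a stand-alone asymptotic expansion of the pure quantile $U(t) = \overline G_1^\leftarrow(t^{-1})$ (Lemma \ref{Lemma: quantile}) and then substitutes the transformed argument $(t c^{-1/\alpha}x)^{\alpha}\ell(tx)$, re-expanding the nested logarithms, whereas you inline the computation and work directly with $z_{x,c}(t)$; the underlying inversion-and-Taylor argument is the same.
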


Next, we introduce the quadratic programming problem $\mathscr{P}\left(\Sigma^{-1}\right)$ in Lemma \ref{Lemma: quadratic}, \cl{which is widely used to investigate the joint tail behavior of elliptical risks, see}  \citet[Proposition 2.1]{Hashorva2005} and \citet[Proposition 2.5 and Corollary 2.7]{Hashorva2002}. 

\begin{lemma}\label{Lemma: quadratic}
 Let $\Sigma \in \R^{d \times d},\,d\ge 2$ be a positive definite correlation matrix. Then the quadratic programming problem:
\begin{eqnarray*}
\mathscr{P}\left(\Sigma^{-1}\right): \min\limits_{\vk x \geq \vk 1} \vk x^\top \Sigma^{-1} \vk x
\end{eqnarray*}
has a unique solution $\vk e^*\in \R^d$ and there exists a unique non-empty index set $I \subset \{1,\ldots,d\}$ with $J= \{1,\ldots,d\}\backslash I$, so that 
\begin{eqnarray*}
\lambda:=\lambda(\Sigma)=\min_{x \ge \vk{1}}\vk x^\top \Sigma^{-1} \vk x={\vk{e}^*}^\top \Sigma^{-1} \vk{e}^*  ={\vk{e}_I}^\top\Sigma_{II}^{-1} \vk{e}_I\ge1,
\end{eqnarray*}
and if $\Sigma^{-1}\vk 1 \ge \vk 0$, then $\vk e^*=\vk 1$, otherwise the solution is given by
\begin{eqnarray*}
\vk e_I^*=\vk 1_I \text { and } \vk e_J^*  =\Sigma_{J I}\left(\Sigma_{II}\right)^{-1} \vk 1_I\geqslant \vk 1_J.
\end{eqnarray*}
Further for any vector $\vk c \in \R^d$ we have
\begin{eqnarray*}
\vk c^\top \Sigma^{-1} \vk e^*={\vk c_I}^\top\Sigma_{II}^{-1} \vk 1_I.
\end{eqnarray*}
\end{lemma}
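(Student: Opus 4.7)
The plan is to recognize $\mathscr{P}(\Sigma^{-1})$ as a strictly convex quadratic program and to extract the claimed structure from its Karush--Kuhn--Tucker (KKT) optimality system. Since $\Sigma^{-1}$ is positive definite, the objective $\vk x^\top \Sigma^{-1}\vk x$ is strictly convex and coercive, while the feasible set $\{\vk x\ge \vk 1\}$ is closed and convex, so a unique minimizer $\vk e^*$ exists. Attaching a nonnegative multiplier $\vk \mu$ to the constraint $\vk x-\vk 1\ge \vk 0$, the KKT conditions read $2\Sigma^{-1}\vk e^*=\vk\mu$ together with the complementary slackness $\mu_i(e^*_i-1)=0$ for each $i$. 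The case $\Sigma^{-1}\vk 1\ge \vk 0$ is then immediate: the pair $\vk x=\vk 1$, $\vk\mu=2\Sigma^{-1}\vk 1$ satisfies every KKT condition, so $\vk e^*=\vk 1$.

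Otherwise I would introduce $I:=\{i:\mu_i>0\}$ and $J:=\mathbb{I}\setminus I$, which must be strictly nonempty under the sign hypothesis on $\Sigma^{-1}\vk 1$. Complementary slackness gives $\vk e^*_I=\vk 1_I$, while the $J$-block of stationarity yields $(\Sigma^{-1}\vk e^*)_J=\vk 0$. Substituting the standard Schur-complement identities $(\Sigma^{-1})_{JJ}=(\Sigma_{JJ}-\Sigma_{JI}\Sigma_{II}^{-1}\Sigma_{IJ})^{-1}$ and $(\Sigma^{-1})_{JI}=-(\Sigma^{-1})_{JJ}\Sigma_{JI}\Sigma_{II}^{-1}$ into the $J$-block equation solves it as $\vk e^*_J=\Sigma_{JI}\Sigma_{II}^{-1}\vk 1_I$, and the feasibility $\vk e^*_J\ge \vk 1_J$ is simply the residual primal constraint. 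Uniqueness of $(I,J)$ is inherited from uniqueness of $\vk e^*$, since $I$ can be characterised as the support of the nonnegative vector $\Sigma^{-1}\vk e^*$.

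For the final identity I would compute $\Sigma^{-1}\vk e^*$ block-wise: the $J$-block is $\vk 0$ by stationarity, while in the $I$-block the formulas $(\Sigma^{-1})_{II}=(\Sigma_{II}-\Sigma_{IJ}\Sigma_{JJ}^{-1}\Sigma_{JI})^{-1}$ and $(\Sigma^{-1})_{IJ}=-(\Sigma^{-1})_{II}\Sigma_{IJ}\Sigma_{JJ}^{-1}$ combine so that the Schur complement factors cancel, leaving $\Sigma_{II}^{-1}\vk 1_I$. Therefore $\vk c^\top\Sigma^{-1}\vk e^*=\vk c_I^\top\Sigma_{II}^{-1}\vk 1_I$ for every $\vk c\in\R^d$, and the choice $\vk c=\vk e^*$ returns $\lambda=\vk 1_I^\top\Sigma_{II}^{-1}\vk 1_I$. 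Finally, because $\Sigma_{II}$ is a positive definite correlation submatrix of size $|I|$ whose trace equals $|I|$, its largest eigenvalue is at most $|I|$, so every eigenvalue of $\Sigma_{II}^{-1}$ is at least $|I|^{-1}$, and hence $\vk 1_I^\top\Sigma_{II}^{-1}\vk 1_I\ge |I|^{-1}\|\vk 1_I\|^2=1$.

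I expect the main obstacle to be the block-matrix cancellation in the previous paragraph: the naive identification $(\Sigma^{-1})_{II}=\Sigma_{II}^{-1}$ is false in general, and the clean conclusion $(\Sigma^{-1}\vk e^*)_I=\Sigma_{II}^{-1}\vk 1_I$ only emerges after carrying both Schur-complement identities through in tandem so that the inner factors appearing in $(\Sigma^{-1})_{II}\vk 1_I$ and $(\Sigma^{-1})_{IJ}\vk e^*_J$ exactly annihilate each other. Once that algebraic collapse is in place, the remaining assertions of the lemma are direct readouts of the KKT structure.
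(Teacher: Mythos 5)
The paper does not prove this lemma itself; it is imported as a citation to \citet[Proposition 2.1]{Hashorva2005} and \citet[Proposition 2.5, Corollary 2.7]{Hashorva2002}. Your proof is therefore a self-contained replacement rather than a reproduction, and the KKT route you take is sound: strict convexity and coercivity give existence and uniqueness of $\vk e^*$; the stationarity relation $2\Sigma^{-1}\vk e^*=\vk\mu\ge\vk 0$ together with complementary slackness correctly handles both the case $\Sigma^{-1}\vk 1\ge\vk 0$ and the general case; and the two Schur-complement identities do cancel exactly to yield $(\Sigma^{-1}\vk e^*)_I=\Sigma_{II}^{-1}\vk 1_I$ and $(\Sigma^{-1}\vk e^*)_J=\vk 0$, from which the trace identity $\vk c^\top\Sigma^{-1}\vk e^*=\vk c_I^\top\Sigma_{II}^{-1}\vk 1_I$ and $\lambda=\vk 1_I^\top\Sigma_{II}^{-1}\vk 1_I$ follow. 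The eigenvalue argument for $\lambda\ge 1$ (trace of $\Sigma_{II}$ equals $|I|$, hence $\lambda_{\max}(\Sigma_{II})\le|I|$, hence $\vk 1_I^\top\Sigma_{II}^{-1}\vk 1_I\ge|I|/|I|=1$) is correct and is a nice, elementary way to get the bound.

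Two small points you should tighten. First, $I$ is nonempty in \emph{every} case, not just under the sign hypothesis: if $I=\emptyset$ then $\vk\mu=\vk 0$, stationarity forces $\Sigma^{-1}\vk e^*=\vk 0$, hence $\vk e^*=\vk 0$, contradicting feasibility $\vk e^*\ge\vk 1$; what the sign hypothesis $\Sigma^{-1}\vk 1\not\ge\vk 0$ actually buys you is that $J\ne\emptyset$ (otherwise $\vk e^*=\vk 1$ would give $\Sigma^{-1}\vk 1=\vk\mu/2\ge\vk 0$, a contradiction). Second, your definition $I=\{i:\mu_i>0\}$ relies on the multiplier being unique; this is fine here because the active-constraint gradients $\{-\vk e_i\}$ are linearly independent (LICQ holds), but it is worth saying explicitly, since it is exactly what makes your characterization of $I$ as the support of $\Sigma^{-1}\vk e^*$ well-posed and hence the claimed uniqueness of $(I,J)$ legitimate. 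With those two remarks inserted, the argument is complete and arguably more transparent than leaning on the external reference.
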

\cl{Now, we are ready to derive in Proposition \ref{Proposition: type I}} the approximations of Eq.\eqref{Eq: transform}, i.e.,  the joint tail behavior of $\vk Z$ with Weibullian-type radius \cl{exceeding} the thresholds $z_{x, c_j}(t)$'s.

\begin{proposition}\label{Proposition: type I}
Let $\vk Z$ be an elliptical random vector defined in Lemma \ref{Lemma: RtoZ} 
and $\vk e^*, \lambda, I, J$ be defined w.r.t. $\Sigma=A^\top A$ in $\mathscr{P}\left(\Sigma^{-1}\right)$ in Lemma \ref{Lemma: quadratic}.  Let $\Y$ be a Gaussian random vector in $\R^d$ with covariance
matrix $\Sigma$, $\vk z \in \R^d$. For two measurable functions $\mathcal{L}, u:(0, \infty) \rightarrow(0, \infty)$ satisfying $u(t) \to \infty$ and $\log \mathcal{L}(t) / (u(t))^{\gamma-1} \to 0$ as $t \rightarrow \infty$, we have

\begin{eqnarray}
&&\pk{\vk Z > u(t) \vk{1} + \frac{\vk {z}}{(u(t))^{\gamma-1}} + \frac{\log \mathcal{L}(t)}{(u(t))^{\gamma-1}}\vk{1} +o\fracl{1}{(u(t))^{\gamma-1}}}
=:\pk{\vk Z > u(t) \vk{1} + \vk x^{(t)}} \notag\\
&&\sim \Upsilon \frac{(\lambda u^2(t)+ 2u(t) \vk x^{(t)\top} \Sigma^{-1} \vk e^*)^{\beta_{\gamma,I,J}} }{(u(t))^{|I|}}\exp\left(-L(\lambda u^2(t)+ 2u(t) \vk x^{(t)\top} \Sigma^{-1} \vk e^*)^{\gamma/2}\right)\notag.
\end{eqnarray}
where 
\begin{eqnarray*}
\Upsilon &:=& \Upsilon (\Sigma) = C{(\gamma L)^{1+|J|/2-d}}\frac{\Gamma(d/2) 2^{d/2-1} \pk{\vk Y_J> \widetilde{\vk{u}}_J | \vk Y_I = \vk 0_I}}{(2\pi)^{|I|/2} |\Sigma_{II}|^{1/2} \prod_{i \in I} h_i },\\
h_i&:=& h_i (\Sigma) =\vk {1}_I^{\top}\Sigma_{II}^{-1} \vk e_i, \\
\lambda &:=& \lambda (\Sigma)= \vk 1_I^\top \Sigma_{II}^{-1} \vk 1_I = \min\limits_{\vk x \ge \vk 1}\vk x^\top \Sigma^{-1} \vk x,\\
\beta_{\gamma,I,J} &:=& \beta_{\gamma,I,J} (\Sigma)=\frac{\beta+|I| + \gamma(1+|J|/2-d)}{2}, \ |I|+|J| = d,\\
\widetilde{\vk{u}}_J &:=& \widetilde{\vk{u}}_J (\Sigma) = \lim_{t \to \infty} u(t) \left(\vk{1}_J-\Sigma_{JI}\Sigma^{-1}_{II} \vk{1}_I \right),
\end{eqnarray*}
and set $\pk{\vk Y_J> \widetilde{\vk{u}}_J | \vk Y_I = \vk 0_I} = 1$ if $|I|=d$. The elements in $\widetilde{\vk{u}}_J$ take values in $\{-\infty,0\}$, and $\Sigma_{J I}\left(\Sigma_{II}\right)^{-1} \vk 1_I = \vk e_J^*  \geq \vk 1_J$.
\end{proposition}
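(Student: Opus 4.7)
The plan is to express the probability as a $d$-dimensional integral of the elliptical density and carry out a Laplace/Mills-ratio analysis in the style of \citet{Hashorva2005, Hashorva2002}. Since $\vk Z \equaldis R A \vk U$ with Weibullian-type $R$, the density of $\vk Z$ takes the form $f_{\vk Z}(\vk z) = |\Sigma|^{-1/2} g(\vk z^\top \Sigma^{-1} \vk z)$ with generator $g(s) \sim K s^{(\beta + \gamma - d)/2} \exp(-L s^{\gamma/2})$, where $K = C L \gamma\, \Gamma(d/2)/(2\pi^{d/2})$ is read off from $\pk{R>r}\sim C r^\beta\exp(-Lr^\gamma)$ via the surface-measure identity $f_R(r)=2\pi^{d/2} r^{d-1} g(r^2)/\Gamma(d/2)$. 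Writing $A(t) := \{\vk z \in \R^d : \vk z > u(t)\vk 1 + \vk x^{(t)}\}$, the target quantity becomes $\int_{A(t)} f_{\vk Z}(\vk z)\, \td \vk z$.

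The next step is to locate the saddle point of $\vk z^\top \Sigma^{-1} \vk z$ on $\overline{A(t)}$. Because $u(t)^{-1} A(t) \to \{\vk y \ge \vk 1\}$, Lemma~\ref{Lemma: quadratic} identifies the unperturbed minimizer $u(t) \vk e^*$ with value $\lambda u^2(t)$. Adding the perturbation yields $\vk z^*(t) = u(t)\vk e^* + \vk x^*(t)$ with $\vk x^*_I(t) = \vk x^{(t)}_I$ and $\vk x^*_J(t) = \Sigma_{JI}\Sigma_{II}^{-1}\vk x^{(t)}_I$, which remains strictly feasible in the $J$-block thanks to $\vk e^*_J \ge \vk 1_J$. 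Combining the identity $\vk c^\top \Sigma^{-1}\vk e^* = \vk c_I^\top \Sigma_{II}^{-1}\vk 1_I$ of Lemma~\ref{Lemma: quadratic} with the standard block-inverse formulas shows that $\Sigma^{-1}\vk z^*(t)$ vanishes on $J$ and equals $\Sigma_{II}^{-1}\vk z^*_I(t)$ on $I$, hence
\begin{eqnarray*}
s_0(t) := \vk z^*(t)^\top \Sigma^{-1} \vk z^*(t) = \lambda u^2(t) + 2 u(t)\, \vk x^{(t)\top} \Sigma^{-1} \vk e^* + o\bigl(u(t)^{2-2\gamma}\bigr).
\end{eqnarray*}

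I then change variables $\vk z = \vk z^*(t) + \vk w$ and Taylor expand using $(s_0+\delta)^{\gamma/2} = s_0^{\gamma/2} + (\gamma/2) s_0^{\gamma/2-1}\delta + O(s_0^{\gamma/2-2}\delta^2)$ with $\delta = 2\vk w^\top \Sigma^{-1}\vk z^*(t) + \vk w^\top \Sigma^{-1}\vk w$. Because $\Sigma^{-1}\vk z^*(t)$ equals $u(t)\vk h$ on $I$ (with $h_i=(\Sigma_{II}^{-1}\vk 1_I)_i$) and vanishes on $J$, the linear term in $\vk w$ is purely in $\vk w_I$ and forces $\vk w_I$ to the scale $u(t)^{1-\gamma}$, whereas the Gaussian-type $\vk w_J$ quadratic lives on the scale $u(t)^{1-\gamma/2}$; the cross-coupling $\vk w_I^\top \Sigma^{IJ} \vk w_J$ is of lower order and can be neglected. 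The $\vk w_I$-integral over $\{\vk w_I > \vk 0\}$ is then a product of elementary exponentials giving $\prod_{i\in I} \bigl(L\gamma s_0^{\gamma/2-1} u(t) h_i\bigr)^{-1}$. For the $\vk w_J$-integral the quadratic form is $-L(\gamma/2) s_0^{\gamma/2-1}\, \vk w_J^\top \Sigma^{JJ}\vk w_J$, and after rescaling $\vk v_J = \sqrt{L\gamma s_0^{\gamma/2-1}}\, \vk w_J$ the feasibility constraint becomes $\vk v_J > \widetilde{\vk u}_J + o(1)$, with $-\infty$ on coordinates where $\vk e^*_J > \vk 1_J$ strictly and $0$ where equality holds. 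The resulting truncated Gaussian integral with inverse covariance $\Sigma^{JJ}$ equals $(2\pi)^{|J|/2}\,|\Sigma^{JJ}|^{-1/2}\, \pk{\vk Y_J > \widetilde{\vk u}_J \mid \vk Y_I = \vk 0_I}$, since $\vk Y_J \mid \vk Y_I=\vk 0_I$ is centred Gaussian with covariance $(\Sigma^{JJ})^{-1}$ (the Schur complement).

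Assembling the prefactor $|\Sigma|^{-1/2} K s_0^{(\beta+\gamma-d)/2}\exp(-L s_0^{\gamma/2})$ with the two integrals, using $|\Sigma|^{-1/2}|\Sigma^{JJ}|^{-1/2} = |\Sigma_{II}|^{-1/2}$ (from $|\Sigma|=|\Sigma_{II}|\cdot|(\Sigma^{JJ})^{-1}|$), and collecting the powers of $s_0$, $u(t)$ and $L\gamma$ produces exactly $\Upsilon(\Sigma)\, s_0(t)^{\beta_{\gamma, I, J}}\, u(t)^{-|I|}\exp(-L s_0(t)^{\gamma/2})$, with the book-keeping $(\beta+\gamma-d)/2 - (\gamma/2-1)(|I|+|J|/2) = \beta_{\gamma,I,J}$ and $(L\gamma)^{1-|I|-|J|/2} = (\gamma L)^{1+|J|/2-d}$. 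The main obstacle I expect is justifying the Laplace approximation with uniform error control: one must show that contributions from $\vk z$ outside a suitably shrinking neighbourhood of $\vk z^*(t)$ are negligible, and that the Taylor remainder from $(s_0+\delta)^{\gamma/2}$ is of smaller order on the effective integration domain. This is where the assumptions $u(t) \to \infty$ and $\log \mathcal{L}(t)/u(t)^{\gamma-1} \to 0$ are essential: they ensure that $\vk x^{(t)}$ is negligible on the saddle-point fluctuation scale, so all higher-order corrections drop out.
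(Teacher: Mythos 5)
Your proposal takes a genuinely different route from the paper. The paper proves Proposition~\ref{Proposition: type I} in two short steps: first it establishes Lemma~\ref{Lemma: type I} as a direct corollary of \citet[Theorem~3.1]{Hashorva2007} applied to the threshold sequence $t\vk 1+\vk c$ (reading off $\alpha_t^2=\norm{(t\vk1+\vk c)_I}^2$, $\beta_t=w(\alpha_t)$ and $\widetilde{\vk u}'_J$), and then it upgrades $\vk c$ to the $o(1)$ perturbation $\vk x^{(t)}$ by inspection of the proof of that external theorem. You instead re-derive the whole asymptotic from first principles by a saddle-point/Laplace analysis of $\int_{A(t)}f_{\vk Z}$: you identify $\vk z^*(t)=u(t)\vk e^*+\vk x^*(t)$ as the perturbed minimizer via Lemma~\ref{Lemma: quadratic} and the block-inverse identity $\Sigma^{-1}\vk z^*(t)|_J=\vk 0$, split $\vk w=\vk w_I\oplus\vk w_J$ into the exponential and Gaussian scales, and integrate. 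I checked your book-keeping in detail: the power of $s_0$ collapses to $\beta_{\gamma,I,J}$, the powers of $\gamma L$ give $(\gamma L)^{1+|J|/2-d}$, the factors of $2,\pi$ together with $K=CL\gamma\Gamma(d/2)/(2\pi^{d/2})$ reproduce $\Gamma(d/2)2^{d/2-1}(2\pi)^{-|I|/2}$, and $|\Sigma|^{-1/2}|\Sigma^{JJ}|^{-1/2}=|\Sigma_{II}|^{-1/2}$ via the Schur-complement determinant formula. So the computation is sound. What this route buys is a self-contained proof that makes the source of every constant transparent, at the cost of having to reprove the uniformity of the Laplace approximation that Hashorva already packaged.

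There are two gaps you should be aware of. First, your argument assumes $\vk Z$ has a density with generator $g(s)\sim K s^{(\beta+\gamma-d)/2}\exp(-Ls^{\gamma/2})$, which you obtain by \emph{differentiating} $\pk{R>r}\sim Cr^\beta\exp(-Lr^\gamma)$. Differentiating an asymptotic relation is not licit in general: the tail asymptotics of $R$ do not determine the density asymptotics of $R$ (or even its existence) without an extra monotone-density or regularity hypothesis. The paper's route via \cite{Hashorva2007} operates entirely at the level of distribution functions in the Gumbel MDA framework and so never needs this. Second, as you yourself flag, the Laplace approximation requires showing that the contribution away from $\vk z^*(t)$ is negligible and that the Taylor remainder of $(s_0+\delta)^{\gamma/2}$ is subdominant on the effective integration scale. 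Your sketch gets the scales right (cross terms $\vk w_I^\top\Sigma^{IJ}\vk w_J$ are $O(u^{-\gamma/2})$ in the exponent, etc.), but writing this out with uniform error bounds is the bulk of the work; in the paper it is delegated to the cited theorem. Also a small misstatement: you wrote $s_0(t)=\lambda u^2(t)+2u(t)\vk x^{(t)\top}\Sigma^{-1}\vk e^*+o(u(t)^{2-2\gamma})$, but the neglected term $\vk x^{(t)\top}_I\Sigma_{II}^{-1}\vk x^{(t)}_I$ is $O(u(t)^{2-2\gamma})$, not $o$; what you actually need (and what holds) is that this term times $s_0^{\gamma/2-1}$ tends to zero, i.e.\ $O(u(t)^{-\gamma})\to 0$, so the final $\sim$ is unaffected.
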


\begin{remark}
$(a)$ \kai{This proposition is a modified version of \citet[Theorem 3.1]{Hashorva2007} for Weibullian-type radius $\pk {R>u} \sim Cu^\beta \exp(-Lu^\gamma)$ and the threshold $u(t) \vk{1} + \vk x^{(t)}$. The function $u(t)$ refers to the dominant (first) term in $z_{x, c}(t)$, while $\vk x^{(t)}$ constitutes the remaining terms that behave as constants.}\\
$(b)$ For the multivariate Gaussian random vector $\vk Z$, the result of Proposition \ref{Proposition: type I} coincides with \citet[Proposistion 1]{Das2023heavy}.

\end{remark}

\subsection{Regular variation}
Besides the asymptotic probability on the tail set, we will also characterize the multivariate regular variation property (see Theorem \ref{Thm: MRV}). As all type I elliptical copulas admit asymptotic (tail) independence \citep{FRAHM2003}, we consider the hidden regular variation with extensions to the sub-cones $\EE_d^{(i)}$'s below (cf.  \cite{resnick2002hidden,resnick2008extreme,Das2023heavy}).
\begin{eqnarray*}\EE_d^{(i)} :=\{\vk{y}\in\R_+^d: y_{(i)} >0\}, \  1\le i \le d,\end{eqnarray*} 
where $y_{(1)}\ge y_{(2)}\ge\cdots\ge y_{(i)}\ge \cdots \ge y_{(d)}$ are the associated decreasing ordered statistic of $y_1,\ldots, y_d$. The sub-cone $\EE^{(1)}_d = [0, \infty]^d \backslash \{\vk 0\}$, $\EE^{(2)}_d = \EE^{(1)}_d \backslash \bigcup\limits_{i=1}^{d} \mathbb{L}_{i}$, where $\mathbb{L}_{i}:= \{t\vk e_i, \ t>0\}, 1\le i \le d$, are the axes originating at $\{\vk 0\}$, and 
\begin{eqnarray*}\EE_d^{(d)} \subset \cdots \subset \EE_d^{(2)} \subset \EE_d^{(1)}.\end{eqnarray*}
\begin{definition}[Multivariate regular variation]
Suppose $\vk Y=(Y_1, \ldots, Y_d)$ is a random vector on $[0, \infty)^d$. The distribution of $\vk Y$ is multivariate regularly varying with tail measure $\nu(\cdot)$, if there exists a function $b(t) \to \infty$ as $t \to \infty$ and a non-negative Radon measure $\nu \ne 0$ such that
\begin{eqnarray}\label{MRV}
    t\pk{\frac{\vk Y}{b(t)}\in \cdot} \stackrel{v}\to \nu(\cdot)
\end{eqnarray}
on the cone $\EE^{(1)}_d = [0, \infty]^d \backslash \{\vk 0\}$. Here $\stackrel{v}\to$ stands for vague convergence of measures. 
\\
The above condition implies there exists a constant $\alpha > 0$ such that $b(\cdot) \in \RV_{1/\alpha}$ and the measure $\nu(\cdot)$ is $(-\alpha)$-homogeneous, i.e., for $B \subset \EE^{(1)}_d$
\begin{eqnarray}\label{MRV_nu}
\nu (cB)= c^{-\alpha} \nu(B), \quad c>0.
\end{eqnarray}
We abbreviate the MRV property as $\vk Y \in \MRV(\alpha,b,\nu,\EE_d^{(1)})$.
\end{definition}

The asymptotic independence means the probability of two or more components being simultaneously large with suitable scales $b(\cdot)$ is negligible compared with the probability of one component being large. This also indicates the Radon measure $\nu(\cdot)$ concentrates on the axes.
\cite{resnick2002hidden} considered the fine structures presented in the interior of the cone with relatively crude normalization $b^*(t)$, which is called the hidden regular variation.

\begin{definition}[Hidden regular variation]
The random vector $\vk Y=(Y_1, \ldots, Y_d)$ has a distribution possessing hidden regular variation if, in addition to Eq.\eqref{MRV}, there exists a non-decreasing function $b^*(t) \to \infty$ such that $b(t)/b^*(t) \to \infty$ and there exists a non-negative Radon measure $\nu^* \ne 0$ on $\EE_d^{(2)}$ such that
\begin{eqnarray*}
t\pk{\frac{\vk Y}{b^*(t)}\in \cdot} \stackrel{v}\to \nu^*(\cdot)\end{eqnarray*}
on the cone $\EE_d^{(2)}$. Then there exists $\alpha^* \ge \alpha$ such that $b^* \in \RV_{1/\alpha^*}$, and $\nu^*, \alpha^*$ satisfying the analog of Eq.\eqref{MRV_nu}.
\end{definition}

For the intrinsic dependence structure in the sub-cones $\mathbb{E}_d^{(i)}, i=1,\ldots,d$, we borrow the definition of the multivariate regular variation in \citet[Defninition 3]{Das2023heavy}, followed by a lemma (cf. \citet[Lemma 1]{Das2023heavy})  to characterize such properties with tail set $A_{\vk x_S}$.

\begin{definition}[$\MRV$ on $\EE_d^{(i)}, 1\le i \le d$]
A random vector $\vk Y \in \R^d$ is multivariate regularly varying on $\EE_d^{(i)}$ if there exists a regularly varying function $b_i \in \RV_{1 / \alpha_i}, \alpha_i>0$, and a non-null (Borel) measure $\nu_i$ which is finite on Borel sets bounded away from $\left\{y \in \R_{+}^d: y_{(i)}=0\right\}$ such that
\begin{eqnarray*}
\lim _{t \rightarrow \infty} t \pk{\frac{\vk Y}{b_i(t)} \in A}=\nu_i(A)
\end{eqnarray*}
for all Borel sets $A \in \mathcal{B}\left(\EE_d^{(i)}\right)$ which are bounded away from $\left\{y \in \mathbb{R}_{+}^d: y_{(i)}=0\right\}$ with $\nu_i(\partial A)=0$. We write $\vk Y \in \MRV(\alpha_i, b_i, \mu_i, \mathbb{E}_d^{(i)})$. The limit measure $\nu_i$ obtained is homogeneous of order $-\alpha_i$, i.e., $\nu_i(c A)=c^{-\alpha_i} \nu_i(A)$ for any $c>0$.
\end{definition}

\begin{lemma}[{\citet[Lemma 1]{Das2023heavy}}]\label{Lemma: cone}
    Let $\vk Y$ be a random vector in $\R^d$. Suppose that $b_i \in \RV_{1/\alpha_i}$ for $\alpha_i>0$ is a measurable function and $\nu_i$ is a non-null (Borel) measure on $\mathcal{B}(\EE_d^{(i)})$ which is finite on Borel sets bounded away from $\{\vk y \in \R^d_+: y_{(i)}=0 \}$. Then $\vk Y \in \MRV(\alpha_i,b_i,\nu_i,\EE_d^{(i)})$ if and only if
\begin{eqnarray*}
\lim\limits_{t \to \infty} t\pk{\frac{\vk Y}{b_i(t)}\in A_{\vk x_S}}= \nu_i(A_{\vk x_S})
\end{eqnarray*}
for all sets $S \subset \mathbb{I}$ with $|S|\ge i$, for all $x_j > 0, \forall j \in S$ such that $\vk x_S = (x_j)_{j \in S}$ and
\begin{eqnarray*}
A_{\vk x_S} = \{\vk y \in \R^d_+: y_j > x_j, \forall j \in S \}
\end{eqnarray*}
with $\nu_i\left(\partial A_{\vk x_S}\right)=0$.
\end{lemma}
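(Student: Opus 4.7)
The forward implication is immediate from the definition of MRV on the cone $\EE_d^{(i)}$. I would only need to verify that each $A_{\vk x_S}$ with $|S| \ge i$ and $\vk x_S > \vk 0$ is a relatively compact subset of $\EE_d^{(i)}$, i.e., bounded away from $\{\vk y \in \R_+^d : y_{(i)} = 0\}$. Indeed, any $\vk y \in A_{\vk x_S}$ has at least $|S| \ge i$ coordinates exceeding $m := \min_{j \in S} x_j > 0$, forcing $y_{(i)} \ge m$. Together with the hypothesis $\nu_i(\partial A_{\vk x_S}) = 0$, the definition of MRV on $\EE_d^{(i)}$ then delivers the stated limit directly.

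For the converse, the plan is to run a convergence-determining-class argument for vague convergence of Radon measures. Let
\[\mathcal{A} := \{A_{\vk x_S} : S \subseteq \mathbb{I},\ |S| \ge i,\ \vk x_S > \vk 0\}.\]
This family is a $\pi$-system: the intersection $A_{\vk x_S} \cap A_{\vk x'_{S'}}$ equals $A_{\vk x''_{S\cup S'}}$, with coordinate-wise maxima taken on overlapping indices and $|S \cup S'| \ge i$. By inclusion--exclusion, the assumed convergence extends to the algebra generated by $\mathcal{A}$, provided the boundaries are $\nu_i$-null---achievable by perturbing thresholds, since the set of discontinuity levels of the marginals of $\nu_i$ is at most countable. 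Next, every relatively compact open set $O \subseteq \EE_d^{(i)}$ can be exhausted as a countable increasing union of finite unions from $\mathcal{A}$: since $O$ is bounded away from $\{y_{(i)}=0\}$, each $\vk y \in O$ admits a neighborhood of the form $A_{\vk x_S} \subseteq O$ with $|S| \ge i$, and Lindel\"of extracts a countable sub-cover. A Portmanteau-type theorem for vague convergence on locally compact Hausdorff spaces (see, e.g., \cite{resnick2008extreme}) then propagates the convergence from $\mathcal{A}$ to all Borel sets $A$ bounded away from $\{y_{(i)}=0\}$ with $\nu_i(\partial A) = 0$, which is the MRV condition on $\EE_d^{(i)}$.

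The main obstacle I foresee is the topological bookkeeping on the non-standard space $\EE_d^{(i)}$: one must verify that the $\pi$-system $\mathcal{A}$ is rich enough to sandwich every $\nu_i$-continuity relatively compact Borel set from above and below up to arbitrarily small $\nu_i$-measure, and that the threshold perturbations avoiding $\nu_i$-discontinuity levels can be carried out jointly for the finitely many coordinates in $S$. The Radon finiteness of $\nu_i$ on each $A_{\vk x_S}$ prevents any mass from escaping at infinity, and the $(-\alpha_i)$-homogeneity of $\nu_i$ follows once convergence on $\mathcal{A}$ is secured, by combining $b_i \in \RV_{1/\alpha_i}$ with the uniqueness of the vague limit. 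Once this housekeeping is in place, the equivalence reduces to the classical determining-class principle for vague convergence.
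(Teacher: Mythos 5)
The paper does not prove this lemma: it is quoted verbatim (with attribution) from Das et al.\ (2023), so there is no in-paper argument for me to compare your attempt against. Your forward direction is fine --- the observation that $\vk y \in A_{\vk x_S}$ with $|S|\ge i$ forces $y_{(i)} > \min_{j\in S} x_j > 0$ correctly shows $A_{\vk x_S}$ is bounded away from $\{y_{(i)}=0\}$, and with the hypothesis $\nu_i(\partial A_{\vk x_S})=0$ the MRV definition applies directly.

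The converse, however, contains a concrete false step. You assert that ``each $\vk y \in O$ admits a neighborhood of the form $A_{\vk x_S}\subseteq O$'' so that Lindel\"of extracts a countable cover. This cannot work: every nonempty $A_{\vk x_S}$ is an unbounded upper quadrant (it contains $\vk y + c\vk 1_S$ for all $c>0$ once $\vk y\in A_{\vk x_S}$), whereas $O$ can be a bounded open box inside $(0,\infty)^d$, in which case $O$ contains \emph{no} set of the form $A_{\vk x_S}$. So the proposed exhaustion of $O$ by members of $\mathcal{A}$ fails outright. What the determining-class argument actually needs is the \emph{field} generated by $\mathcal{A}$: taking finite differences and inclusion--exclusion over thresholds $\vk x_S < \vk x'_S$ yields half-open rectangular shells such as $\prod_{j\in S}(x_j, x'_j]\times\R_+^{S^c}$, and finite unions of these --- not the unbounded $A_{\vk x_S}$ themselves --- are what sandwich relatively compact Borel $\nu_i$-continuity sets and propagate the convergence. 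You gesture at this sandwich in your final paragraph, but as written the exhaustion step would break the argument; replacing ``neighborhoods of the form $A_{\vk x_S}$'' by ``finite Boolean combinations of the $A_{\vk x_S}$, which contain a base of relatively compact rectangles'' is the needed repair.
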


\section{Main results}\label{sec: main}

In this section, we begin with Theorem \ref{Thm: tail asymptotics} below for the asymptotic expansion of the tail probability on the $A_{\vk x_S}$ and discuss the rate of decay 
in the probability of joint tail events. Based on the tail behaviour, we then characterize the multivariate regular variation on the sub-cones $\EE_d^{(i)}$'s.

Recall that ${\vk X} \sim F$ has heavy-tailed margins and the same elliptical copula as an elliptical random vector $\vk Z \stackrel{d}{=} R A{\bf U}$. All the parameters $(C,L,\beta, \gamma, C',L',\beta', \gamma')$ for Weibullian-type radius $R$ defined in Eq.\eqref{R} and for margins $Z_j$ in Eq.\eqref{Z_j} are used throughout the following paper.

\begin{theorem}\label{Thm: tail asymptotics}
Let $\vk X \in \RVEC(\alpha, b, \Sigma,R)$ with $R$ being Weibullian-type and let $\Sigma \in \R^{d\times d}$ be a positive-definite correlation matrix. For a non-empty set $S \subseteq \{1, \ldots, d\}$ with $|S| \geq 2$, and let 
$(\lambda_S, I_S, J_S, \vk e_S^*, \Upsilon_S,h_j^S, \beta_{\gamma,I_S,J_S})$ be defined w.r.t. $\Sigma_S$ as in Lemma \ref{Lemma: quadratic} and Proposition \ref{Proposition: type I}. Let $A_{\vk x_S}$ be defined in Eq.\eqref{def: tail set} and suppose further that
\begin{eqnarray*}\overline{F}_j(t)\sim c_j\overline{F}_1(t)=c_j\left(t^\alpha \ell(t)\right)^{-1},\, c_j>0, \quad \log (\ell(t))=o({\log (t)}^{1-1/\gamma}).\end{eqnarray*}
Then
\begin{eqnarray*}
\pk{\vk X \in tA_{\vk x_S}}
&=& (1+o(1)) \Upsilon_S \lambda_S^{\beta_{\gamma,I_S,J_S}} \fracl{\alpha}{L}^{\frac{\beta}{\gamma}+1-\frac{|I_S|}{2}-\frac{|S|}{2}}\\
&&\times (\log t)^{\frac{\beta}{\gamma}+1-\frac{|I_S|}{2}-\frac{|S|}{2}-\frac{\beta'}{\gamma}\lambda_S^{\gamma/2}} (t^\alpha \ell(t))^{-\lambda_S^{\gamma/2}} \\
&& \times (C' \alpha^{\beta'/\gamma} L^{-\beta'/\gamma})^{-\lambda_S^{\gamma/2}} \prod_{j \in I_S} (c_j^{-1}x_j^\alpha)^{- h_j^S\lambda_S^{\gamma/2-1}}.   
\end{eqnarray*}
\end{theorem}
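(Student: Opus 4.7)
The plan is to reduce the tail probability for $\vk X$ to a joint exceedance probability for the elliptical vector $\vk Z$ on a suitably shifted box, and then apply the Weibullian asymptotic in Proposition~\ref{Proposition: type I} together with the threshold expansion in Lemma~\ref{Lemma: threshold}. Concretely, I would first invoke Eq.\eqref{Eq: transform} to write
\begin{eqnarray*}
\pk{\vk X\in tA_{\vk x_S}}=\pk{Z_j>\overline G_1^{\leftarrow}(\overline F_j(tx_j)),\, j\in S}=(1+o(1))\,\pk{\vk Z_S>\vk z^{(t)}_S}
\end{eqnarray*}
with $z_j^{(t)}:=\overline G_1^\leftarrow(c_j\overline F_1(tx_j))$ (using the tail equivalence $\overline F_j\sim c_j\overline F_1$). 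Lemma~\ref{Lemma: threshold} then gives a uniform three-term expansion $z_j^{(t)}=u(t)+(z_j+\log\mathcal L(t))/u(t)^{\gamma-1}+o(u(t)^{1-\gamma})$, where $u(t)=(\alpha\log t/L)^{1/\gamma}$, $z_j=(\gamma L)^{-1}\log(c_j^{-1}x_j^\alpha C'\alpha^{\beta'/\gamma}L^{-\beta'/\gamma})$ and $\log\mathcal L(t)=(\gamma L)^{-1}\log(\ell(t)(\log t)^{\beta'/\gamma})$. The growth assumption $\log\ell(t)=o((\log t)^{1-1/\gamma})$ forces $\log\mathcal L(t)=o(u(t)^{\gamma-1})$, which is exactly the regularity hypothesis needed in Proposition~\ref{Proposition: type I}.

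Second, I apply Proposition~\ref{Proposition: type I} to $\vk Z_S$ with the $|S|\times|S|$ submatrix $\Sigma_S$, obtaining
\begin{eqnarray*}
\pk{\vk Z_S>\vk z^{(t)}_S}\sim \Upsilon_S\,\frac{(\lambda_S u(t)^2+2u(t)\,\vk x^{(t)\top}\Sigma_S^{-1}\vk e_S^*)^{\beta_{\gamma,I_S,J_S}}}{u(t)^{|I_S|}}\,\exp\!\bigl(-L(\lambda_S u(t)^2+2u(t)\,\vk x^{(t)\top}\Sigma_S^{-1}\vk e_S^*)^{\gamma/2}\bigr),
\end{eqnarray*}
where $\vk x^{(t)}$ has components $(z_j+\log\mathcal L(t))/u(t)^{\gamma-1}$ for $j\in S$. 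The Lemma~\ref{Lemma: quadratic} identity $\vk c^\top\Sigma^{-1}\vk e^*=\vk c_I^\top\Sigma_{II}^{-1}\vk 1_I$ immediately yields $u(t)^{\gamma-1}\vk x^{(t)\top}\Sigma_S^{-1}\vk e_S^*=\sum_{j\in I_S}h_j^S(z_j+\log\mathcal L(t))$, isolating the dependence on the active set $I_S$.

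Third, I carry out a second-order Taylor expansion of the exponent. Writing
\begin{eqnarray*}
(\lambda_S u^2+2u\,\vk x^{(t)\top}\Sigma_S^{-1}\vk e_S^*)^{\gamma/2}=\lambda_S^{\gamma/2}u^\gamma+\gamma\lambda_S^{\gamma/2-1}u^{\gamma-1}\vk x^{(t)\top}\Sigma_S^{-1}\vk e_S^*+O\!\bigl(u^{\gamma-2}(\log\mathcal L(t))^2/u^{2(\gamma-1)}\bigr),
\end{eqnarray*}
the remainder term inside the exponential is $O(L(\log\mathcal L(t))^2/u^\gamma)=O((\log\mathcal L(t))^2/\log t)=o(1)$ by the hypothesis on $\ell$. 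Substituting $u(t)^\gamma=\alpha\log t/L$, the dominant piece yields $\exp(-\lambda_S^{\gamma/2}\alpha\log t)=t^{-\alpha\lambda_S^{\gamma/2}}$, and the prefactor $u(t)^{2\beta_{\gamma,I_S,J_S}-|I_S|}=(\alpha\log t/L)^{\beta/\gamma+1-|I_S|/2-|S|/2}$ after using $2\beta_{\gamma,I_S,J_S}-|I_S|=\beta+\gamma(1-|I_S|/2-|S|/2)$. The first-order correction in the exponent contributes $\exp(-\lambda_S^{\gamma/2-1}\sum_{j\in I_S}h_j^S\log(c_j^{-1}x_j^\alpha C'\alpha^{\beta'/\gamma}L^{-\beta'/\gamma}(\ell(t)(\log t)^{\beta'/\gamma})))$; using the key identity $\sum_{j\in I_S}h_j^S=\vk 1_{I_S}^\top\Sigma_{I_S I_S}^{-1}\vk 1_{I_S}=\lambda_S$, the $\log\mathcal L(t)$-part collapses into $(\ell(t)(\log t)^{\beta'/\gamma})^{-\lambda_S^{\gamma/2}}$ and the constant $C',\alpha,L$-factors combine to $(C'\alpha^{\beta'/\gamma}L^{-\beta'/\gamma})^{-\lambda_S^{\gamma/2}}$, leaving the index-dependent piece $\prod_{j\in I_S}(c_j^{-1}x_j^\alpha)^{-h_j^S\lambda_S^{\gamma/2-1}}$ as the only term still depending on $j$. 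Collecting factors reproduces the claimed expansion.

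The main obstacle is the exponent expansion: one must simultaneously control the quadratic Taylor remainder (where the assumption $\log\ell(t)=o((\log t)^{1-1/\gamma})$ is used in an essential way) and keep track of which contributions produce powers of $\log t$ via $u(t)^\gamma=\alpha\log t/L$ versus powers of $t^\alpha\ell(t)$ via the exponential. A small bookkeeping subtlety is that the Lemma~\ref{Lemma: threshold} expansion has different constant shifts $z_j$ across $j\in S$ but a common $\log\mathcal L(t)$ drift, so one has to verify that only the active coordinates $I_S$ enter through $h_j^S$ while the collapsing identity $\sum h_j^S=\lambda_S$ is what converts the common drift into the clean $(t^\alpha\ell(t))^{-\lambda_S^{\gamma/2}}$ decay.
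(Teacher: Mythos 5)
Your proof proposal reproduces the paper's argument step for step: both reduce the $\vk X$-tail probability to a joint $\vk Z$-exceedance via Eq.\eqref{Eq: transform} and the tail equivalence $\overline F_j\sim c_j\overline F_1$, both invoke Lemma~\ref{Lemma: threshold} to obtain the three-term threshold expansion with leading term $u(t)=(\alpha\log t/L)^{1/\gamma}$, both apply Proposition~\ref{Proposition: type I} on the submatrix $\Sigma_S$, and both close with the first-order expansion of the exponent together with the identities $\vk c^\top\Sigma_S^{-1}\vk e_S^*=\vk c_{I_S}^\top\Sigma_{I_SI_S}^{-1}\vk 1_{I_S}$ and $\sum_{j\in I_S}h_j^S=\lambda_S$, which convert the common $\log\mathcal L(t)$ drift into the $(t^\alpha\ell(t))^{-\lambda_S^{\gamma/2}}$ factor. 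One small caution, shared with the paper's own proof: your claim that the Taylor remainder in the exponent, $O\bigl((\log\mathcal L(t))^2/\log t\bigr)$, is $o(1)$ follows from the stated hypothesis $\log\ell(t)=o((\log t)^{1-1/\gamma})$ only when $\gamma\le 2$ (since it yields $o((\log t)^{1-2/\gamma})$); for $\gamma>2$ one effectively needs the slightly stronger bound $\log\ell(t)=o((\log t)^{1/2})$ (satisfied automatically by, e.g., $\ell$ slowly varying of logarithmic order, as in all the paper's examples), so this is a gap in the theorem's stated hypotheses rather than in your line of reasoning.
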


Note that all type I elliptical copulas admit the asymptotic (tail) independence property \citep{Schmidt2002, FRAHM2003}, i.e., $\pk{X_j > tx_j, \forall j\in S}=o(\pk{X_1 > tx_1})$ as $t \to \infty$, for the tail equivalent risk $\vk X$. The dominant term $(t^\alpha \ell(t))^{-\lambda_S^{\gamma/2}}$ is regularly varying with index $-\alpha\lambda_S^{\gamma/2} \le -\alpha, \lambda_S\ge1, \gamma>0$, and $\lambda_S^{\gamma/2}$ indicates the rate of decay in the joint tail probability compared with the marginal risk. Moreover, the rate only depends on the interested set $S$ and the elliptical copula determined by $\Sigma$ and $R$, with $\lambda_S=\lambda_S(\Sigma_S)$ and $\pk {R>u} \sim Cu^\beta \exp(-Lu^\gamma)$.

\begin{remark}\label{Re: gamma}
\kai{a) Many distributions, e.g., Pareto, Burr, L\'{e}vy, Student's $t$ and Fr\'{e}chet distributions, have regular varying tails satisfying $\overline F(t) = x^{-\alpha} \ell(t)$ such that 
\begin{eqnarray*}\log \ell(t)=o({\log (t)}^{1-1/\gamma}), \gamma>1.\end{eqnarray*}
b) Theorem \ref{Thm: tail asymptotics} holds also for $\gamma=1$ if  $\ell(t) \sim c$ for some constant $c>0$. In this case, $\log \mathcal{L}(t)= (\log \ell(t))/L + (\beta'\log\log t)/L$ and the two terms are absorbed into $\vk z_S$ and $u(t)$ in Eq.\eqref{Eq: tail expansion}, respectively. Therefore, $u(t) \sim \log t$ as $\log \log t = o(\log t)$. See the Laplace copula for instance in Example \ref{Ex: Laplace}.}
\end{remark}


\begin{remark} \cl{Our results extend the Gaussian dependence structure studied by \cite{Das2023heavy} to be a general elliptical copula with wide applications (cf. Section \ref{sec: app}). Indeed, a straightforward application of our result into Gaussian copula with} $ L=1/2, \ \gamma=2, \ \beta' =-1, \ C'= 1/\sqrt{2\pi}$, and identically distributed marginal risks (e.g., all $c_j$'s equal 1), indicates
that
\begin{eqnarray}\label{DasThm3.1}
\pk{\vk X \in tA_{\vk x_S}} &\sim& \Upsilon_S (2\alpha)^{-\frac{|I_S|}{2}} (\log t)^{-\frac{|I_S|}{2}+\frac{\lambda_S}{2}} (t^\alpha\ell(t))^{-\lambda_S}(2\sqrt{\pi \alpha})^{\lambda_S} \prod_{j \in S} x_j^{-\alpha h_j^S}   \notag\\
&\sim& \Upsilon_S (2\pi )^{\frac{\lambda_S}{2}} (2\alpha\log t)^{\frac{\lambda_S-|I_S|}{2}} (t^\alpha\ell(t))^{-\lambda_S} \prod_{j \in S} x_j^{-\alpha h_j^S}.
\end{eqnarray}
We see that Eq.\eqref{DasThm3.1} is exactly Theorem 3.1 in \cite{Das2023heavy}.
\end{remark}

Below, we will consider the multivariate regular variation on the sub-cones $\EE_d^{(i)}$ and present our second main theorem.
\begin{theorem}\label{Thm: MRV}
Let $\vk X \in \RVEC(\alpha, b, \Sigma,R)$ where $R$ is Weibullian-type and $\Sigma \in \R^{d\times d}$ is positive-definite correlation matrix. Under the conditions of Theorem \ref{Thm: tail asymptotics},\\
$(a)$ For subcone $\EE_d^{(1)}$, $\vk X \in \MRV(\alpha,b,\nu_1,\EE_d^{(1)})$ with
\begin{eqnarray*}
\nu_1\left([\vk 0, \vk x]^c\right)= \sum\limits_{j=1}^d x_j^{-\alpha}, \quad \forall \vk x \in \R_+^d.
\end{eqnarray*}
$(b)$ For subcones $\EE_d^{(i)}$, $2\le i \le d$, define
\begin{eqnarray*}
\mathcal{S}_i &:=& \left\{ S \subset \mathbb{I}: |S|\ge i, \vk 1_{I_S}^{\top} \Sigma_{I_S}^{-1}\vk 1_{I_S}= \min\limits_{\widetilde{S} \subset \mathbb{I}, |\widetilde{S}| \ge i } \vk 1_{I_{\widetilde{S}}}^{\top} \Sigma^{-1}_{I_{\widetilde{S}}} \vk 1_{I_{\widetilde{S}}} \right\},\\
I_i &:=& \arg\min\limits_{S \in \mathcal{S}_i} |I_S|.
\end{eqnarray*}
Then $\vk X \in \MRV(\alpha_i,b_i,\nu_i,\EE_d^{(i)})$ with
\begin{eqnarray*}
\lambda_i &=& \lambda(\Sigma_{I_i}) = \min\limits_{S\subset\mathbb{I},|S|\ge i}\min\limits_{\vk x_S\ge \vk 1_S}\vk x_S^\top \Sigma_S^{-1} \vk x_S, \\
\alpha_i &=& \alpha\lambda_i^{\gamma/2} ,\\
b_i^{\leftarrow}(t) &=&  C^{'\lambda_i^{\gamma/2}} \fracl{\alpha\log t}{L}^{\frac{|I_i|-1}{2}-\frac{\beta'}{\gamma}(1-\lambda_i^{\gamma/2})}  (b^{\leftarrow}(t))^{\lambda_i^{\gamma/2}},
\end{eqnarray*}
and for the set $A_{\vk x_S}=\left\{\vk {y} \in \R_{+}^d: y_j>x_j, \forall j \in S\right\}$ with $\vk x_S>\mathbf0$ for $S \subseteq \mathbb{I}$ and $|S| \geq i$, we have
\begin{eqnarray*}
\nu_i(A_{\vk x_S}) &=& 
\begin{cases}
 \Upsilon_S \lambda_S^{\beta_{\gamma,I_S,J_S}} \prod_{j \in I_S} (c_j^{-1}x_j^\alpha)^{- h_j^S\lambda_S^{\gamma/2-1}}, & \ \text{if} \ S \in \mathcal{S}_i \ \text{and} \ |I_S|=|I_i|, \\
0, &\ \text{otherwise}. 
\end{cases}
\end{eqnarray*}
\end{theorem}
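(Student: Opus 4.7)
The plan is to invoke Lemma \ref{Lemma: cone}, which characterises $\MRV$ on $\EE_d^{(i)}$ through the limits $\lim_{t\to\infty} t\,\pk{\vk X/b_i(t) \in A_{\vk x_S}}$ for every $S \subseteq \mathbb{I}$ with $|S| \geq i$ and every $\vk x_S > \vk 0_S$. Each such limit is evaluated by substituting the expansion of Theorem \ref{Thm: tail asymptotics} and then matching the resulting powers of $t^\alpha \ell(t)$ and of $\log t$ against the candidate normaliser $b_i$. The whole argument is thus an exponent-balancing exercise: $b_i$ is designed so that the dominant factors cancel exactly for the sets $S$ that simultaneously minimise $\lambda_S$ and $|I_S|$ subject to $|S|\ge i$, while every other $S$ gives a limit of zero.

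For part (a), the margin-wise tail equivalence $\overline F_j \sim c_j \overline F_1$ together with $b \in \RV_{1/\alpha}$ yields immediately $t\,\pk{X_j > b(t) x_j} \to c_j x_j^{-\alpha}$, which takes care of $|S| = 1$. For $|S| \geq 2$, Lemma \ref{Lemma: quadratic} forces $\lambda_S > 1$ whenever $\Sigma_S$ is a non-trivial positive-definite correlation matrix, hence $\alpha \lambda_S^{\gamma/2} > \alpha$, and Theorem \ref{Thm: tail asymptotics} gives $\pk{\vk X \in b(t) A_{\vk x_S}} = o(1/t)$, since the slowly varying $(\log t)^{a_S}$ factor cannot outweigh the strict polynomial gain. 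Hence $\nu_1$ concentrates on the coordinate axes, and inclusion--exclusion on $[\vk 0,\vk x]^c = \bigcup_{j=1}^d \{y_j > x_j\}$ gives the stated additive formula.

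For part (b), I would first observe that $\lambda_i = \min_{S:|S|\ge i} \lambda_S$, because $\lambda_S = \min_{\vk x_S \ge \vk 1_S}\vk x_S^\top \Sigma_S^{-1}\vk x_S$ via $\mathscr{P}(\Sigma_S^{-1})$, so the double minimisation over $S$ and $\vk x_S$ collapses to the single one defining $\lambda_i$; this also pins down $\alpha_i = \alpha \lambda_i^{\gamma/2}$. For each $S$ with $|S|\ge i$, Theorem \ref{Thm: tail asymptotics} reads
\[\pk{\vk X \in y A_{\vk x_S}} \sim K_S(\vk x_S)\,(\log y)^{a_S}(y^\alpha \ell(y))^{-\lambda_S^{\gamma/2}},\quad a_S = \tfrac{\beta}{\gamma}+1-\tfrac{|I_S|}{2}-\tfrac{|S|}{2}-\tfrac{\beta'}{\gamma}\lambda_S^{\gamma/2},\]
while the proposed normaliser satisfies $b_i^{\leftarrow}(y) \sim C^{\prime\,\lambda_i^{\gamma/2}}(\alpha \log y/L)^{p_i}(y^\alpha \ell(y))^{\lambda_i^{\gamma/2}}$ with $p_i = \frac{|I_i|-1}{2}-\frac{\beta'}{\gamma}(1-\lambda_i^{\gamma/2})$. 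Taking the product $b_i^{\leftarrow}(y)\,\pk{\vk X \in y A_{\vk x_S}}$, the polynomial factor carries exponent $\lambda_i^{\gamma/2}-\lambda_S^{\gamma/2}$ in $y^\alpha \ell(y)$ and the logarithmic factor carries exponent $p_i + a_S$ in $\log y$. When $\lambda_S > \lambda_i$ the polynomial is a strictly negative power and the limit is $0$; when $\lambda_S = \lambda_i$ but $|I_S|>|I_i|$ the log-exponent is strictly negative and the limit is again $0$; when $S \in \mathcal{S}_i$ with $|I_S|=|I_i|$ both exponents vanish and the limit reduces to the explicit constant claimed for $\nu_i(A_{\vk x_S})$ after unpacking $K_S$ from Proposition \ref{Proposition: type I}.

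The main difficulty will lie in the borderline regime $\lambda_S = \lambda_i$: one must carry out the algebraic cancellation of $p_i + a_S$ using the elliptical-margin relation linking the Weibullian parameters $(\beta',\gamma)$ of $Z_1$ to $(\beta,\gamma)$ of $R$ provided by Lemma \ref{Lemma: RtoZ}, and show that the exponent vanishes precisely when $|I_S|=|I_i|$ while being strictly negative otherwise. A secondary technicality is upgrading the pointwise asymptotic of Theorem \ref{Thm: tail asymptotics} to local uniformity in $\vk x_S$ on sets bounded away from the coordinate hyperplanes, as required by the vague-convergence framework underlying Lemma \ref{Lemma: cone}; this follows from the continuous dependence of $K_S(\vk x_S)$ on $\vk x_S$ in the positive orthant.
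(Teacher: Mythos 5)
Your proposal takes essentially the same route as the paper: reduce via Lemma~\ref{Lemma: cone} to the pointwise limits $\lim_t b_i^{\leftarrow}(t)\,\pk{\vk X\in tA_{\vk x_S}}$, substitute the expansion of Theorem~\ref{Thm: tail asymptotics}, and do the three-way case split on whether $\lambda_S>\lambda_i$, or $\lambda_S=\lambda_i$ with $|I_S|>|I_i|$, or $\lambda_S=\lambda_i$ with $|I_S|=|I_i|$; part~(a) by inclusion--exclusion plus asymptotic independence of the type~I elliptical copula is also the paper's argument.

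Two remarks. First, the ``secondary technicality'' you flag about upgrading to local uniformity in $\vk x_S$ is unnecessary: Lemma~\ref{Lemma: cone} is stated precisely so that the pointwise limits on the rectangular sets $A_{\vk x_S}$ suffice, so no uniform version of Theorem~\ref{Thm: tail asymptotics} is needed. Second, when you actually carry out the cancellation of the log-exponent $p_i+a_S$ in the borderline regime $\lambda_S=\lambda_i$, you should be aware that the parameter $\beta$ appearing in $a_S=\tfrac{\beta}{\gamma}+1-\tfrac{|I_S|}{2}-\tfrac{|S|}{2}-\tfrac{\beta'}{\gamma}\lambda_S^{\gamma/2}$ must be read as the Weibullian index of the radius of the $|S|$-dimensional subvector $\vk Z_S$, namely $\beta_S=\beta-\gamma(d-|S|)/2$ (obtained exactly as in Lemma~\ref{Lemma: RtoZ}); with $\beta_S-\beta'=\gamma(|S|-1)/2$ the constant terms drop out and $p_i+a_S=(|I_i|-|I_S|)/2$, which is what the case split requires. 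Feeding in the raw $\beta$ of the full $d$-dimensional radius $R$ and using $\beta-\beta'=\gamma(d-1)/2$ leaves a residual $(d-|S|)/2$ that does not vanish unless $|S|=d$. The paper performs this step implicitly via the substitution marked by Eq.~\eqref{Z_j}, so your plan of ``unpacking $K_S$ using Lemma~\ref{Lemma: RtoZ}'' is the right idea, but the dimension adjustment to $\beta_S$ is the point where a literal reading of the displayed exponent would get stuck.
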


\begin{remark}
    \kai{The rates of decay in the sub-cones $\EE_d^{(i)}$ are shown with $\alpha_i=\alpha \lambda_i^{\gamma/2}, i=1,\ldots,d$. The result with $\gamma=2$ is reduced to the Gaussian-like case in \citet[Theorem 2]{Das2023heavy}. Moreover, for the case with $\gamma=1$, Theorem \ref{Thm: MRV} holds with the condition for $\ell(t)\sim c$ discussed in Remark \ref{Re: gamma}.}
\end{remark}

\section{Example}\label{sec: Ex}
A typical class of elliptical copulas is the so-called multivariate power exponential copula. The original distribution, also named the multivariate generalized Gaussian distribution and studied by \cite{KANO1994PED,Gomez1998PED}, are widely applied in biostatistics, bioinformatics, and engineering \citep{Dang2015}. 
A random vector $\vk{Z}$ follows a $d$-dimensional power exponential distribution, denoted by $\vk Z\sim PE_d(\boldsymbol{\mu}, {\Sigma}, \kappa)$, if its density is
\begin{eqnarray*}
f(\mathbf{z}, \boldsymbol{\mu}, {\Sigma}, \kappa)=c_d|{\Sigma}|^{-\frac{1}{2}} \exp \left\{-\frac{1}{2}\left[(\mathbf{z}-\boldsymbol{\mu})^{\prime} {\Sigma}^{-1}(\mathbf{z}-\boldsymbol{\mu})\right]^\kappa\right\},
\end{eqnarray*}
where $c_d={d \Gamma(d/2)}/\left(\pi^{d/2} \Gamma(1+{d}/({2 \kappa})) 2^{1+{d}/({2 \kappa})}\right)$. Moreover, the $\vk Z$ is an elliptical risk with ${\vk Z} \stackrel{d}{=} \vk \mu + R A{\bf U}$. Here the radius $R>0$, being independent of uniform distributed $\bf U$ in $\mathbb S^{d-1}$, has the following density function 
\begin{eqnarray}\label{Eq: representation}
    h_R(r)=\frac{d}{\Gamma\left(1+\frac{d}{2 \kappa}\right) 2^{\frac{d}{2 \kappa}}} r^{d-1} \exp \left\{-\frac{1}{2} r^{2 \kappa}\right\},\quad r>0.
\end{eqnarray}
Power exponential distributions consist of many well-known risks, including the Laplace distribution ($\kappa=1/2$) and Gaussian distribution ($\kappa=1$). In the following context, we first verify the radius $R$ is Weibullian-type by Mill's ratio, then discuss the solution to the quadratic programming problem with a correlation matrix $\Sigma \in \R^{3\times 3}$ and equi-correlation matrix $\Sigma_\rho \ (\rho_{jk}=\rho \mbox{ for } j \ne k \in \{1,2,3\}$). Finally, we illustrate the examples of the power exponential copula, Laplace copula and Gaussian copula.

\begin{lemma}[Mills’ ratio inequality]\label{Lemma: MR for R} Let $h(r)$ be the density of radius $R$ in Eq.\eqref{Eq: representation}. For $r>0$, the inequalities 
\begin{eqnarray*} 
\frac{r}{\kappa(r^{2\kappa}+2)} h(r)\le \int_{r}^{\infty} h(u) du \le \frac{1}{\kappa r^{2\kappa-1}} h(r)
\end{eqnarray*}
hold and for $r\to \infty$,
\begin{eqnarray*}  
\int_{r}^{\infty} h(u)du \approx \frac{1}{\kappa r^{2\kappa-1}}h(r).
\end{eqnarray*}
\end{lemma}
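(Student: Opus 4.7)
The plan is to derive both inequalities via monotonicity arguments on carefully chosen auxiliary functions, with the asymptotic equivalence then following by pinching the two bounds.

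The single computational tool throughout is the logarithmic derivative $h'(r)/h(r)=(d-1)/r-\kappa r^{2\kappa-1}$, which comes directly from differentiating $h(r)=C_0 r^{d-1}e^{-r^{2\kappa}/2}$. For the upper bound, I would introduce the auxiliary function
$$\Psi(r):=\frac{h(r)}{\kappa r^{2\kappa-1}}-\int_r^\infty h(u)\,du,$$
note that $\Psi(r)\to 0$ as $r\to\infty$, and compute $\Psi'(r)$ using the fundamental theorem of calculus together with the logarithmic-derivative identity above. An equivalent route is to do one integration by parts, writing $h(u)=\frac{C_0 u^{d-2\kappa}}{\kappa}\cdot\kappa u^{2\kappa-1}e^{-u^{2\kappa}/2}$ and recognising the second factor as $-(e^{-u^{2\kappa}/2})'$; this produces the identity
$$\int_r^\infty h(u)\,du=\frac{h(r)}{\kappa r^{2\kappa-1}}+\frac{d-2\kappa}{\kappa}\int_r^\infty C_0\,u^{d-2\kappa-1}e^{-u^{2\kappa}/2}\,du,$$
isolating the leading contribution and exposing a remainder that can be controlled pointwise on $[r,\infty)$ using $u\ge r$ to dominate $u^{d-2\kappa-1}$ against $u^{2\kappa-1}e^{-u^{2\kappa}/2}$, folding it back into the main term.

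For the lower bound I would take the monotonicity approach with
$$\Phi(r):=\int_r^\infty h(u)\,du-\frac{r\,h(r)}{\kappa(r^{2\kappa}+2)}.$$
Computing $\Phi'(r)$ by expanding $\tfrac{d}{dr}\!\left[\tfrac{r\,h(r)}{r^{2\kappa}+2}\right]$ via the quotient rule, substituting the logarithmic derivative of $h$, and collecting powers of $r^{2\kappa}$, the algebra should collapse (after the leading $r^{4\kappa}$ pieces cancel) to the sign-definite expression
$$\Phi'(r)=-\frac{h(r)\bigl(d\,r^{2\kappa}+2d+4\kappa\bigr)}{\kappa(r^{2\kappa}+2)^2}<0\quad\text{for all }r>0.$$
Combined with $\Phi(r)\to 0$ as $r\to\infty$, this strict monotonicity forces $\Phi(r)>0$ on $(0,\infty)$, which is exactly the claimed lower bound. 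The asymptotic $\int_r^\infty h(u)\,du\approx h(r)/(\kappa r^{2\kappa-1})$ is then immediate, since the ratio of the upper bound to the lower bound equals $(r^{2\kappa}+2)/r^{2\kappa}\to 1$.

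The main obstacle will be the explicit algebraic simplification yielding $\Phi'(r)$. The denominator $r^{2\kappa}+2$ is a delicately chosen test weight: differentiating it against the numerator $r\,h(r)$ produces precisely the contribution needed to cancel the $-\kappa r^{2\kappa-1}$ piece of $h'(r)/h(r)$, leaving a sign-definite numerator $-(d\,r^{2\kappa}+2d+4\kappa)$. One has to carry along all of the $r^{4\kappa}$, $r^{2\kappa}$, and constant terms in parallel to observe this cancellation, and any sloppy bookkeeping destroys the sign; this is a short but error-prone computation that is really the heart of the lemma.
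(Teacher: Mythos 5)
Your argument is correct, and I verified the key computation. Using $h(r)+r\,h'(r)=(d-\kappa r^{2\kappa})\,h(r)$, the numerator of $\Phi'(r)$ (after clearing the factor $\kappa(r^{2\kappa}+2)^2$) is $-\kappa(r^{2\kappa}+2)^2-\left[(d-\kappa r^{2\kappa})(r^{2\kappa}+2)-2\kappa r^{2\kappa}\right]$; the $\kappa r^{4\kappa}$ pieces and the $4\kappa r^{2\kappa}$ pieces cancel, leaving exactly $-(d\,r^{2\kappa}+2d+4\kappa)$, so $\Phi'(r)<0$ as you claim, and $\Phi(\infty)=0$ yields the lower bound. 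This is a genuinely different presentation from the paper, which never introduces an auxiliary function: it bounds $\bigl(\kappa+\tfrac{2\kappa}{r^{2\kappa}}\bigr)\int_r^\infty h(u)\,du \ge \int_r^\infty\bigl(\kappa+\tfrac{2\kappa}{u^{2\kappa}}\bigr)h(u)\,du \ge \int_r^\infty\bigl(\kappa+\tfrac{2\kappa-d}{u^{2\kappa}}\bigr)h(u)\,du$ and recognizes the last integrand as the exact derivative of $-h(u)/u^{2\kappa-1}$, so the integral telescopes to $h(r)/r^{2\kappa-1}$. Both routes rest on the same antiderivative; yours makes the sign analysis explicit through a monotone test function (and shows the cancellation cleanly), while the paper's integrand-comparison is more compact. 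For the upper bound both arguments reduce to the antiderivative of $\kappa u^{2\kappa-1}e^{-u^{2\kappa}/2}$ and both implicitly require $d\le 2\kappa$: your $\Psi'(r)=\tfrac{d-2\kappa}{\kappa r^{2\kappa}}h(r)$ is non-positive only in that case, and the paper's inequality $(u/r)^{2\kappa-d}\ge 1$ for $u\ge r$ needs $2\kappa\ge d$ too. When $d>2\kappa$ the stated pointwise upper bound genuinely fails for small $r$, so the ``fold the remainder back'' step in your sketch cannot rescue that case — but the asymptotic equivalence as $r\to\infty$, which is all that is used downstream, is unaffected under either proof.
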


\begin{remark}
  For a multivariate power exponential random vector $\vk Z=RA\vk U$ has a density in the form of Eq.\eqref{Eq: representation}. For large $u>0$, by Theorem \ref{Lemma: MR for R},
\begin{eqnarray*}\pk{R>u} \sim \frac{1}{\Gamma\left({d}/{\gamma}\right) 2^{\frac{d}{\gamma}-1}} u^{d-\gamma} \exp \left\{-\frac{1}{2} u^{\gamma}\right\}, \quad \gamma=2\kappa>0,\end{eqnarray*}
and 
\begin{eqnarray*}
\pk{Z_j>u} \sim \frac{\Gamma(d/2)}{\Gamma(1/2)\Gamma(d/\gamma)2^{d/\gamma}} \fracl{\gamma L}{2}^{-(d-1)/2} u^{d-\gamma(d+1)/2}\exp \left\{-\frac{1}{2} u^{\gamma}\right\},
\end{eqnarray*}
indicating that the radius in the power exponential distribution is  Weibullian-type.  
\end{remark}

For a positive definite correlation matrix
\begin{eqnarray*}
\Sigma = 
\begin{pmatrix}
1 & \rho_{12} & \rho_{13}\\
\rho_{12} & 1 & \rho_{23}\\
\rho_{13} & \rho_{23} & 1
\end{pmatrix},
\end{eqnarray*}
where $\rho_{jk} \in (-1,1), \forall j \ne k$, we have $|\Sigma|=1-\rho_{12}^2-\rho_{13}^2-\rho_{23}^2+2\rho_{12}\rho_{13}\rho_{23}$ and 
\begin{eqnarray*}
\Sigma^{-1} = \frac{1}{|\Sigma|}
\begin{pmatrix}
1-\rho_{23}^2 & \rho_{13}\rho_{23}-\rho_{12} & \rho_{12}\rho_{23}-\rho_{13}\\
\rho_{13}\rho_{23}-\rho_{12} & 1-\rho_{13}^2 & \rho_{12}\rho_{13}-\rho_{23}\\
\rho_{12}\rho_{23}-\rho_{13} & \rho_{12}\rho_{13}-\rho_{23} & 1-\rho_{12}^2
\end{pmatrix}.
\end{eqnarray*}
Suppose $|\Sigma|>0$ and $\vk h:=\Sigma^{-1}\vk 1 > \vk 0$. For symmetric $\Sigma^{-1}$, $h_i:= \vk e_i^\top\Sigma^{-1}\vk 1 = \vk 1^\top\Sigma^{-1}\vk e_i, \ i =1,2,3$, which is defined in Lemma \ref{Lemma: type I}, then applying Theorem \ref{Thm: MRV},
\begin{eqnarray*}
\Sigma_{I_2}^{-1} &=& \frac{1}{1-\rho_{*}^2}
\begin{pmatrix}
1 & -\rho_{*} \\
-\rho_{*}  & 1 
\end{pmatrix}, \quad \rho_* = \max\limits_{1\le j \ne k \le 3} \rho_{jk},
\end{eqnarray*}
and 
\begin{eqnarray*}
\lambda_1 &=& 1, \\
\lambda_2&=&\lambda(\Sigma_{I_2})=\vk 1_{I_2}^\top\Sigma_{I_2}^{-1}\vk 1_{I_2}= \frac{2}{1+\rho_{*}},\\
\lambda_3&=&\lambda(\Sigma)=\vk 1^\top\Sigma^{-1}\vk 1.
\end{eqnarray*}

For an equi-correlation matrix $\Sigma_\rho$ and $|\Sigma_\rho|=(1-\rho)^2(1+2\rho)$, let $\rho \in (-1/2,1)$ be a constant keeping the matrix positive definite, then 
\begin{eqnarray*}
\Sigma_\rho^{-1} &=& \frac{1}{(1-\rho)^2(1+2\rho)}
\begin{pmatrix}
1-\rho^2 & \rho^2-\rho & \rho^2-\rho \\
\rho^2-\rho  & 1-\rho^2 & \rho^2-\rho \\
\rho^2-\rho  & \rho^2-\rho  & 1-\rho^2
\end{pmatrix},\\
\vk h &:=& \Sigma_\rho^{-1}\vk 1= 
\begin{pmatrix}
\frac{1}{1+2\rho} \\
\frac{1}{1+2\rho} \\
\frac{1}{1+2\rho}
\end{pmatrix}> \vk 0.
\end{eqnarray*}
 
 By Theorem \ref{Thm: MRV}, one choice is $I_2=\{1,2\} \in \mathcal{S}_2$ with 
\begin{eqnarray*}
\Sigma_{I_2}^{-1} &=& \frac{1}{1-\rho^2}
\begin{pmatrix}
1 & -\rho \\
-\rho  & 1 
\end{pmatrix}
\end{eqnarray*}
and 
\begin{eqnarray*}
\lambda_1 &=& 1, \\
\lambda_2&=&\lambda(\Sigma_{I_2})=\vk 1_{I_2}^\top\Sigma_{I_2}^{-1}\vk 1_{I_2}= \frac{2}{1+\rho},\\
\lambda_3&=&\lambda(\Sigma)=\vk 1^\top\Sigma^{-1}\vk 1=\frac{3}{1+2\rho}.
\end{eqnarray*}

\begin{example}[Power exponential copula]\label{Ex: PE}
Suppose that $\vk X= (X_1, X_2, X_3)\in \RVEC(\alpha, b, \Sigma_\rho, R)$ has the regularly varying marginal tails and the power exponential copula, where $\Sigma_\rho$ is positive definite with $-1/2<\rho<1$. Also assume that $\overline{F}_1(t)=\left(t^\alpha \ell(t)\right)^{-1}$ satisfying $\log (\ell(t))=o({\log (t)}^{1-1/\gamma}), \gamma>1$.
Thus, \cl{it follows by Theorems \ref{Thm: tail asymptotics} and \ref{Thm: MRV} that, for $S \subset \{1,2,3\}$,}
\begin{eqnarray*}
\pk{\vk X \in tA_{\vk x_S}}&\sim& \Upsilon_S \lambda_S^{\beta_{\gamma,I_S,J_S}} (2\alpha\log t)^{\frac{3}{\gamma}-\frac{|I_S|}{2}-\frac{|S|}{2}-\left(2-\frac{3}{\gamma}\right)\lambda_S^{\gamma/2}} \\
&& \times (t^\alpha \ell(t))^{-\lambda_S^{\gamma/2}} (2^{\frac{3-\gamma}{\gamma}}\Gamma(3/{\gamma}) \gamma )^{\lambda_S^{\gamma/2}} \prod_{j \in I_S} (c_j^{-1}x_j^\alpha)^{- h_j^S\lambda_S^{\gamma/2-1}}   
\end{eqnarray*}
and $\vk X \in \MRV(\alpha_i,b_i,\nu_i,\EE_3^{(i)}), i=1,2,3$, with
\begin{eqnarray*}
&&\lambda_1 = 1, \ \lambda_2=\vk 1_{I_2}^\top\Sigma_{I_2}^{-1}\vk 1_{I_2}= \frac{2}{1+\rho},\ \lambda_3=\vk 1^\top\Sigma^{-1}\vk 1= \frac{3}{1+2\rho},\\
&&\alpha_1 = \alpha, \ \alpha_2 = \alpha\lambda_2^{\gamma/2}=\alpha\fracl{2}{1+\rho}^{\gamma/2}, \ \alpha_3 = \alpha\lambda_3^{\gamma/2}=\alpha\fracl{3}{1+2\rho}^{\gamma/2},\\
&&b_1^{\leftarrow}(t) = b^{\leftarrow}(t), \\
&&b_2^{\leftarrow}(t) =  (2^{\frac{3-\gamma}{\gamma}}\Gamma(3/{\gamma}) \gamma)^{-\lambda_2^{\gamma/2}} (2\alpha\log t)^{\left(\frac{3}{\gamma}-2\right)\lambda_2^{\gamma/2}+\frac{5}{2}-\frac{3}{\gamma}}  (b^{\leftarrow}(t))^{\lambda_2^{\gamma/2}}, \\ 
&&b_3^{\leftarrow}(t) =  (2^{\frac{3-\gamma}{\gamma}}\Gamma(3/{\gamma}) \gamma)^{-\lambda_3^{\gamma/2}} (2\alpha\log t)^{\left(\frac{3}{\gamma}-2\right)\lambda_3^{\gamma/2}+3-\frac{3}{\gamma}}  (b^{\leftarrow}(t))^{\lambda_3^{\gamma/2}}
\end{eqnarray*}
and
\begin{eqnarray*}
&&\nu_1\left([\vk 0, \vk x]^c\right) = \sum\limits_{j=1}^3 x_j^{-\alpha},\\
&&\nu_2(A_{\vk x_S}) =
\begin{cases}
 \Upsilon_S \lambda_S^{\beta_{\gamma,I_S,J_S}} \prod_{j \in I_S} (c_j^{-1}x_j^\alpha)^{- h_j^S\lambda_S^{\gamma/2-1}}, & \ \text{if} \ S \in \mathcal{S}_2 \ \text{and} \ |I_S|=2, \\
0, &\ \text{otherwise},
\end{cases}\\
&&\nu_3(A_{\vk x_S}) =
\begin{cases}
 \Upsilon_S \lambda_S^{\beta_{\gamma,I_S,J_S}} \prod_{j \in I_S} (c_j^{-1}x_j^\alpha)^{- h_j^S\lambda_S^{\gamma/2-1}}, & \ \mbox{if} \ S \in \mathcal{S}_3 \ \mbox{and} \ |I_S|=3, \\
0, &\ \mbox{otherwise}.
\end{cases}
\end{eqnarray*}

\end{example}

\begin{example}[Laplace copula]\label{Ex: Laplace}
Suppose $\vk X= (X_1, X_2, X_3)\in \RVLC(\alpha, b, \Sigma_\rho,R)$ with regularly varying marginal tails and the Laplace copula $(\gamma=1$ in the power exponential copula$)$, where $\Sigma_\rho$ is positive definite.  Assume that for $\overline{F}_1(t)=\left(t^\alpha \ell(t)\right)^{-1}$, $\ell(t) \sim c$  as $t \rightarrow \infty$, where $c$ is a constant. Then all the properties hold with $\gamma=1$ in Example \ref{Ex: PE}.

\end{example}

\section{Simulation}\label{sec: Simulation}

In this section, we consider the random vector $\vk X= (X_1, X_2, X_3)\in \RVEC(\alpha, b, \Sigma_\rho,R)$ with identical Pareto distributed margins $(\alpha=2)$ and correlation matrix $\Sigma_\rho$ $(\rho=0.6,0.8)$ with power exponential copulas ($\gamma=1,2,3$), where $\gamma=1,2$ refers to the Laplace copula and  Gaussian copula, respectively. We verify the theoretical tail indices ($\alpha_i$) using the Hill estimators \citep{Hill1975,Beirlant2004}  on the following sets with simulated data ($n=20,000$, 
notation: $y_{(1)}\ge y_{(2)} \ge y_{(3)}$, the upper ranking of $y_1,y_2, y_3>0$):
\begin{enumerate}
    \item In $\EE_3^{(1)}$ : $A_j=\left\{\boldsymbol{y} \in \mathbb{R}_{+}^3: y_j>1\right\}$ and $A_{(1)}=\left\{\vk {y} \in \R_{+}^3: y_{(1)}>1\right\}$
with $X_j$ for $j=1,2,3$ and $X_{(1)}$.
\item 
In $\EE_3^{(2)}$: $A_{j k}=\left\{\vk {y} \in \R_{+}^3: y_j>1, y_k>1\right\}$ and $A_{(2)}=\left\{\vk{y} \in \R_{+}^3: y_{(2)}>1\right\}$ \\with $\min(X_j,X_k)$ for $1\le j \ne k \le 3$ and $X_{(2)}$.
\item
In $\EE_3^{(3)}$: $A_{(3)}=\left\{\vk {y} \in \R_{+}^3: y_{(3)}>1\right\}$ and $X_{(3)}$.
\end{enumerate}
Results of Case (1), (2), and (3) are shown in subfigures (a,c), (b,c), and (c) of Figures \ref{fig1} and \ref{fig2}, respectively. Figures \ref{fig1} and \ref{fig2} show the Hill estimates of tail indices for the Pareto risks $(\alpha=2)$ under the Laplace copula with $\rho=0.6,0.8$. Figures \ref{fig1:a} and \ref{fig2:a} illustrate the stable trends near the true value $\alpha=2$ for the marginal risks. The same trends also appear in the plots for the max risk $X_{(1)}$, see the blue lines in Figures \ref{fig1:c} and \ref{fig2:c},
supporting the theoretical result of regular variation with index $\alpha_1=2$ in $\EE_3^{(1)}$. As for $\EE_3^{(2)}$, suggested by Figures \ref{fig1:b} and  \ref{fig2:b} and the red lines in Figures \ref{fig1:c} and \ref{fig2:c}, all the tail indices are between 2 and 2.5, while the indices for $\rho=0.6$ are a bit larger than the case of $\rho=0.8$. Such results are also supported by $\alpha_2 = \sqrt{2}\alpha/(1+\rho)^{1/2}=2.2, \rho=0.6$, and $\alpha_2 =2.1, \rho=0.8$ given in Example \ref{Ex: Laplace}. The Hill estimates for the tail index of the minimum risk $X_{(3)}$ are shown with the green lines in Figures \ref{fig1:c} and \ref{fig2:c}. Both estimates are the highest in their cases, reaching approximately 2.5 for $\rho=0.6$
and close to $2.2$ for $\rho=0.8$. The theoretical values are $\alpha_3 = \sqrt{3}\alpha/(1+2\rho)^{1/2}=2.4,\rho=0.6$ and $\alpha_3 =2.2,\rho=0.8$ from  Example \ref{Ex: Laplace}, supported the simulation result.

In the cases of Pareto risks $(\alpha=2)$ with Gaussian copula and power exponential copula ($\gamma=3$), for simplicity, we only present the tail indices for $X_{(1)},X_{(2)},X_{(3)}$ in Figures \ref{fig3} and \ref{fig4}, representing Cases (1), (2), (3), respectively. The theoretical indices for the Gaussian case are $\alpha_1=2,\alpha_2=2\alpha/(1+\rho)=2.5,\alpha_3=3\alpha/(1+2\rho)=2.7, \rho=0.6$ and $\alpha_1=2,\alpha_2=2.2,\alpha_3=2.3, \rho=0.8$, which are justified by the Hill plots in Figures \ref{fig3:a} and \ref{fig3:b}. The Hill estimates for the tail index of $X_{(1)}, X_{(2)}$ are quite close to the theoretical values. Although the estimates for $X_{(3)}$ are slightly higher, the 95\% confidence intervals (not shown in the plot) still cover the theoretical results.
Similar results are observed with the power exponential copula ($\gamma=3$), where the plots in Figure \ref{fig4} align with the theoretical indices  $\alpha_1=2,\alpha_2= 2^{3/2}\alpha/(1+\rho)^{3/2}=2.8,\alpha_3=3^{3/2}\alpha/(1+2\rho)^{3/2}=3.2,\rho=0.6$ and $\alpha_1=2,\alpha_2=2.3,\alpha_3=2.5,\rho=0.8$, calculated in Example \ref{Ex: PE}.

In summary, the simulation results illustrate well the theoretical results presented for the heavy-tail risks with Laplace, Gaussian, and power exponential copulas discussed in Section \ref{sec: Ex}.

\begin{figure} [ht]
	\centering
	\subfloat[\label{fig1:a}]{
		\includegraphics[scale=0.26]{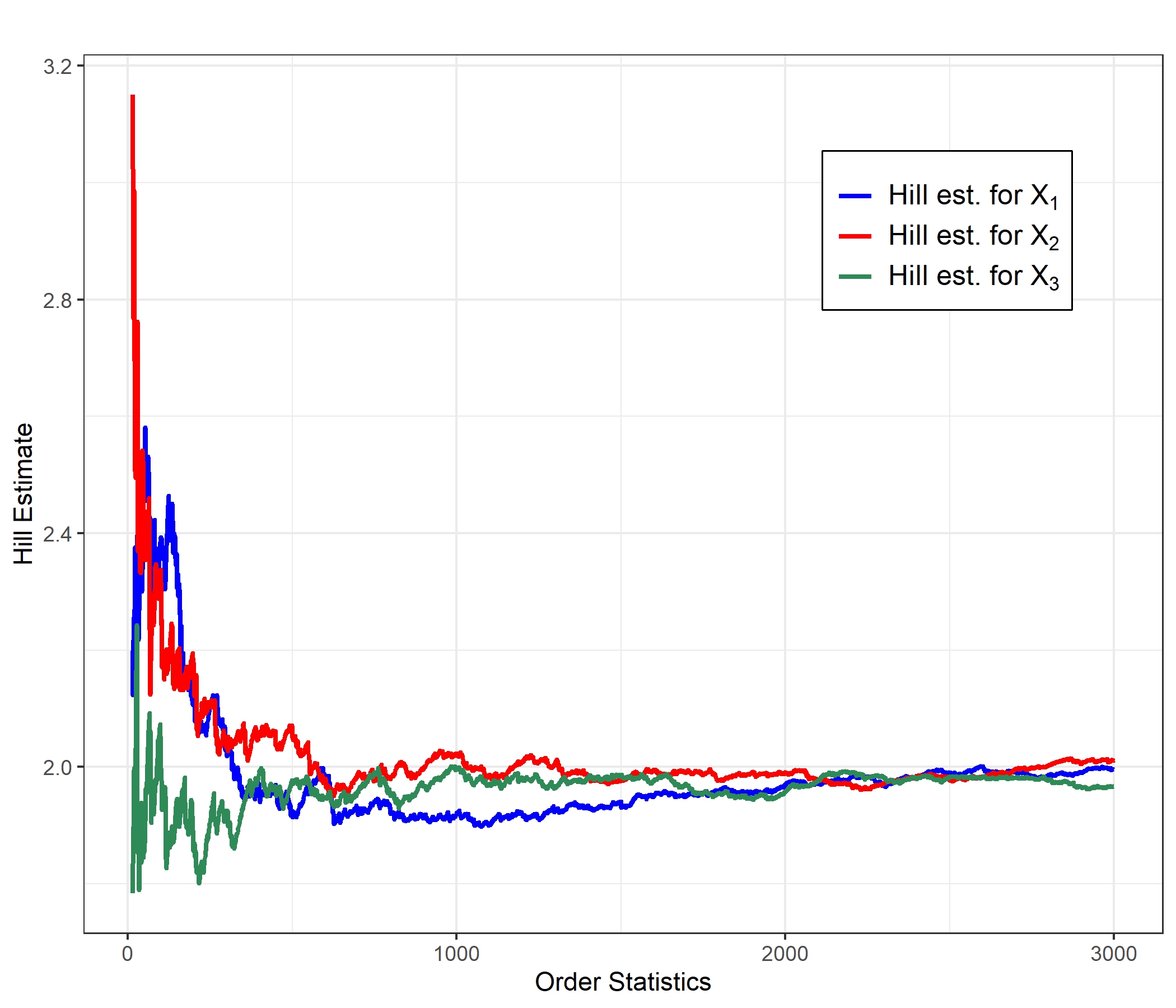}}
	\subfloat[\label{fig1:b}]{
		\includegraphics[scale=0.26]{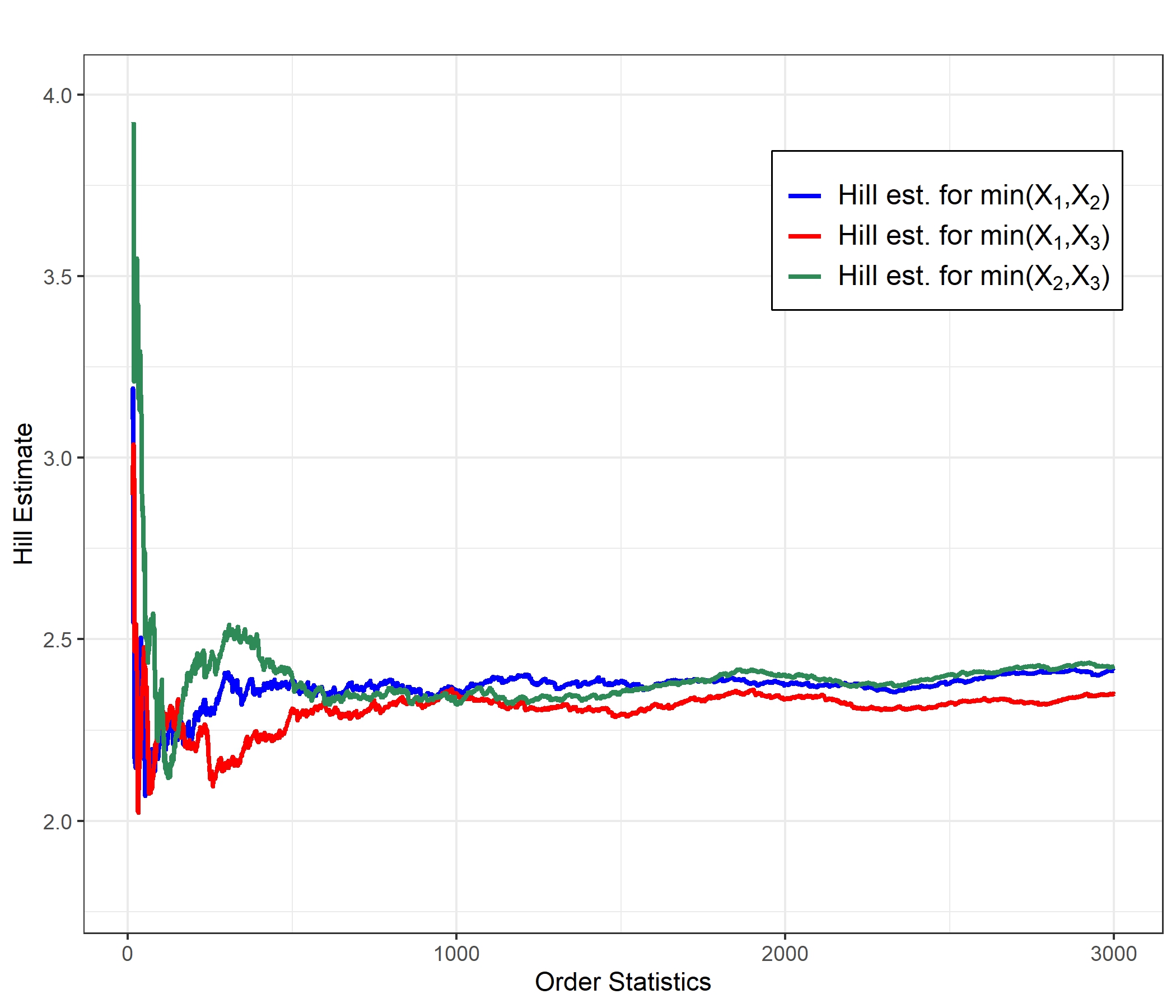}}
  	\subfloat[\label{fig1:c}]{
		\includegraphics[scale=0.26]{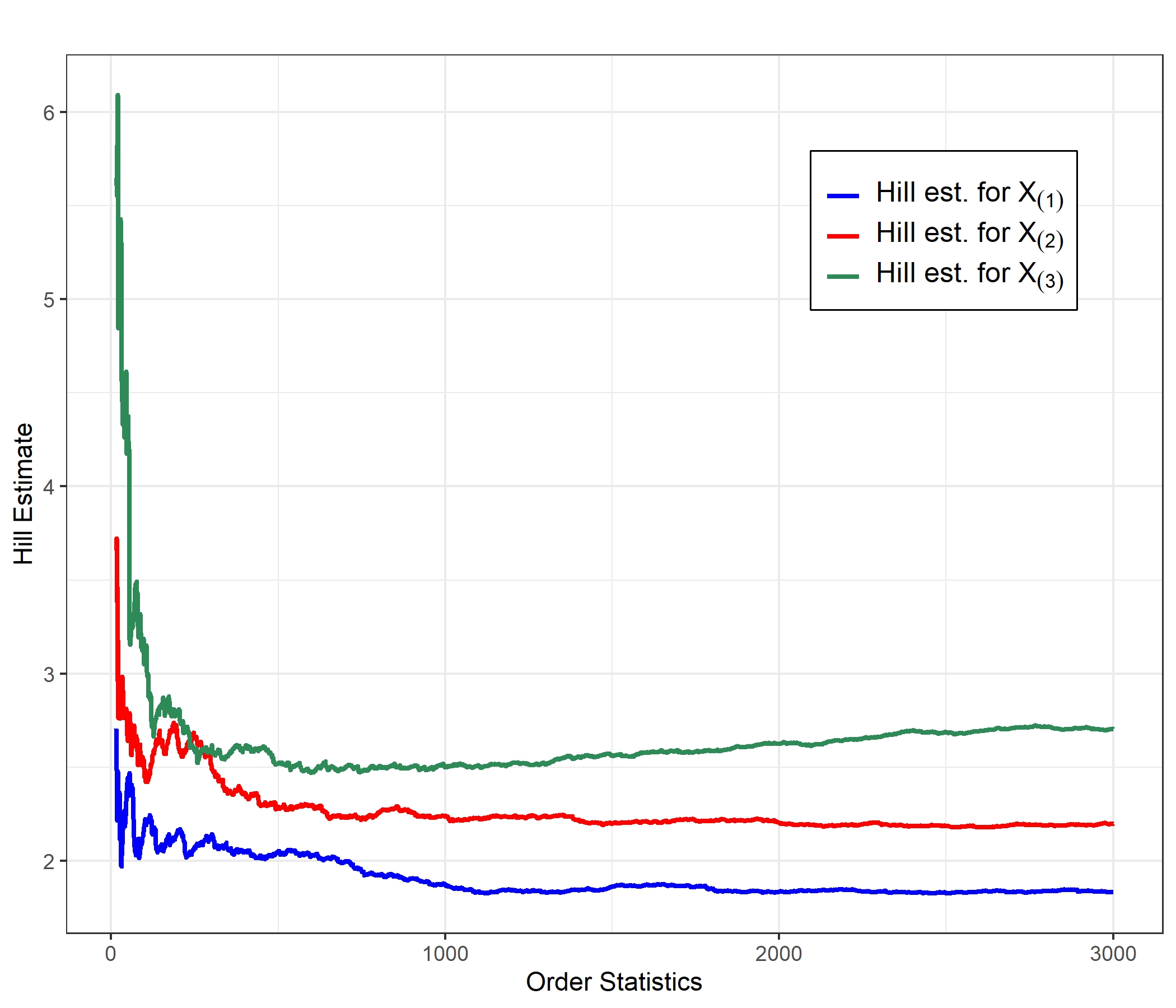}}
	\caption{Hill's plots for the Pareto risks ($\alpha=2$) with Laplace copula ($\rho=0.6$). The tail indices are $\alpha_1=2$ for marginal risks $X_j$ in (a), $\alpha_2=2.2$ for minimum of two risks $\min(X_j, X_k)$ in (b), $\alpha_1=2,\alpha_2=2.2,\alpha_3=2.4$ for the order statistics in (c).
 }\label{fig1}
\end{figure}

\begin{figure} [ht]
	\centering
	\subfloat[\label{fig2:a}]{
		\includegraphics[scale=0.26]{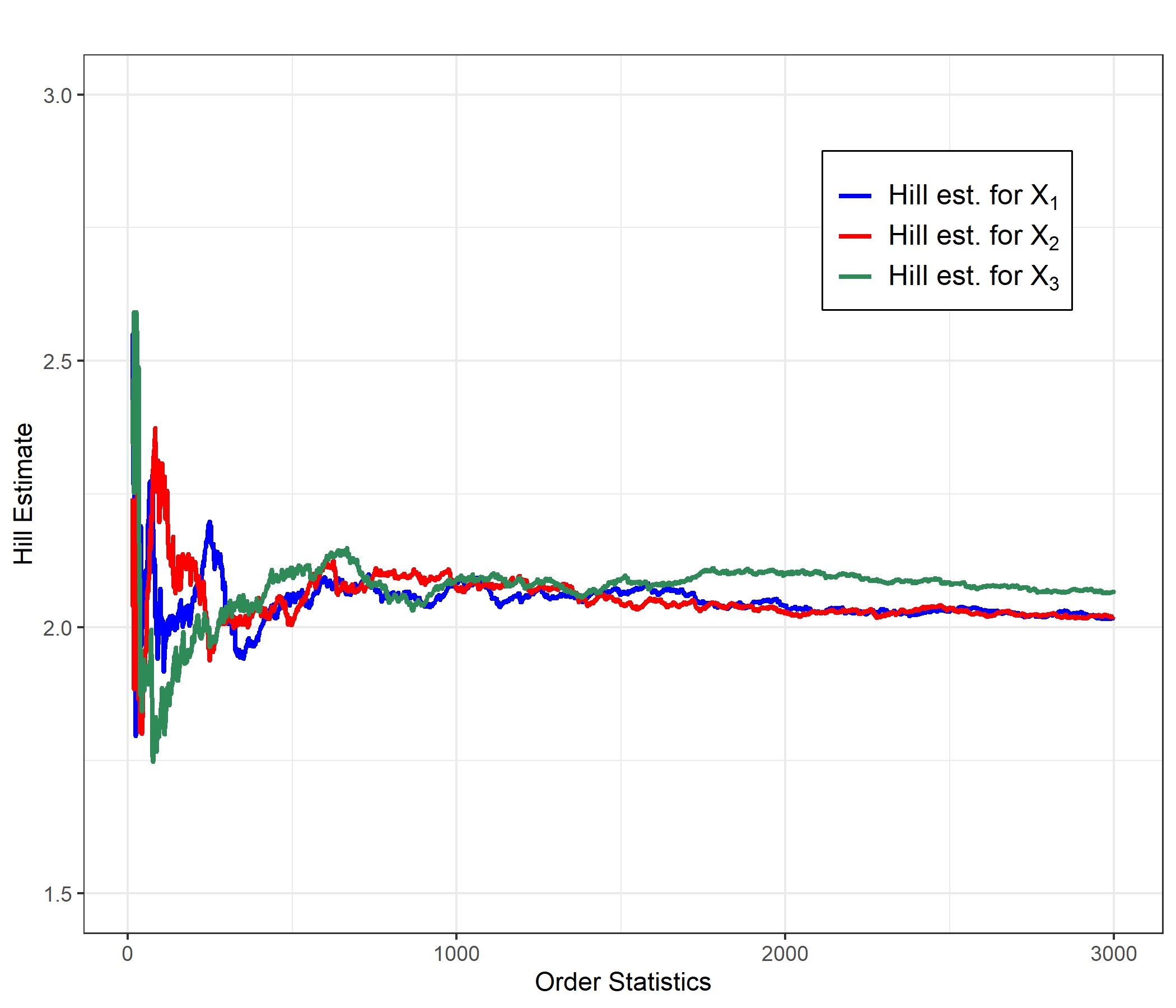}}
	\subfloat[\label{fig2:b}]{
		\includegraphics[scale=0.26]{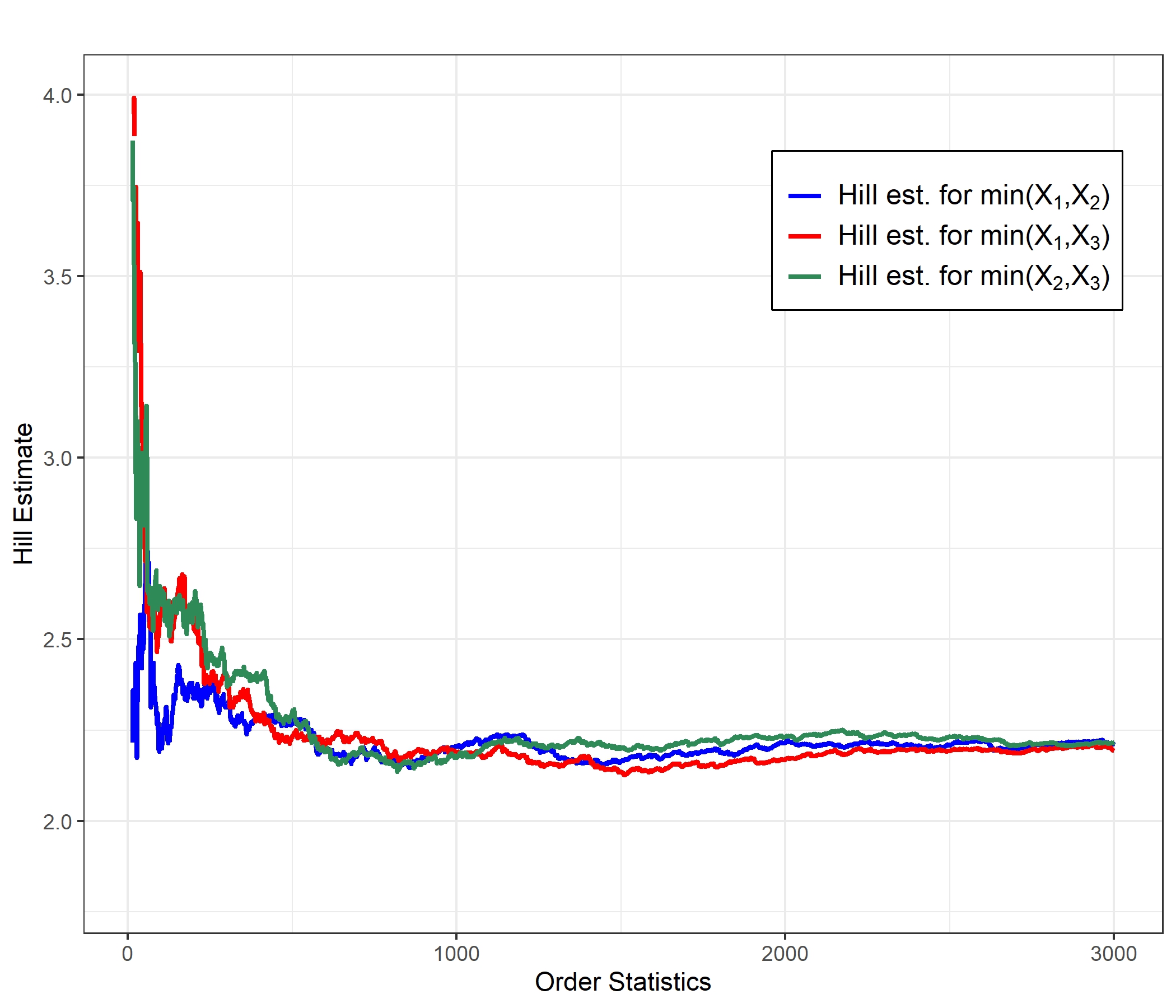}}
  	\subfloat[\label{fig2:c}]{
		\includegraphics[scale=0.26]{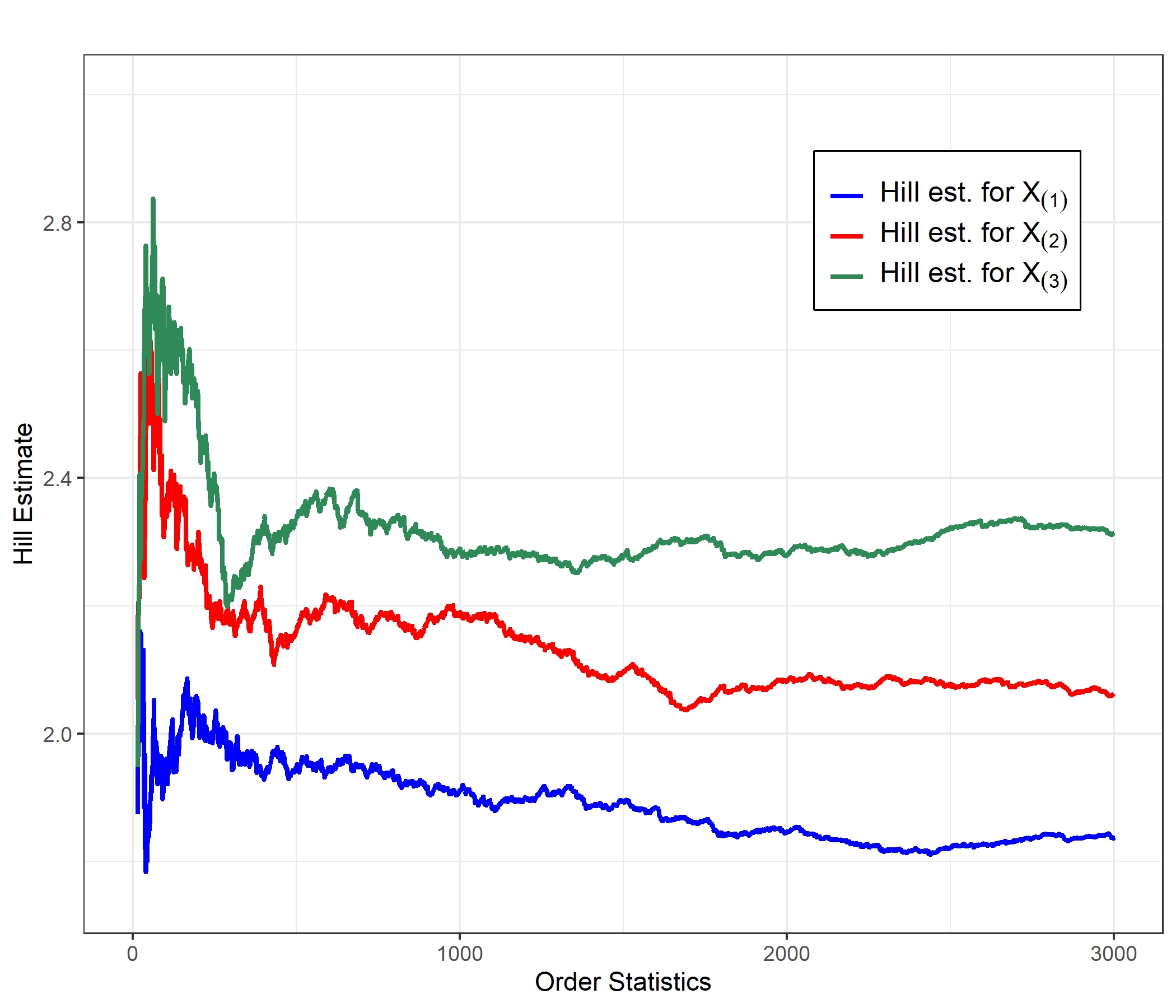}}
	\caption{Hill's plots for the Pareto risks ($\alpha=2$) with Laplace copula ($\rho=0.8$). The tail indices are $\alpha_1=2$ for marginal risks $X_j$ in (a), $\alpha_2=2.1$ for minimum of two risks $\min(X_j, X_k)$ in (b), $\alpha_1=2,\alpha_2=2.1,\alpha_3=2.2$ for the order statistics in (c).
 }\label{fig2}
\end{figure}

\begin{figure} [ht]
	\centering
	\subfloat[\label{fig3:a}]{
		\includegraphics[scale=0.35]{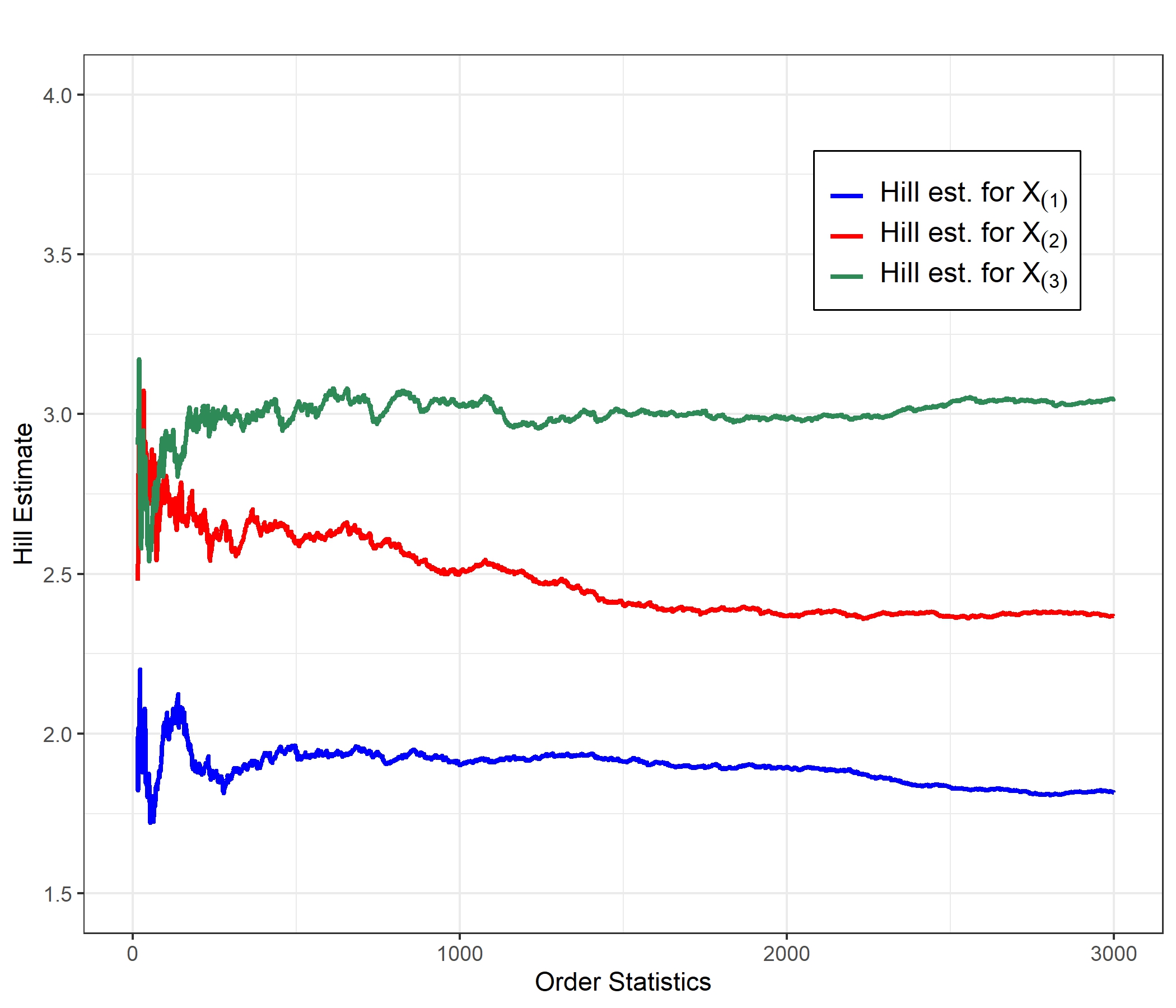}}
	\subfloat[\label{fig3:b}]{
		\includegraphics[scale=0.35]{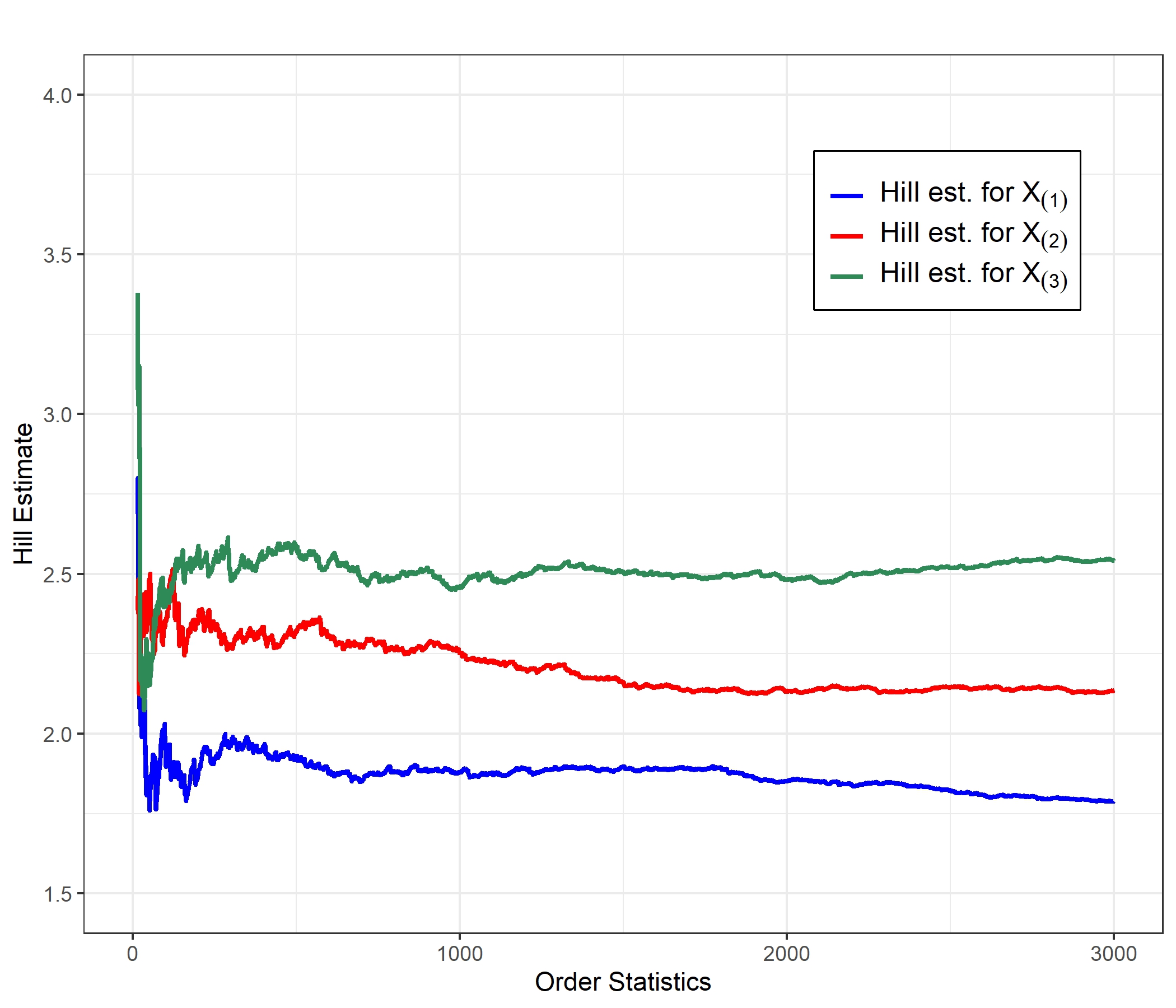}}
	\caption{Hill's plots for the Pareto risks ($\alpha=2$) with Gaussian copula, where $\rho=0.6$ in (a) and $\rho=0.8$ in (b). 
 The tail indices are $\alpha_1=2,\alpha_2=2.5,\alpha_3=2.7$ for the order statistics in (a) and $\alpha_1=2,\alpha_2=2.2,\alpha_3=2.3$ in (b).
 }\label{fig3}
\end{figure}

\begin{figure} [ht]
	\centering
	\subfloat[\label{fig4:a}]{
		\includegraphics[scale=0.35]{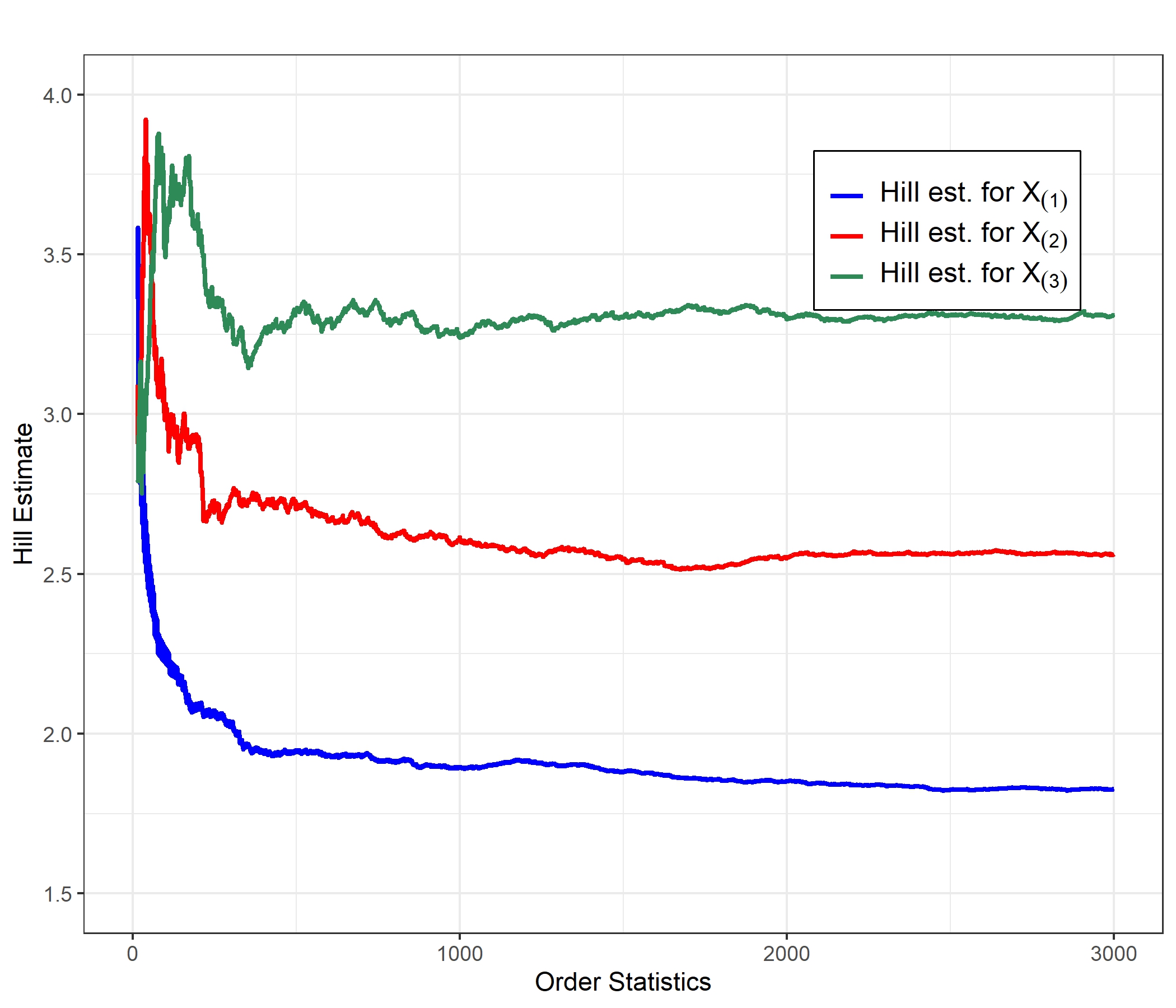}}
	\subfloat[\label{fig4:b}]{
		\includegraphics[scale=0.35]{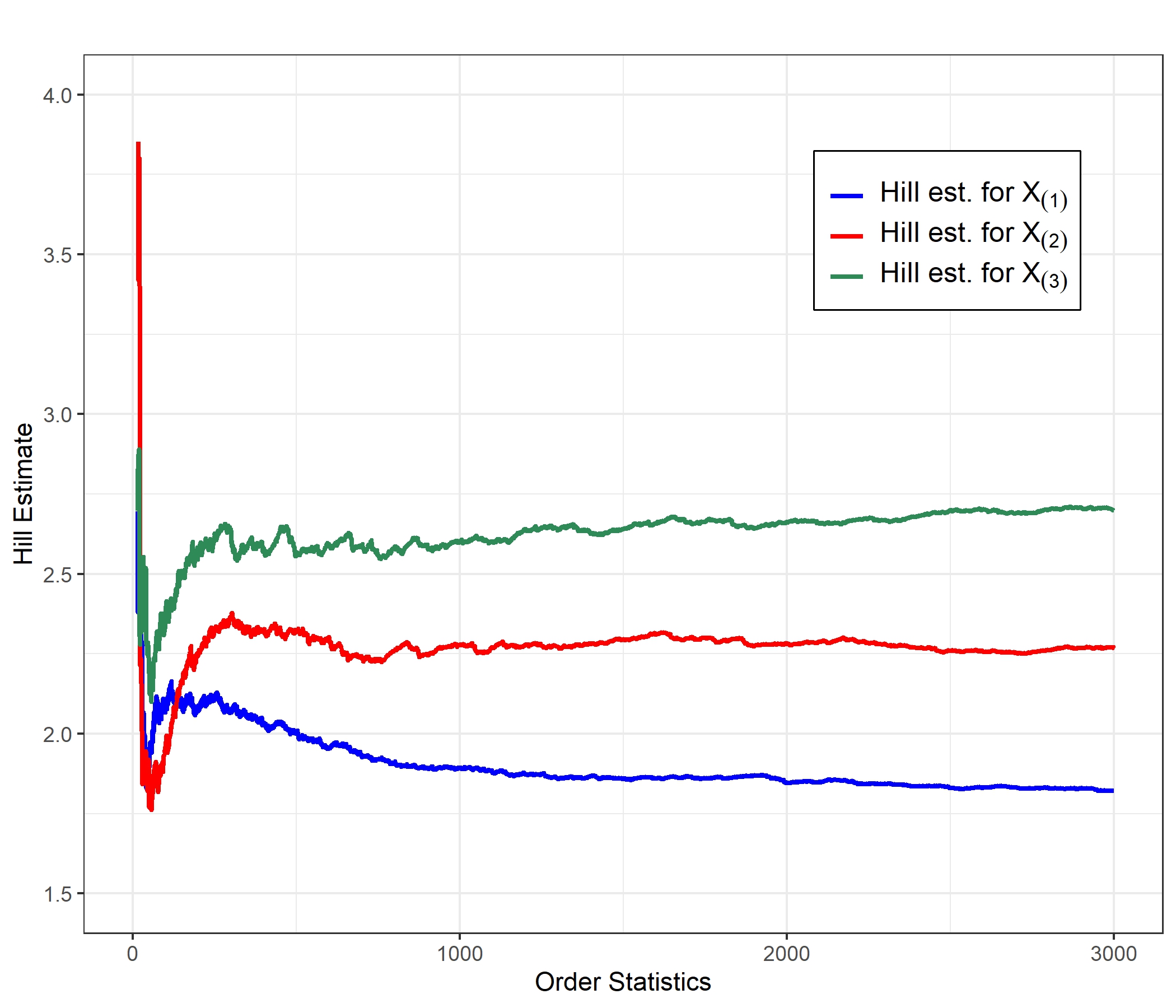}}
	\caption{Hill's plots for the Pareto risks ($\alpha=2$) with power exponential ($\gamma=3$) copula, where $\rho=0.6$ in (a) and $\rho=0.8$ in (b). The tail indices are $\alpha_1=2,\alpha_2=2.8,\alpha_3=3.2$ for the order statistics in (a) and $\alpha_1=2,\alpha_2=2.3,\alpha_3=2.5$ in (b).
 }\label{fig4}
\end{figure}

\newpage
\section{Real data applications}\label{sec: app}

We consider the Danish fire insurance data \citep{embrechts2013modelling} collected at Copenhagen Reinsurance and comprise 2167 fire losses (building, contents and profits) over the period 1980 to 1990. The losses are adjusted for inflation to reflect 1985 values and are expressed in millions of Danish Krone, and the dataset is available in R-\textit{fitdistrplus} package \citep{fitdistrplus}.

Figure \ref{fig5:a} indicates the tail indices for the building loss (left), content loss (middle) and the minimum of the two losses (right), where the estimated $\alpha$ for the individual losses are close to or less than 2, and the $\alpha_2$ for the minimum loss seems to be equal or slightly above 2. Based on these findings, the correlation between claims for building and content is estimated at a pretty high value ($\rho \ge 0.7$) for the Gaussian copula studied in \citet[Example 6.2]{Das2023inference}. However, the scatter plot (left in Figure \ref{fig5:b}) fails to suggest such a high positive association. 

As a generalization of the Gaussian copula, the elliptical copula with the Weibullian-type radius discussed in this paper 
\cl{show a clear evidence of} high dependence strength. Theorem \ref{Thm: MRV} and Example \ref{Ex: PE} suggest that $\rho = 2(\alpha/\alpha_2)^{2/\gamma} - 1$, allowing us to switch from the Gaussian copula ($\gamma=2$) to copulas with smaller $\gamma$ to achieve lower correlation values. For instance, we consider the Laplace copula ($\gamma=1$) and visualize the differences in scatter plots of simulated data. As suggested by the Hill plots, $\widehat{\alpha}=1.8$ for the building and content losses and $\widehat{\alpha}_2=2.2$ for the minimum. The estimates of correlations for the Gaussian and Laplace copulas are  $\widehat{\rho}_{G}=2\widehat{\alpha}/\widehat{\alpha}_2-1=0.64$ and $\widehat{\rho}_{L}=2(\widehat{\alpha}/\widehat{\alpha}_2)^2-1=0.36$, respectively. Simulated data, generated with Pareto ($\alpha=\widehat{\alpha}=1.8$) losses under Gaussian copula ($\rho_{G}=\widehat{\rho}_{G}$) and Laplace copula ($\rho_{LC}=\widehat{\rho}_{LC}$), with the same sample size ($n=2167$) as the real data, is then shown in Figure \ref{fig5:b}. The scatter plots show that the simulated losses generated from the Laplace copula align more closely with the real data (right in Figure \ref{fig5:b}). In contrast, the data generated from the Gaussian copula (middle in Figure \ref{fig5:b}) displays fewer observations along the axes and indicates a stronger dependence than the real data. 

Although our analysis suggests that the dependence structure in the insurance data may extend beyond the Gaussian copula to include elliptical copulas, the result only relies on visual inspection of scatter plots. For a more rigorous estimation of the parameter $\gamma$ and correlation, further investigation requires advanced statistical methods and inference procedures.

\begin{figure} [ht]
	\centering
	\subfloat[\label{fig5:a}]{
		\includegraphics[scale=0.7]{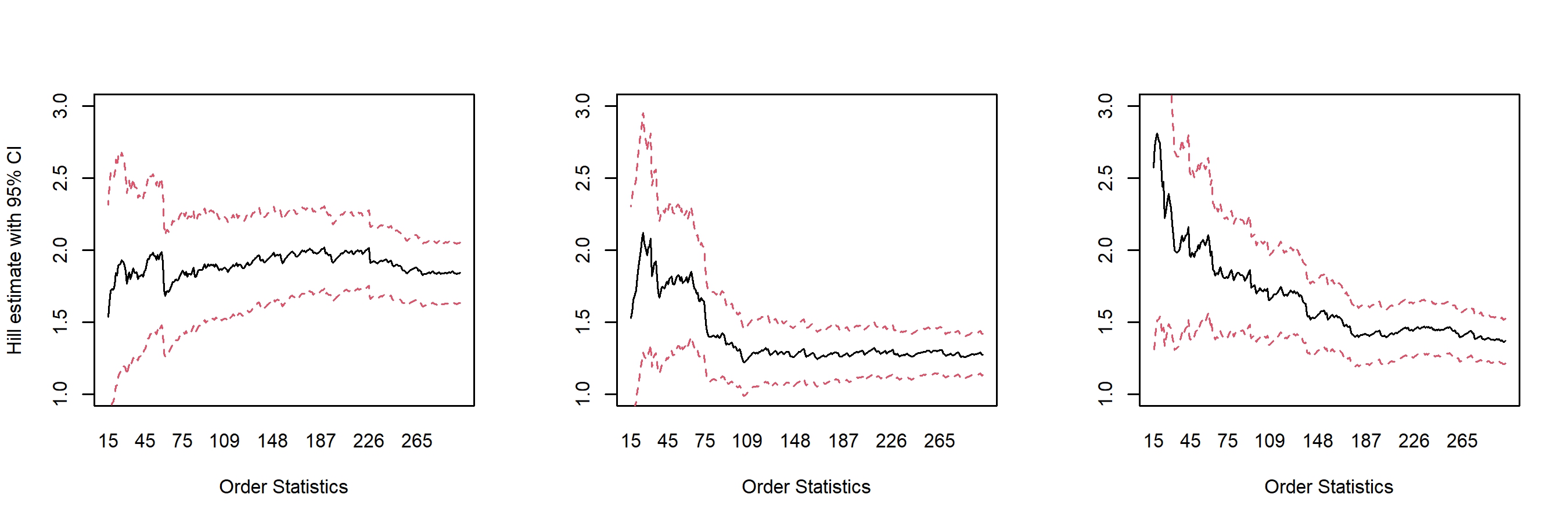}}\\
	\subfloat[\label{fig5:b}]{
		\includegraphics[scale=0.7]{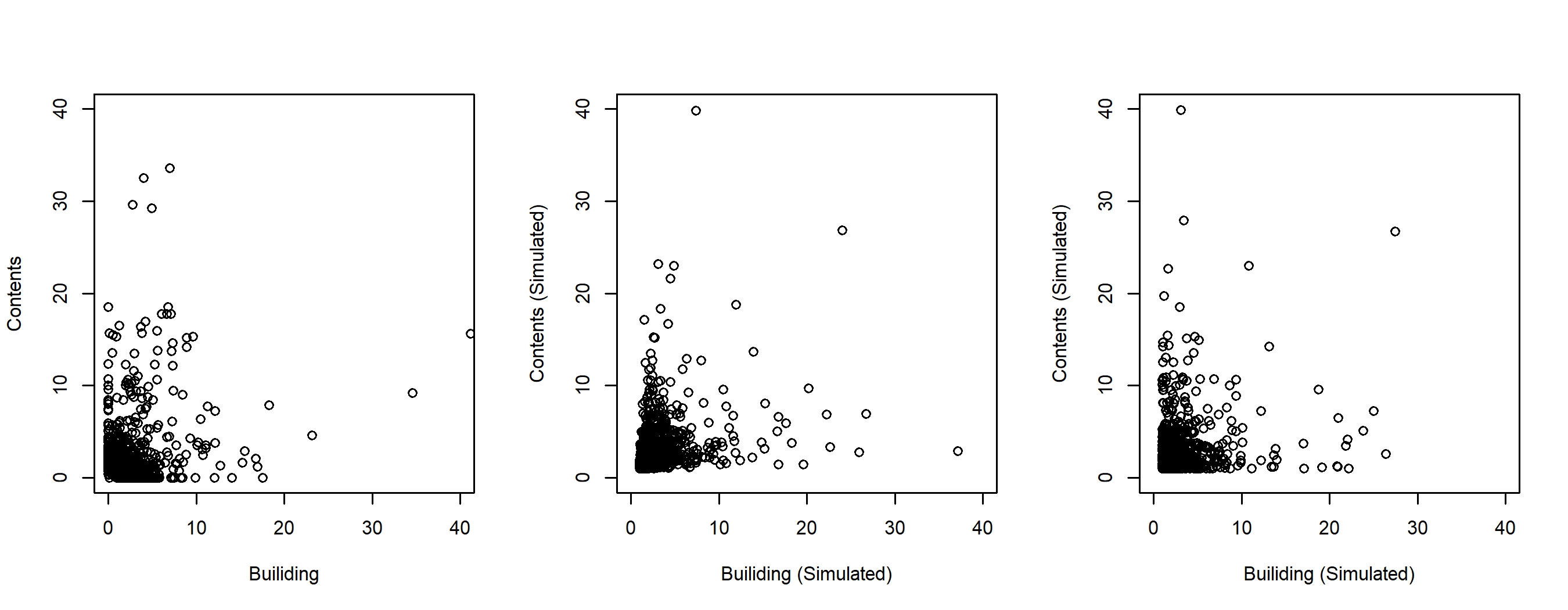}}
	\caption{Danish fire insurance data: (a) Hill estimates of tail indices of the building (left) and content (middle) and the minimum of them (right) with 95\% confidence intervals. (b) Scatter plots for real building and content losses (left), the simulated losses under Gaussian copula (middle), and the simulated losses under Laplace copula (right).
 }
\end{figure}

\section{Conclusion}\label{sec: con}
In conclusion, our study focuses on the probability of tail sets of regularly varying risks under the elliptical copula with a Weibullian-type radius. We provide the asymptotic expansion of the tail probability, demonstrating the connection between the rate of decay in the probability for joint events and the parameters in the elliptical copulas. Additionally, we characterize the multivariate regular variation on the sub-cones, provide illustrative examples and verify the theoretical result through simulations. These findings are valuable for risk measures, risk management and computing tail probabilities for various extreme tail events, with wide applications in finance, insurance, environment and biology.
Furthermore, as a generalization of the Gaussian case, we highlight the potential improvement in the fit of insurance data with elliptical copulas (e.g., Laplace copula). Further work is promising in developing statistical tools for the tail  and dependence parameter involved in the heavy tailed risk with elliptical copula, as well as the tail study with asymmetric elliptical dependence.

\section{Proofs}\label{sec: proof}
\begin{lemma}\label{Lemma: RtoZ}
Let ${\vk Z} \stackrel{d}{=} R A{\bf U}$ be an elliptical random vector with Weibullian-type $R\in \mbox{GMDA}(w,\gamma-1)$, satisfying Eq.\eqref{R} with $w(u) = \gamma L u^{\gamma-1}$. Then \cl{all margins are identically distributed satisfying $Z_1\in \mbox{GMDA}(w,\gamma-1)$}, and
\begin{eqnarray}\label{Z_j}
\pk{Z_1>u} &\sim& \frac{1}{2}\frac{\Gamma(d/2)}{\Gamma(1/2)}C \fracl{\gamma L}{2}^{-(d-1)/2} u^{\beta-\gamma(d-1)/2}\exp(-Lu^{\gamma}) \notag\\
&=:&{C'}u^{\beta'} \exp(-L'u^{\gamma'}),
\end{eqnarray}
where
\begin{eqnarray*}
C'= \frac{\Gamma(d/2)}{2\Gamma(1/2)}C \fracl{\gamma L}{2}^{-(d-1)/2}, \ \  \beta'= \beta-\gamma(d-1)/2, \ \  L' = L, \ \ \gamma' = \gamma.
\end{eqnarray*}
\end{lemma}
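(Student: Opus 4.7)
The plan is to reduce the marginal tail to a one-dimensional integral in $R$ and a scalar radial component of $\vk U$, and then apply a Laplace-type asymptotic analysis near the peak.

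First, I would exploit the stochastic representation. Writing $\vk a_1^\top$ for the first row of $A$, we have $Z_1 \stackrel{d}{=} R\,\vk a_1^\top \vk U$, and $\|\vk a_1\|^2 = (AA^\top)_{11} = \Sigma_{11} = 1$ since $\Sigma$ is a correlation matrix. By the rotational invariance of the uniform law on $S_d$, $\vk a_1^\top \vk U$ has the same distribution as the first coordinate $U_1$, whose density on $[-1,1]$ is
\begin{eqnarray*}
f_{U_1}(t) \;=\; \frac{\Gamma(d/2)}{\Gamma(1/2)\,\Gamma((d-1)/2)}\,(1-t^2)^{(d-3)/2}.
\end{eqnarray*}
By symmetry the contribution from $U_1<0$ to $\pk{Z_1>u}$ is negligible for large $u$, so it remains to analyse
\begin{eqnarray*}
\pk{Z_1>u} \;=\; (1+o(1))\int_0^1 \pk{R>u/t}\,f_{U_1}(t)\,\d t.
\end{eqnarray*}

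Next, I would carry out a standard Laplace-type expansion, since $\pk{R>u/t}\sim C(u/t)^\beta \exp(-L(u/t)^\gamma)$ is sharply concentrated near $t=1$. Substituting $s = L\gamma u^\gamma(t^{-\gamma}-1)$ (so that $t\to 1^-$ corresponds to $s\to 0^+$) gives $\d t = -(1+o(1))\,\d s/(L\gamma u^\gamma)$, $1-t^2 \sim 2s/(L\gamma u^\gamma)$, $(u/t)^\beta \sim u^\beta$, and the exponential becomes $\exp(-Lu^\gamma)\exp(-s)$. Substituting,
\begin{eqnarray*}
\pk{Z_1>u} \;\sim\; C u^\beta e^{-Lu^\gamma}\,\frac{\Gamma(d/2)}{\Gamma(1/2)\Gamma((d-1)/2)}\,\frac{2^{(d-3)/2}}{(L\gamma)^{(d-1)/2}\, u^{\gamma(d-1)/2}}\int_0^\infty s^{(d-3)/2}e^{-s}\,\d s.
\end{eqnarray*}
The remaining integral is exactly $\Gamma((d-1)/2)$, which cancels a factor in the denominator; collecting the powers of $2$, $L\gamma$, and $u$ then yields the stated constant $C' = \tfrac{\Gamma(d/2)}{2\Gamma(1/2)}C(\gamma L/2)^{-(d-1)/2}$ and exponent $\beta' = \beta-\gamma(d-1)/2$.

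Finally, I would record the Gumbel MDA membership: since $\pk{Z_1>u}$ retains the form $(\text{power})\cdot\exp(-Lu^\gamma)$, the same scaling $w(u)=\gamma L u^{\gamma-1}$ that worked for $R$ also works for $Z_1$, so $Z_1 \in \mbox{GMDA}(w,\gamma-1)$ by a direct check of \eqref{Eq: def-Gumbel}. The main technical obstacle is justifying the Laplace approximation rigorously: one must confirm that the tail beyond a shrinking neighbourhood of $t=1$ is negligible, which follows from uniform convergence of the Weibullian asymptotics (e.g.\ a Potter-type bound applied to $u\mapsto u^{-\beta}e^{Lu^\gamma}\pk{R>u}$) and from bounding the contribution of $U_1$ bounded away from $1$ by a term of order $\exp(-Lu^\gamma(1+\delta)^\gamma)$ for some $\delta>0$.
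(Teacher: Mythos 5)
Your proof is correct and takes a genuinely different route from the paper. The paper reduces to $Z_j^2 \stackrel{d}{=} R^2 \mathcal{B}_{1/2,(d-1)/2}$ and then invokes known results on products of a Gumbel-MDA variable with a Beta variable (Theorem 1.1 of \citet{debicki2018extremes} and Theorem 4.1 of \citet{HASHORVA2010}), essentially outsourcing the Laplace analysis. You instead use $Z_1 \stackrel{d}{=} RU_1$ directly, write out the density of $U_1$, and carry out the Laplace-type expansion by hand. Your approach is more elementary and self-contained (no external theorems to verify are applicable), at the cost of needing the rigor footnote you flag at the end; the paper's approach is shorter but requires checking the hypotheses of the cited results.

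Two small corrections. First, the definition of your substitution variable has a stray $\gamma$: for the exponential to factor as $\exp(-Lu^\gamma)\exp(-s)$ you need $s = Lu^\gamma(t^{-\gamma}-1)$, not $s = L\gamma u^\gamma(t^{-\gamma}-1)$. The rest of your displayed asymptotics ($\d t \sim -\d s/(L\gamma u^\gamma)$, $1-t^2 \sim 2s/(L\gamma u^\gamma)$) and the final constant are in fact consistent with the corrected definition, so this is a typo rather than an error that propagates. Second, a minor imprecision: for $u>0$ the event $\{RU_1>u\}\cap\{U_1\le 0\}$ is empty since $R>0$, so the restriction to $t\in(0,1]$ is exact, not merely up to $(1+o(1))$.
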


\begin{proof}
For the Weibullian-type radius $R\in \mbox{GMDA}(w,\gamma-1)$ with $w(u) = \gamma L u^{\gamma-1}$, and $\pk {R>u} \sim Cu^\beta \exp(-Lu^\gamma)$ for large $u$, we have $R^2\in \mbox{GMDA}(w',\gamma/2-1)$ with $w'(u) = \gamma L u^{\gamma/2-1}$ and 
\begin{eqnarray}\label{R2}
   \pk {R^2>u} \sim  Cu^{\beta/2}\exp(-Lu^{\gamma/2}). 
\end{eqnarray}
In view of \citet[Eq.(4.21)]{Hashorva2007}, the marginal risk $Z_j^2 \stackrel{d}{=} R^2  \mathcal B_{1/2,(d-1)/2}$ for all $j=1,\ldots, d$. Here $\mathcal{B}_{1/2, (d-1)/2}$ denotes the Beta random variable with parameters $1/2$ and $(d-1)/2$.
It follows further by \citet[Theorem 1.1]{debicki2018extremes} and \citet[Theorem 4.1]{HASHORVA2010} that
\begin{eqnarray*}
\pk{Z_j^2>u} &\sim& \frac{\Gamma(d/2)}{\Gamma(1/2)}\fracl{\gamma L}{2}^{-(d-1)/2}u^{-\gamma(d-1)/4}\pk{R^2>u}\\ 
&\stackrel{\eqref{R2}}\sim& \frac{\Gamma(d/2)}{\Gamma(1/2)} C \fracl{\gamma L}{2}^{-(d-1)/2} u^{-\gamma(d-1)/4+\beta/2} \exp(-Lu^{\gamma/2}),
\end{eqnarray*}
and $Z_j\in \mbox{GMDA}(w,\gamma-1)$. Noting further that $Z_j \stackrel{d}{=} RA U$ is symmetric about 0, we complete the proof with
\begin{eqnarray*}
\pk{Z_j>u} &\sim& \frac{1}{2}\frac{\Gamma(d/2)}{\Gamma(1/2)}C \fracl{\gamma L}{2}^{-(d-1)/2} u^{\beta-\gamma(d-1)/2}\exp(-Lu^{\gamma}).
\end{eqnarray*}
\end{proof}
\kai{In the following, we first present Lemma \ref{Lemma: quantile} for the asymptotic quantile function  of the elliptical marginal distribution, which will be used  to derive the transformed threshold  in Lemma \ref{Lemma: threshold}.}
\begin{lemma}\label{Lemma: quantile}
Let $\vk Z$ be an elliptical random vector defined in Lemma \ref{Lemma: RtoZ} with $\overline G_1 (u) = \pk{Z_1>u} \sim C'u^{\beta'}\exp(-Lu^\gamma), j \in \mathbb{I}$. It holds for $U(t) := 
G_1^\leftarrow(1-t^{-1})$ that
\begin{eqnarray*}
U(t) = \left(\frac{\log t}{L}\right)^{1/\gamma} +  \frac1{\gamma L}\fracl{\log t}{L}^{1/\gamma-1} \frac{\beta'}{\gamma}\left(\log (C^{'\gamma/{\beta'}}L^{-1}) +\log \log t \right) +o\left((\log t)^{1/\gamma-1}\right).
\end{eqnarray*}
\end{lemma}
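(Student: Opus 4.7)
\textbf{Proof proposal for Lemma \ref{Lemma: quantile}.}
The plan is to invert the tail asymptotic $\overline G_1(u)\sim C' u^{\beta'}\exp(-Lu^\gamma)$ by a standard bootstrap/iteration argument. By definition, $U(t)$ satisfies $\overline G_1(U(t))=1/t$, so taking logarithms in the tail equivalence yields
\begin{equation*}
L\,U(t)^\gamma \;=\; \log t \;+\; \log C' \;+\; \beta'\log U(t) \;+\; o(1), \qquad t\to\infty.
\end{equation*}
First I would extract the leading order from this identity: dividing by $\log t$ forces $U(t)^\gamma/\log t\to 1/L$, i.e.\ $U(t)\sim(\log t/L)^{1/\gamma}$. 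In particular $\log U(t)=\frac{1}{\gamma}\log\log t-\frac{1}{\gamma}\log L + o(1)$, which I substitute back into the display above.

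Next I would refine the expansion by writing $U(t)=(\log t/L)^{1/\gamma}(1+\varepsilon(t))$ with $\varepsilon(t)\to 0$, so that $L\,U(t)^\gamma = \log t\,(1+\varepsilon(t))^\gamma = \log t + \gamma\varepsilon(t)\log t + O(\varepsilon(t)^2\log t)$. Equating with the previous display and using the refined $\log U(t)$ gives
\begin{equation*}
\gamma\,\varepsilon(t)\log t \;=\; \log C' \;+\; \frac{\beta'}{\gamma}\bigl(\log\log t - \log L\bigr) \;+\; o(1),
\end{equation*}
so $\varepsilon(t)=\frac{1}{\gamma\log t}\bigl[\log C'+\frac{\beta'}{\gamma}(\log\log t-\log L)\bigr]+o(1/\log t)$. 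Multiplying by $(\log t/L)^{1/\gamma}$ produces the second order term at scale $(\log t)^{1/\gamma-1}$, and a routine algebraic rearrangement (using $\log C'-\frac{\beta'}{\gamma}\log L = \frac{\beta'}{\gamma}\log(C'^{\,\gamma/\beta'}L^{-1})$) puts it in the exact form advertised in the statement.

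The only mildly delicate point is legitimizing the manipulation of the $o(1)$ error coming from the tail equivalence $\overline G_1(u)\sim C'u^{\beta'}\exp(-Lu^\gamma)$: one needs that the $o(1)$ remainder in $\log\overline G_1(u) = \log C'+\beta'\log u-Lu^\gamma+o(1)$ is genuinely $o(1)$ as $u\to\infty$, which follows from the assumed asymptotic relation. Given this, the iteration is self-contained and the second order remainder $o((\log t)^{1/\gamma-1})$ is controlled by absorbing the $\varepsilon(t)^2\log t = O(1/\log t)$ contribution after multiplication by $(\log t)^{1/\gamma}$. I expect the main bookkeeping obstacle is simply keeping track of which logarithmic terms belong at which order; no new ideas beyond a two-step Newton-style inversion of the Weibullian tail are required.
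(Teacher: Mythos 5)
Your proposal follows the same two-step inversion the paper's proof uses: take logarithms of the tail relation, extract the leading order $U(t)\sim(\log t/L)^{1/\gamma}$, substitute $\log U(t)=\tfrac{1}{\gamma}\log\log t-\tfrac{1}{\gamma}\log L+o(1)$ back in, and linearize $(1+\varepsilon(t))^{\gamma}$ to solve for the correction — your explicit ansatz $U(t)=(\log t/L)^{1/\gamma}(1+\varepsilon(t))$ is just a reformulation of the paper's use of $(1+x)^a\sim 1+ax$. One small imprecision in the error bookkeeping: since $\varepsilon(t)=O(\log\log t/\log t)$ rather than $O(1/\log t)$, the quadratic remainder $\varepsilon(t)^2\log t$ is $O\bigl((\log\log t)^2/\log t\bigr)$, not $O(1/\log t)$; it is nevertheless $o(1)$, so the claimed $o\bigl((\log t)^{1/\gamma-1}\bigr)$ error term still comes out correctly.
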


\begin{proof}
Recalling $U(t) := \overline G_1^\leftarrow(t^{-1})$, we have $\overline G_1\left(U(t)\right) = 1/{t}$. Taking logarithm in both sides, we have
\begin{equation}\label{Eq.1}
  L (U(t))^\gamma  - \log C' -{\beta'} \log U(t) =  \log t,  
\end{equation}
indicating that $U(t)\to\infty$ and ${L (U(t))^\gamma} \sim {\log t}$. Taking a logarithm again on the above equation yields that 
\begin{eqnarray*}
\log L + \gamma \log U(t) - \log \log t \to 0. 
\end{eqnarray*}
Plugging $\log U(t)$ above into Eq.\eqref{Eq.1}, we have
\begin{eqnarray*}
 L (U(t))^\gamma &=& \log t  + \log C' + \frac{{\beta'}}{\gamma}(\log \log t - \log L) + o(1),
 \end{eqnarray*}
 An application of $(1+x)^a \sim 1 + a x (1+o(1))$ for $x\to0$ gives 
 \begin{eqnarray*}
U(t) &=& \left(\frac{\log t}{L}\right)^{1/\gamma} \left(1+ \frac{\log C' + \frac{{\beta'}}{\gamma}(\log \log t - \log L)}{\gamma \log t} + o\fracl{1}{\log t} \right).
\end{eqnarray*}
We complete the proof of Lemma \ref{Lemma: quantile}. 
\end{proof}

\begin{proof}[Proof of Lemma \ref{Lemma: threshold}]
For fixed $x>0$ and $\overline F_0 (t) =  \left(t^\alpha \ell(t) \right)^{-1} \in \RV_{-\alpha},$ with $\alpha>0$, applying Lemma \ref{Lemma: quantile} implies the decomposition of quantile 
\begin{eqnarray*}
z_{x, c}(t) =\overline G_1^\leftarrow(\left[(tc^{-1/\alpha}{x})^\alpha \ell(tx) \right]^{-1})= U\left((tc^{-1/\alpha}{x})^\alpha \ell(tx) \right) =: A_t+B_t+C_t,
\end{eqnarray*}
where $C_t = o\left((\log t)^{1/\gamma-1}\right)$ and 
\begin{eqnarray*}
A_t &=& \left(\frac{\alpha\log (tc^{-1/\alpha}{x})+\log \ell(tx)}{L}\right)^{1/\gamma}\\
    &=& \left(\frac{\alpha\log t}{L}\right)^{1/\gamma} \left[ 1+ \frac{(\alpha/L) \log (c^{-1/\alpha}{x}) + (1/L) \log \ell(tx)}{(\alpha/L) \log t}\right]^{1/\gamma} \\
    &=& \left(\frac{\alpha\log t}{L}\right)^{1/\gamma} \left[ 1+ \frac{1}{\gamma}\frac{(\alpha/L) \log (c^{-1/\alpha}{x}) + (1/L) \log \ell(tx)}{(\alpha/L) \log t} + o\fracl{1}{\log t}\right]\\
    &=& \left(\frac{\alpha\log t}{L}\right)^{1/\gamma} + \frac{1}{\gamma L} \frac{\alpha\log (c^{-1/\alpha}{x}) + \log \ell(t)}{ \left( \frac{\alpha \log t}{L} \right)^{1-1/\gamma}} + o \fracl{1 + \log\ell(t)}{(\log t)^{1-1/\gamma}}.
\end{eqnarray*}
In the last step, we use $\log \ell(tx) = \log \ell(t) +o(1)$. Analogously, the second term $B_t$ is given by
\begin{eqnarray*}
B_t &=& \frac1{\gamma L} \frac{{\beta'}}{\gamma}\frac{\log (C^{'\gamma/{\beta'}}L^{-1}) +\log \alpha + \log \log (tc^{-1/\alpha}{x}\ell(tx)^{1/\alpha})}{ \left[\frac{\alpha\log (tc^{-1/\alpha}{x})+\log \ell(tx)}{L}\right]^{1-1/\gamma}} \\
    &=& \frac{\beta'}{\gamma^2 L} \frac{\log (\alpha C^{'\gamma/{\beta'}}L^{-1}) + \log \log t}{ \left(\frac{\alpha\log t}{L}\right)^{1-1/\gamma}} + o\fracl{1}{(\log t)^{1-1/\gamma}}.
\end{eqnarray*}
Combining all the three parts $A_t\sim C_t$, we have
\begin{eqnarray*}
z_{x, c}(t) &=& \left(\frac{\alpha\log t}{L}\right)^{1/\gamma}  + \frac{1}{\gamma L}\frac{\log (c^{-1}x^\alpha) }{ \left( \frac{\alpha \log t}{L} \right)^{1-1/\gamma}} + \frac{1}{\gamma L} \frac{\log (C' \alpha^{{\beta'}/\gamma} L^{-{\beta'}/\gamma}) }{ \left(\frac{\alpha\log t}{L}\right)^{1-1/\gamma}}\\
&& + \frac{1}{\gamma L} \frac{\log \ell(t)}{ \left( \frac{\alpha \log t}{L} \right)^{1-1/\gamma}} +  \frac{1}{\gamma L} \frac{\log (\log t)^{{{\beta'}/\gamma}}}{ \left(\frac{\alpha\log t}{L}\right)^{1-1/\gamma}} +o \fracl{1}{(\log t)^{1-1/\gamma}}\\
&=&  \left(\frac{\alpha\log t}{L}\right)^{1/\gamma} + \frac{\log \left((c^{-1}x^\alpha C' \alpha^{{\beta'}/\gamma} L^{-{\beta'}/\gamma})^{1/\gamma L}\right) }{ \left( \frac{\alpha \log t}{L} \right)^{1-1/\gamma}} \\
&& + \frac{\log \left( [\ell(t) (\log t)^{{{\beta'}/\gamma}}]^{1/\gamma L} \right)}{ \left( \frac{\alpha \log t}{L} \right)^{1-1/\gamma}} +o \fracl{1}{(\log t)^{1-1/\gamma}}.
\end{eqnarray*}
We complete the proof of Lemma \ref{Lemma: quantile}.
\end{proof}

To show Proposition \ref{Proposition: type I}, we begin by adapting the result for type I elliptical random vectors from \citet[Theorem 3.1]{Hashorva2007} to accommodate the threshold $t\vk 1+\vk c$ and the Weibullian-type radius in Lemma \ref{Lemma: type I}.

\begin{lemma}\label{Lemma: type I}
Let $\vk Z$ be an elliptical random vector defined in Lemma \ref{Lemma: RtoZ} and $\vk e^*, \lambda, I, J$ be defined w.r.t. $\Sigma=A^\top A$ in $\mathscr{P}\left(\Sigma^{-1}\right)$ in Lemma \ref{Lemma: quadratic}.  Let $\Y$ be a Gaussian random vector in $\R^d$ with covariance
matrix $\Sigma$, and $\vk c$ be a constant in $\R^d$, then we have the asymptotic expansion $(t \to \infty)$,
\begin{eqnarray*}
\pk{\vk Z > t \vk{1} + \vk c} &=& \left(1+o(1)\right) \Upsilon (\lambda t^2+ 2t \vk c_I^\top \Sigma_{II}^{-1} \vk 1_I)^{\beta_{\gamma,I,J}} \\
&& \times t^{-|I|} \exp\left(-L(\lambda t^2+ 2t \vk c_I^\top \Sigma_{II}^{-1} \vk 1_I)^{\gamma/2}\right),
\end{eqnarray*}
where 
\begin{eqnarray*}
\Upsilon &:=& \Upsilon (\Sigma) = C{(\gamma L)^{1+|J|/2-d}}\frac{\Gamma(d/2) 2^{d/2-1} \pk{\vk Y_J> \widetilde{\vk{u}}'_J | \vk Y_I = \vk 0_I}}{(2\pi)^{|I|/2} |\Sigma_{II}|^{1/2} \prod_{i \in I} h_i },\\
h_i&:=& h_i (\Sigma) =\vk {1}_I^{\top}\Sigma_{II}^{-1} \vk e_i, \\
\lambda &:=& \lambda (\Sigma)= \vk 1_I^\top \Sigma_{II}^{-1} \vk 1_I = \min\limits_{\vk x \ge \vk 1}\vk x^\top \Sigma^{-1} \vk x,\\
\beta_{\gamma,I,J} &:=& \beta_{\gamma,I,J} (\Sigma)=\frac{\beta+|I| + \gamma(1+|J|/2-d)}{2}, \ |I|+|J| = d,\\
\widetilde{\vk{u}}'_J &:=& \widetilde{\vk{u}}'_J (\Sigma) = \lim_{t \to \infty}t^{\gamma/2} \left(\vk{1}_J-\Sigma_{JI}\Sigma^{-1}_{II} \vk{1}_I \right),
\end{eqnarray*}
and set $\pk{\vk Y_J> \widetilde{\vk{u}}'_J | \vk Y_I = \vk 0_I} = 1$ if $|I|=d$. The elements in $\widetilde{\vk{u}}'_J$ take values in $\{-\infty,0\}$, as $\Sigma_{J I}\left(\Sigma_{II}\right)^{-1} \vk 1_I = \vk e_J^*  \geq \vk 1_J$.
\end{lemma}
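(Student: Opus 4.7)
\textbf{Proof proposal for Lemma \ref{Lemma: type I}.} The plan is to adapt \citet[Theorem 3.1]{Hashorva2007}, which gives the asymptotic of $\pk{\vk Z > \vk v}$ for Gumbel-MDA elliptical random vectors as $\vk v \to \infty$, to the specific threshold $\vk v = t\vk 1 + \vk c$ with a fixed perturbation $\vk c \in \R^d$. The first step is to identify the leading-order behaviour of the quadratic programming problem with shifted constraints. By continuity and Lemma \ref{Lemma: quadratic}, the minimiser of $\vk x^\top \Sigma^{-1}\vk x$ on $\{\vk x \ge t\vk 1 + \vk c\}$ is close to $t\vk e^* + \vk d$, where for $i \in I$ the active constraint gives $d_i = c_i$, and for $j \in J$ the entries $d_j$ are determined by the KKT conditions so that the gradient condition $\Sigma^{-1}(t\vk e^* + \vk d)$ remains supported on $I$. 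A direct computation using the identity $\vk d^\top \Sigma^{-1}\vk e^* = \vk d_I^\top \Sigma_{II}^{-1}\vk 1_I$ from Lemma \ref{Lemma: quadratic} then yields
\begin{eqnarray*}
\min_{\vk x \ge t\vk 1 + \vk c}\vk x^\top \Sigma^{-1}\vk x \;=\; \lambda t^2 + 2t\,\vk c_I^\top \Sigma_{II}^{-1}\vk 1_I + O(1), \qquad t \to \infty,
\end{eqnarray*}
which is precisely the quantity appearing inside the exponential and the polynomial factor in the statement.

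Next, I would invoke the elliptical density $f_{\vk Z}(\vk z) \propto g(\vk z^\top \Sigma^{-1}\vk z)$, where $g$ is determined by the Weibullian tail $\pk{R>u}\sim Cu^\beta\expon{-Lu^\gamma}$ together with the Jacobian from the radial decomposition. The function $g(s)$ inherits a tail of the form $s^{(\beta-\gamma+1)/2 - d/2}\expon{-Ls^{\gamma/2}}$ (up to constants). Because $g$ decays super-polynomially, a Laplace-type argument shows that the integral $\pk{\vk Z > t\vk 1 + \vk c} = \int_{\{\vk z > t\vk 1 + \vk c\}} f_{\vk Z}(\vk z)\,\d\vk z$ is dominated by mass concentrated near the minimiser $\vk x^* \approx t\vk e^* + \vk d$. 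A local change of variables $\vk z = \vk x^* + \vk w/t$ (with appropriate scaling in the $I$- and $J$-directions) reduces the integral to a product of an $|I|$-dimensional exponential integral (contributing the $t^{-|I|}$ factor and the exponential with exponent $-L(\lambda t^2 + 2t\vk c_I^\top \Sigma_{II}^{-1}\vk 1_I)^{\gamma/2}$), and a $|J|$-dimensional Gaussian integral. The latter, after identifying the conditional covariance $\Sigma_{JJ} - \Sigma_{JI}\Sigma_{II}^{-1}\Sigma_{IJ}$ in the quadratic expansion near $\vk x^*$, produces exactly the factor $\pk{\vk Y_J > \widetilde{\vk u}'_J \mid \vk Y_I = \vk 0_I}$ in $\Upsilon$.

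Step three is bookkeeping: collecting the constants from $g$, from the Jacobian $|\Sigma_{II}|^{-1/2}$ of the $I$-coordinates and from the normalising constant of the conditional Gaussian on $J$, and verifying that the resulting polynomial exponent on $(\lambda t^2 + 2t\vk c_I^\top \Sigma_{II}^{-1}\vk 1_I)$ matches $\beta_{\gamma,I,J} = (\beta + |I| + \gamma(1+|J|/2 - d))/2$. The expression $\widetilde{\vk u}'_J = \lim_{t\to\infty} t^{\gamma/2}(\vk 1_J - \Sigma_{JI}\Sigma_{II}^{-1}\vk 1_I)$ appears as the scaled slack in the inactive constraints after the Laplace scaling by $t^{\gamma/2}$; since each entry of $\vk 1_J - \Sigma_{JI}\Sigma_{II}^{-1}\vk 1_I = \vk 1_J - \vk e_J^*$ is non-positive (by $\vk e_J^* \ge \vk 1_J$), the limits belong to $\{-\infty,0\}$, as required.

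The main obstacle, as in the Gaussian case of \cite{Das2023heavy}, is the careful Laplace/saddle-point computation in Step two: one must justify interchange of limit and integral uniformly in a neighbourhood of $\vk x^*$, control the contribution of $\vk z$ far from the minimiser (which is negligible because of the super-polynomial decay of $g$), and track all polynomial correction factors through the change of variables. The existence of active/inactive constraints forces the analysis to be carried out separately on the $I$- and $J$-coordinates with different scaling, and the coupling between them is what produces the conditional Gaussian factor. Once the dominant balance $\lambda t^2 + 2t\vk c_I^\top \Sigma_{II}^{-1}\vk 1_I$ is isolated, the rest of the argument follows the template of \citet[Theorem 3.1]{Hashorva2007} with the Weibullian parameters $(C,\beta,L,\gamma)$ substituted directly.
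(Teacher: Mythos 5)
Your proposal takes essentially the same route as the paper: both arguments are a direct application of \citet[Theorem 3.1]{Hashorva2007} with the threshold $\vk t_n = t\vk 1 + \vk c$ and the Weibullian tail $\pk{R>u}\sim Cu^\beta\exp(-Lu^\gamma)$, identifying $\alpha_t^2 = (t\vk 1+\vk c)_I^\top\Sigma_{II}^{-1}(t\vk 1+\vk c)_I = \lambda t^2 + 2t\vk c_I^\top\Sigma_{II}^{-1}\vk 1_I + O(1)$ and the scaling function $\beta_t = \gamma L\alpha_t^{\gamma-1}$, after which the stated constants follow by bookkeeping. The paper simply cites the theorem and substitutes; your outline additionally sketches the Laplace/saddle-point argument underlying that theorem, which is more work than needed but reaches the same place.
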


\begin{proof}
Applying Theorem 3.1 in \cite{Hashorva2007} with threshold $\vk t_n = t\vk 1+\vk c$ and $\pk {R>\alpha_t} \sim C\alpha_t^\beta \exp(-L\alpha_t^\gamma)$, we have
\begin{eqnarray*}
\pk{\vk Z > t \vk{1} + \vk c} &=& \left(1+o(1)\right) \frac{\Gamma(d/2) 2^{d/2-1} \pk{\vk Y_J> \widetilde{\vk{u}}'_J | \vk Y_I = \vk 0_I}}{(2\pi)^{|I|/2} |\Sigma_{II}|^{1/2} \prod_{i \in I} \vk {t}_I^{'\top}\Sigma_{II}^{-1} \vk e_i } \\
&& \times (\alpha_t \beta_t)^{1+|J|/2-d} C\alpha_t^\beta \exp(-L\alpha_t^\gamma),
\end{eqnarray*}
where
\begin{eqnarray*}
\alpha_t^2 &=& \cl{\norm{(t \vk{1} + \vk {c})_I}^2} = (t \vk{1} + \vk c)_I^\top \Sigma_{II}^{-1} (t \vk{1} + \vk c)_I\\
&=& t^2 \vk 1_I^\top \Sigma_{II}^{-1} \vk 1_I + 2t \vk c_I^\top \Sigma_{II}^{-1} \vk 1_I + \vk c_I^\top \Sigma_{II}^{-1} \vk c_I,\\
\beta_t &=& w (\alpha_t) = \gamma L\alpha_t^{\gamma-1}, \quad \vk {t}'_I = \frac{(t \vk{1} + \vk {c})_I}{\alpha_t},\\
\widetilde{\vk{u}}'_J &=& \lim_{t \to \infty} \fracl{\beta_t}{\alpha_t}^{1/2} \left((t \vk{1} + \vk {c})_J-\Sigma_{JI}\Sigma^{-1}_{II}(t \vk{1} + \vk {c})_I \right),\\
 &=& \lim_{t \to \infty}t^{\gamma/2} \left(\vk{1}_J-\Sigma_{JI}\Sigma^{-1}_{II} \vk{1}_I \right).
\end{eqnarray*}
We complete the proof using the fact $ \vk 1_I^\top \Sigma_{II}^{-1} \vk 1_I=\lambda$.
\end{proof}


Next, we replace $t$ by $u(t)$ and $\vk c$ by $\vk x^{(t)}$ in Lemma \ref{Lemma: type I} and proceed to show Proposition \ref{Proposition: type I} accordingly.

\begin{proof} [Proof of Proposition \ref{Proposition: type I}]
Recall that for constant $\vk z \in \R^d$, $u(t) \to \infty$, $\log \mathcal{L}(t) / (u(t))^{\gamma-1} \to 0$, and 
\begin{eqnarray*}
\vk x^{(t)} := \frac{\vk {z}}{(u(t))^{\gamma-1}} + \frac{\log \mathcal{L}(t)}{(u(t))^{\gamma-1}}\vk{1}+o\fracl{1}{(u(t))^{\gamma-1}},
\end{eqnarray*}
we have $\lim _{t \rightarrow \infty} \vk x^{(t)}=\mathbf{0}$. Analogously following the proof \citet[Theorem 3.1]{Hashorva2007}, 
we 
replace $t$ by $u(t)$ and $\vk c$ by $\vk x^{(t)}$ in Lemma \ref{Lemma: type I} to obtain the result. Thus,

\begin{eqnarray*}
\pk{\vk Z > u(t) \vk{1} + \vk  x^{(t)}} &=& (1+o(1)) C{(\gamma L)^{1+|J|/2-d}}\frac{\Gamma(d/2) 2^{d/2-1} \pk{\vk Y_J> \widetilde{\vk{u}}_J | \vk Y_I = \vk 0_I}}{(2\pi)^{|I|/2} |\Sigma_{II}|^{1/2} \prod_{i \in I} h_i } \\
&&\times (\lambda u^2(t)+ 2u(t) \vk  x^{(t)\top} \Sigma^{-1} \vk e^*)^{\beta_{\gamma,I,J}} (u(t))^{-|I|}\\
&&\times \exp\left(-L(\lambda u^2(t)+ 2u(t) \vk  x^{(t)\top} \Sigma^{-1} \vk e^*)^{\gamma/2}\right),
\end{eqnarray*}
where
\begin{eqnarray*}
\vk e^*_I&=&\vk 1_I,  \quad \vk e^*_J= \Sigma_{JI}(\Sigma_{II})^{-1}\vk 1_I \ge \vk 1_J,\\
\widetilde{\vk{u}}_J &=&\lim_{t \to \infty}  (\gamma L\norm{(u(t) \vk{1} + \vk { x^{(t)}})_I}^{\gamma-2})^{1/2} \left((u(t) \vk{1} + \vk { x^{(t)}})_J-\Sigma_{JI}\Sigma^{-1}_{II}(u(t) \vk{1} + \vk { x^{(t)}})_I \right).\\
&=& \lim_{t \to \infty} u(t)( \vk{1}_J-\Sigma_{JI}\Sigma^{-1}_{II}\vk{1}_I ).
\end{eqnarray*}
Note that $\widetilde{\vk{u}}_J=\widetilde{\vk{u}}'_J$ (defined in Lemma \ref{Lemma: type I}) due to the fact that $u(t) \to \infty$, and $\vk  x^{(t)\top} \Sigma^{-1} \vk e^*={\vk  x^{(t)\top}_I}\Sigma_{II}^{-1} \vk 1_I$ holds in Lemma \ref{Lemma: quadratic}. 
\end{proof}

Now, we are ready to show the tail asymptotics in Theorem \ref{Thm: tail asymptotics} using Lemmas \ref{Lemma: threshold},  \ref{Lemma: type I} and Proposition \ref{Proposition: type I}.

\begin{proof}[Proof of Theorem \ref{Thm: tail asymptotics}]
Recall that for  $A_{\vk x_S}=\left\{\vk {y} \in \R_{+}^d: y_j>x_j, \forall j \in S\right\}$ with index set $S \subseteq \{1, \ldots, d\}$, we have 
\begin{eqnarray*}
\pk{\vk X \in tA_{\vk x_S}} = \pk{X_j > tx_j, \forall j\in S}= \pk{Z_j > tz_{x_j,c_j}(t), \forall j\in S },
\end{eqnarray*}
where the elliptical random vector $\vk Z=(Z_1, \ldots,Z_d)$ has the same copula as $\vk X$. The threshold $z_{x_j,c_j}(t)=\overline G_1^\leftarrow(\overline F_j(tx_j))$ for $j\in S $ where $\overline F_j(t)=\cl{(1+o(1))}c_j(t^\alpha\ell(t))^{-1} = c_j (t^\alpha \ell(t)(1+o(1)))^{-1}\in \RV_{-\alpha}$ with $c_j, \alpha >0$. Defining 
$\vk z_S := \left(\log \left[(c_j^{-1}x_j^\alpha C' \alpha^{\beta'/\gamma} L^{-\beta'/\gamma})^{1/\gamma L}\right]\right)_{j\in S }$ and  $\mathcal{L}(t):= \cl{(1+o(1))}\left(\ell(t) (\log t)^{{\beta'/\gamma}}\right)^{1/\gamma L}$, where all the parameters are defined in Eq.\eqref{Z_j}. \cl{It follows by Lemma \ref{Lemma: threshold} that} 
\begin{eqnarray}\label{Eq: tail expansion}
&&\pk{\vk X \in tA_{\vk x_S}} \notag\\
&&= \pk{\vk Z_S > \fracl{\alpha\log t}{L}^{1/\gamma} \vk{1} + \frac{\vk z_S}{ \left( \frac{\alpha \log t}{L} \right)^{1-1/\gamma}} + \frac{\log \mathcal{L}(t)}{ \left( \frac{\alpha \log t}{L} \right)^{1-1/\gamma}}\vk{1} +o\fracl{1}{(\log t)^{1-1/\gamma}}}\notag\\
&&=: \pk{\vk Z_S > u(t) \vk{1} + \frac{\vk {z_S}}{(u(t))^{\gamma-1}} + \frac{\log \mathcal{L}(t)}{(u(t))^{\gamma-1}}\vk{1} +o\fracl{1}{(u(t))^{\gamma-1}}},
\end{eqnarray}
where $u(t) = \left((\alpha/L) \log t\right)^{1/\gamma}$.  Under the assumption of $\log \ell(t) = o\left((\log t)^{1-1/\gamma}\right)$, we have $u(t) \to \infty$ and $\log \mathcal{L}(t) / (u(t))^{\gamma-1} \to 0$. 

\kai{Define the following arguments w.r.t $\Sigma_S$ (rows and columns of $\Sigma$ with indices in $S$) as in Lemma \ref{Lemma: type I}}  
\begin{eqnarray*}
&&I_S:=I(\Sigma_S), \quad J_S:=J(\Sigma_S), \quad e^*_S:=e^*(\Sigma_S), \quad \Upsilon_S:=\Upsilon(\Sigma_S),\\
&&\beta^*:=\beta_{\gamma,I_S,J_S}=\frac{\beta+|I_S| + \gamma(1+|J_S|/2-|S|)}{2}, \quad |I_S|+|J_S| = |S|, \\
&&h_j^S:=h_j\left(\Sigma_S\right), \quad \lambda_S:=\lambda(\Sigma_S)= \min\limits_{\vk x_S \ge \vk 1_S}\vk x_S^\top \Sigma_S^{-1} \vk x_S= \vk 1_{I_S}^\top \Sigma_{I_S}^{-1} \vk 1_{I_S}, 
\end{eqnarray*} 
and applying Proposition \ref{Proposition: type I},
\begin{eqnarray*}
\pk{\vk X \in tA_{\vk x_S}} &\sim& \Upsilon_S (\lambda_S u^2(t)+ 2(u(t))^{2-\gamma} (\vk z_S+\log \mathcal{L}(t)\vk 1_S)^\top \Sigma_S^{-1} \vk e_S^*)^{\beta^*}\\
&& \times (u(t))^{-|I_S|}\exp\left(-L(\lambda_S u^2(t)+ 2(u(t))^{2-\gamma} (\vk z_S+\log \mathcal{L}(t)\vk 1_S)^\top \Sigma_S^{-1} \vk e_S^*)^{\gamma/2}\right).
\end{eqnarray*}
For $\gamma > 0$, by the Taylor expansion of
\begin{eqnarray*}
&&(\lambda_S u^2(t)+ 2(u(t))^{2-\gamma} (\vk z_S+\log \mathcal{L}(t)\vk 1_S)^\top \Sigma_S^{-1} \vk e_S^*)^{\beta^*} \\
&& \quad \quad \sim (\lambda_Su^2(t))^{\beta^*}\left( 1 + \frac{2\beta^*}{\lambda_S u^\gamma(t)} (\vk z_S+\log \mathcal{L}(t)\vk 1_S)^\top \Sigma_S^{-1} \vk e_S^*)\right),
\end{eqnarray*}
we have
\begin{eqnarray*}
\pk{\vk X \in tA_{\vk x_S}}
&\sim& \Upsilon_S (\lambda_Su^2(t))^{\beta^*}\left( 1 + \frac{2\beta^*}{\lambda_S u^\gamma(t)} (\vk z_S+\log \mathcal{L}(t)\vk 1_S)^\top \Sigma_S^{-1} \vk e_S^*)\right)\\
&& \times (u(t))^{-|I_S|}\exp\left(-L \left(\lambda_S^{\gamma/2} u^{\gamma}(t) +\gamma\lambda_S^{\gamma/2-1}(\vk z_S+\log \mathcal{L}(t)\vk 1_S)^\top \Sigma_S^{-1} \vk e_S^*   \right)\right)\\
&=& \Upsilon_S \lambda_S^{\beta^*} (u(t))^{2\beta^*}\left( 1 + \frac{2\beta^*}{\lambda_S u^\gamma(t)} (\vk z_S+\log \mathcal{L}(t)\vk 1_S)^\top \Sigma_S^{-1} \vk e_S^*)\right)\\
&& \times (u(t))^{-|I_S|} \mathcal{L}(t)^{-L\gamma \lambda_S^{\gamma/2}} \ t^{-\alpha\lambda_S^{\gamma/2}}\\
&& \times \prod_{j \in I_S}\exp\left(- h_j^S L\gamma \lambda_S^{\gamma/2-1} \log [(c_j^{-1}x_j^\alpha C' \alpha^{\beta'/\gamma} L^{-\beta'/\gamma})^{1/\gamma L}]  \right)\\
&=& \Upsilon_S \lambda_S^{\beta^*} \fracl{\alpha \log t}{L}^{\frac{2\beta^*}{\gamma}}\\
&& \times \left(1  +\frac{2\beta^*\sum_{j \in I_S} h_j^S \log (c_j^{-1}x_j^\alpha C' \alpha^{\frac{\beta'}{\gamma}} L^{-\frac{\beta'}{\gamma}}) }{\alpha \gamma \lambda_S \log t} + \frac{2\beta^* \log \left(\ell(t)(\log t)^{\frac{\beta'}{\gamma}}\right) }{\alpha \gamma \log t} \right)\\
&& \times \fracl{\alpha}{L}^{-\frac{|I_S|}{\gamma}} (\log t)^{-\frac{|I_S|}{\gamma}-\frac{\beta'}{\gamma}\lambda_S^{\gamma/2}} (t^\alpha \ell(t))^{-\lambda_S^{\gamma/2}}\\
&& \times (C' \alpha^{\beta'/\gamma} L^{-\beta'/\gamma})^{-\lambda_S^{\gamma/2}} \prod_{j \in I_S} \left(\frac{x_j^\alpha}{c_j}\right)^{- h_j^S\lambda_S^{\gamma/2-1}}. 
\end{eqnarray*}
Dropping the second and third terms in
\begin{eqnarray*}
1  +\frac{2\beta^*\sum_{j \in I_S} h_j^S \log (c_j^{-1}x_j^\alpha C' \alpha^{\frac{\beta'}{\gamma}} L^{-\frac{\beta'}{\gamma}}) }{\alpha \gamma \lambda_S \log t} + \frac{2\beta^* \log \left(\ell(t)(\log t)^{\frac{\beta'}{\gamma}}\right) }{\alpha \gamma \log t} ,
\end{eqnarray*}
we have
\begin{eqnarray*}
\pk{\vk X \in tA_{\vk x_S}}&\sim& \Upsilon_S \lambda_S^{\beta_{\gamma,I_S,J_S}} \fracl{\alpha}{L}^{-\frac{|I_S|-2\beta_{\gamma,I_S,J_S}}{\gamma}} (\log t)^{-\frac{|I_S|}{\gamma}-\frac{\beta'}{\gamma}\lambda_S^{\gamma/2}+ \frac{2\beta_{\gamma,I_S,J_S}}{\gamma}} \\
&& \times (t^\alpha \ell(t))^{-\lambda_S^{\gamma/2}} (C' \alpha^{\beta'/\gamma} L^{-\beta'/\gamma})^{-\lambda_S^{\gamma/2}} \prod_{j \in I_S} (c_j^{-1}x_j^\alpha)^{- h_j^S\lambda_S^{\gamma/2-1}}   \\
&=& \Upsilon_S \lambda_S^{\beta_{\gamma,I_S,J_S}} \fracl{\alpha}{L}^{\frac{\beta}{\gamma}+1-\frac{|I_S|}{2}-\frac{|S|}{2}} (\log t)^{\frac{\beta}{\gamma}+1-\frac{|I_S|}{2}-\frac{|S|}{2}-\frac{\beta'}{\gamma}\lambda_S^{\gamma/2}} \\
&& \times (t^\alpha \ell(t))^{-\lambda_S^{\gamma/2}} (C' \alpha^{\beta'/\gamma} L^{-\beta'/\gamma})^{-\lambda_S^{\gamma/2}} \prod_{j \in I_S} (c_j^{-1}x_j^\alpha)^{- h_j^S\lambda_S^{\gamma/2-1}}.
\end{eqnarray*}
\cl{Consequently, Theorem \ref{Thm: tail asymptotics} is obtained. }
\end{proof}
\kai{Next, we will present the proof of Theorem \ref{Thm: MRV} by utilizing Theorem \ref{Thm: tail asymptotics}.}
\begin{proof}[Proof of Theorem \ref{Thm: MRV}]
For any $\vk x \in \R_+^d$, $\nu_1\left(\partial [\vk 0, \vk x]^c \right)=0$ always hold. By the inclusion-exclusion principle and tail equivalence (with index $\alpha_1$) for all margins of $\vk X$,
\begin{eqnarray*}
\lim\limits_{t \to \infty} t\pk{\frac{\vk X}{b_1(t)}\in [\vk 0, \vk x]^c} &\le& \sum\limits_{j=1}^d \lim\limits_{t \to \infty} t \pk{X_j > b_1(t)x_j} =\sum\limits_{j=1}^d x_j^{-\alpha_1},\\
\lim\limits_{t \to \infty} t\pk{\frac{\vk X}{b_1(t)}\in [\vk 0, \vk x]^c} &\ge& \sum\limits_{j=1}^d \lim\limits_{t \to \infty} t \pk{X_j > b_1(t)x_j} \\
&&-\sum\limits_{j,k=1; j\ne k}^d \lim\limits_{t \to \infty} t \pk{X_j > b_1(t)x_j, X_k > b_1(t)x_k}.
\end{eqnarray*}
a) Since $\pk{X_j > b_1(t)x_j, X_k > b_1(t)x_k}=o(t^{-1})$ for covariance $|\rho_{jk}|<1$ in matrix $\Sigma=AA^\top$, according to tail independence property for the type I elliptical copula \citep{Schmidt2002, FRAHM2003}, the risk $\vk X \in \MRV(\alpha_1,b_1,\nu_1,\EE_d^{(1)})$ and
\begin{eqnarray*}
\lim\limits_{t \to \infty} t\pk{\frac{\vk X}{b_1(t)}\in [\vk 0, \vk x]^c} =\sum\limits_{j=1}^d x_j^{-\alpha_1}=\nu_1\left([\vk 0, \vk x]^c\right).
\end{eqnarray*}

b) For $2\le i \le d$, by Lemma \ref{Lemma: cone}, it suffices to show 
\begin{eqnarray*}
\lim\limits_{t \to \infty} t\pk{\frac{\vk X}{b_i(t)}\in A_{\vk x_S}}= \lim\limits_{t \to \infty} b_i^{\leftarrow}(t)\pk{\vk X \in tA_{\vk x_S}} = \nu_i(A_{\vk x_S}).
\end{eqnarray*}
Using Theorem \ref{Thm: tail asymptotics}, we have the asymptotic tail probability
\begin{eqnarray*}
\pk{\vk X \in tA_{\vk x_S}}&\sim& \Upsilon_S \lambda_S^{\beta_{\gamma,I_S,J_S}} \fracl{\alpha}{L}^{\frac{\beta}{\gamma}+1-\frac{|I_S|}{2}-\frac{|S|}{2}} (\log t)^{\frac{\beta}{\gamma}+1-\frac{|I_S|}{2}-\frac{|S|}{2}-\frac{\beta'}{\gamma}\lambda_S^{\gamma/2}} \\
&& \times (b^{\leftarrow}(t))^{-\lambda_S^{\gamma/2}} (C' \alpha^{\beta'/\gamma} L^{-\beta'/\gamma})^{-\lambda_S^{\gamma/2}} \prod_{j \in I_S} (c_j^{-1}x_j^\alpha)^{- h_j^S\lambda_S^{\gamma/2-1}}   \\
&\stackrel{\eqref{Z_j}}\sim& \Upsilon_S \lambda_S^{\beta_{\gamma,I_S,J_S}} \fracl{\alpha}{L}^{\frac{\beta'}{\gamma}+\frac{1}{2}-\frac{|I_S|}{2}} (\log t)^{-\frac{|I_S|}{2}+\frac{1}{2} + \frac{\beta'}{\gamma}(1-\lambda_S^{\gamma/2})} \\
&& \times (b^{\leftarrow}(t))^{-\lambda_S^{\gamma/2}} (C' \alpha^{\beta'/\gamma} L^{-\beta'/\gamma})^{-\lambda_S^{\gamma/2}} \prod_{j \in I_S} (c_j^{-1}x_j^\alpha)^{- h_j^S\lambda_S^{\gamma/2-1}} \\
&\sim& \Upsilon_S \lambda_S^{\beta_{\gamma,I_S,J_S}} C^{'-\lambda_S^{\gamma/2}} \fracl{\alpha\log t}{L}^{-\frac{|I_S|}{2}+\frac{1}{2} + \frac{\beta'}{\gamma}(1-\lambda_S^{\gamma/2})} \\
&& \times (b^{\leftarrow}(t))^{-\lambda_S^{\gamma/2}}  \prod_{j \in I_S} (c_j^{-1}x_j^\alpha)^{- h_j^S\lambda_S^{\gamma/2-1}}. 
\end{eqnarray*}
Note that
\begin{eqnarray*}
\mathcal{S}_i &:=& \left\{ S \subset \mathbb{I}: |S|\ge i, \vk 1_{I_S}^{\top} \Sigma_{I_S}^{-1}\vk 1_{I_S}= \min\limits_{\widetilde{S} \subset \mathbb{I},|\widetilde{S}| \ge i } \vk 1_{I_{\widetilde{S}}}^{\top} \Sigma^{-1}_{I_{\widetilde{S}}} \vk 1_{I_{\widetilde{S}}} \right\},\\
I_i &:=& \arg\min\limits_{S \in \mathcal{S}_i} |I_S|,
\end{eqnarray*}
implying that, if $S \notin \mathcal{S}_i$, then $\lambda_i < \lambda_S$, while if $S \in \mathcal{S}_i$, then  $|I_i| \le |I_S|$.
Hence if $S \notin \mathcal{S}_i$, then $\lambda_i^{\gamma/2} < \lambda_S^{\gamma/2}$ ($\gamma>0$), and
\begin{eqnarray*}
b_i^{\leftarrow}(t)\pk{\vk X \in tA_{\vk x_S}} &\sim& C^{'\lambda_i^{\gamma/2}-\lambda_S^{\gamma/2}} \Upsilon_S \lambda_S^{\beta_{\gamma,I_S,J_S}} \prod_{j \in I_S} (c_j^{-1}x_j^\alpha)^{- h_j^S\lambda_S^{\gamma/2-1}}  \\
&& \times \kai{(t^\alpha \ell(t))^{\lambda_i^{\gamma/2}-\lambda_S^{\gamma/2}}} \fracl{\alpha\log t}{L}^{\frac{|I_i|-|I_S|}{2}+\frac{\beta'}{\gamma}(\lambda_i^{\gamma/2}-\lambda_S^{\gamma/2})},
\end{eqnarray*}
where $(t^\alpha \ell(t))^{\lambda_i^{\gamma/2}-\lambda_S^{\gamma/2}}$ dominates the tail behaviour and $g_i(t)=b_i^{\leftarrow}(t)\pk{\vk X \in tA_{\vk x_S}} \to 0$, i.e., $g_i(t)\in \RV_{-a}$ with $a=\alpha(\lambda_S^{\gamma/2}-\lambda_i^{\gamma/2})>0$.

If $S \in \mathcal{S}_i$ and $|I_i| < |I_S|$, then $\lambda_i^{\gamma/2} = \lambda_S^{\gamma/2}$, and
\begin{eqnarray*}
g_i(t)=b_i^{\leftarrow}(t)\pk{\vk X \in tA_{\vk x_S}} &\sim& \Upsilon_S \lambda_S^{\beta_{\gamma,I_S,J_S}} \prod_{j \in I_S} (c_j^{-1}x_j^\alpha)^{- h_j^S\lambda_S^{\gamma/2-1}} \fracl{\alpha\log t}{L}^{\frac{|I_i|-|I_S|}{2}}\to 0.
\end{eqnarray*}

If $S \in \mathcal{S}_i$ and $|I_i| = |I_S|$, then $\lambda_i^{\gamma/2} = \lambda_S^{\gamma/2}$, and
\begin{eqnarray*}
\lim\limits_{t \to \infty} b_i^{\leftarrow}(t)\pk{\vk X \in tA_{\vk x_S}} &=& \Upsilon_S \lambda_S^{\beta_{\gamma,I_S,J_S}} \prod_{j \in I_S} (c_j^{-1}x_j^\alpha)^{- h_j^S\lambda_S^{\gamma/2-1}} = \nu_i(A_{\vk x_S}) .
\end{eqnarray*}
Consequently, the claim in Theorem \ref{Thm: MRV} follows.
\end{proof}

The Mill's ratio for Weibullian type radius is derived analogously from the traditional method for Gaussian distribution.
\begin{proof}[Proof of Lemma \ref{Lemma: MR for R}]
Recalling the density $h(r)= Cr^{d-1}\exp\left(-{{r}^{2\kappa}}/{2}\right),\, r>0, \kappa>0$, we have
\begin{eqnarray*}
  \int_{r}^{\infty} h(u) du \le \int_{r}^{\infty} \frac{u^{2\kappa-d}}{r^{2\kappa-d}}Cu^{d-1}\exp\left(-\frac{{u}^{2\kappa}}{2}\right)du =  -\frac{C}{\kappa r^{2\kappa-d}}\exp\left(-\frac{{u}^{2\kappa}}{2}\right)\bigg|_{r}^{\infty}= \frac{h(r)}{\kappa r^{2\kappa-1}}.
\end{eqnarray*}
Noting further that $h'(r)=((d-1)r^{-1}-\kappa r^{2\kappa-1})h(r)$, we have
\begin{eqnarray*}
  \left(\kappa+\frac{2\kappa}{r^{2\kappa}} \right)\int_{r}^{\infty} h(u)du &\ge& \int_{r}^{\infty} \left(\kappa+\frac{2\kappa}{u^{2\kappa}} \right) h(u)du \\
  &\ge& \int_{r}^{\infty} \left(\kappa+\frac{2\kappa-d}{u^{2\kappa}} \right) h(u)du\\
  &=& \int_{r}^{\infty} \frac{(\kappa u^{2\kappa-1}-(d-1)u^{-1})h(u) u^{2\kappa-1}}{u^{4\kappa-2}}+\frac{(2\kappa-1) u^{2\kappa-2}}{u^{4\kappa-2}} h(u) du\\
  &=& -\frac{h(u)}{u^{2\kappa-1}}\bigg|_{r}^{\infty}= \frac{h(r)}{r^{2\kappa-1}}.
\end{eqnarray*}
Thus,
\begin{eqnarray*}
   \int_{r}^{\infty} h(u)du \ge \frac{r}{\kappa(r^{2\kappa}+2)} h(r),
\end{eqnarray*}
which completes the proof.
\end{proof}

\section{Declaration}
\subsection*{Declaration of competing risk}
The authors declare that they have no known competing financial interests.

\subsection*{Acknowledgements}
This work was partially supported by the Research Development Fund [RDF1912017], and the Post-graduate Research Fund [PGRS2112022], Xi'an Jiaotong-Liverpool University. 
We thank Prof. Enkelejd Hashorva for his insightful suggestions and comments for this work.

\bibliographystyle{abbrvnat}
\bibliography{reference} 

\end{document}